\newtheorem{theorem}{Th\'eor\`eme}[section]
\newtheorem{proposition}[theorem]{Proposition}
\newtheorem{lemma}[theorem]{Lemme}
\newtheorem{defprop}[theorem]{D\'efinition-Proposition}
\newtheorem{Corollaire}[theorem]{Corollaire}
\newtheorem{definition}[theorem]{D\'efinition}
\newtheorem{remarque}[theorem]{Remarque}
\newcommand{\vc}{\|\cdot\|}
\newcommand{\C}{\mathbb{C}}
\newcommand{\Z}{\mathbb{Z}}
\newcommand{\s}{\mathbb{S}^1}
\newcommand{\lra}{\longrightarrow}
\newcommand{\al}{\alpha}
\newcommand{\la}{\lambda}
\newcommand{\R}{\mathbb{R}}
\newcommand{\cl}{\mathcal{C}^\infty}
\newcommand{\p}{\mathbb{P}}
\newcommand{\eps}{\varepsilon}
\newcommand{\vf}{\varphi}
\newcommand{\si}{\sigma}
\newcommand{\N}{\mathbb{N}}
\newcommand{\z}{\overline{z}}
\newcommand{\pt}{\partial}
\newcommand{\dif}{\frac{\pt}{\pt z}}
\title{Spectre du Laplacien  singulier associ\'e aux m\'etriques canoniques sur $\p^1$}
\date{\today, \currenttime}
\author{Mounir Hajli}
\begin{document}

\maketitle

\begin{abstract}
Nous  construisons
 un op\'erateur diff\'erentiel associ\'e \`a une classe de m\'etriques singuli\`eres sur les fibr\'es en droites sur
$\p^1$. Cet op\'erateur
\'etend la notion du Laplacien classique (voir, par exemple  \cite{heat}). Nous \'etablissons
 qu'il admet un  spectre positif, discret et infini,
 qu'on calcule  explicitement.

  Ce calcul sera une application d'une   th\'eorie des s\'eries de Fourier-Bessel g\'en\'eralis\'ee que nous
 d\'eveloppons dans ce texte.
\end{abstract}

\begin{center}
  \Large{Introduction}
\end{center}

Soit $X$ une vari\'et\'e k\"ahl\'erienne compacte munie une forme de K\"ahler  $\omega_X$
de classe $\cl$ et
 $\overline{E}=(E,h_E)$ un fibr\'e vectoriel holomorphe muni de $h_E$,
 une m\'etrique  hermitienne de classe $\cl$.  \`A cette donn\'ee on attache un op\'erateur diff\'erentiel
 $\Delta_{\overline{E}}^q$ appel\'e l'op\'erateur
 Laplacien
 agissant sur l'espace des $(0,q)$-formes diff\'erentielles de classe $\cl$ \`a coefficients dans
  $E$ pour $q=0,1,\ldots,n$. Alors,  il est bien connu que cet op\'erateur admet un spectre positif, infini et
  discret.\\

 Le calcul explicite du spectre est un prob\`eme int\'eressant.
  Il y a peu de cas pour les quels on sait calculer explicitement le spectre de $\Delta_{\overline{E}}^\ast$.

   Lorsque
  $X=\p^n$ est muni de la m\'etrique de Fubini-Study et $\overline{E}$ est le fibr\'e en droites hermitien
  trivial,  Ikeda et Taniguchi calculent explicitement
   le spectre du Laplacien  $\Delta^q_{\overline{E}}$, voir \cite[th\'eor\`eme 5.2]{Spectra}.
    Ces calculs ont \'et\'e utilis\'es par Gillet et Soul\'e afin
   de calculer explicitement la torsion analytique holomorphe correspondante et par suite \'etablir
   un th\'eor\`eme de Riemann-Roch arithm\'etique, voir \cite[th\'eor\`eme 2.1.1 et  \S 2.3.2]{GSZ}.  L'id\'ee d'Ikeda
   et Taniguchi  consiste \`a utiliser la compatibilit\'e de la structure de $\p^n$
   ,vu comme vari\'et\'e homog\`ene, avec la m\'etrique de Fubini-Study et d'appliquer ensuite
    la th\'eorie des repr\'esentations.
   Plus concr\`etement, si
    $X=G/K$ est une vari\'et\'e riemannienne homog\`ene telle que $G$ agit transitivement sur $X$, et
   $g$ une m\'etrique riemannienne $G$-invariante sur $X$.
   Alors, $\Delta$ le Laplacien associ\'e  est $G$-invariant, et donc ses espaces propres sont des $G$-modules.
   D'apr\`es Ikeda
et Taniguchi, le calcul du spectre de $\Delta$
se ram\`ene donc \`a un probl\`eme de la th\'eorie des representations.\\

\begin{center}
  \Large{Motivation}
\end{center}

Dans ce texte, nous nous int\'eressons  \`a une classe de m\'etriques  singuli\`eres sur les fibr\'es en droites  sur $\p^1$,
l'espace projectif complexe de dimension $1$,  appel\'ees les m\'etriques canoniques. Ce sont des m\'etriques continues mais pas $\cl$ en g\'en\'eral, d\'etermin\'ees uniquement par la structure combinatoire de $\p^1$, vu comme vari\'et\'e torique.  Pour tout $m\in \N^\ast$, la m\'etrique canonique sur $\mathcal{O}(m)$ est donn\'ee comme suit:

\begin{equation}\label{defcanonique}
 h_{{}_{\overline{\mathcal{O}(m)}_\infty}}(s,s)(x)=\frac{|s(x)|^2}{\max(|x_0|,|x_1|)^{2m}}
\end{equation}
o\`u $x=[x_0:x_1]\in \p^1$ et $s$ est une section locale holomorphe de $\mathcal{O}(m)$ autour de $x$. Comme
 $T\p^1\simeq \mathcal{O}(2)$, alors on peut munir $\p^1$ de la forme de K\"ahler singuli\`ere suivante:
\[
\omega_\infty:=\frac{i}{2\pi}\frac{dz\wedge d\z}{\max(1,|z|)^4},
\]
et on note par $h_{{}_{\overline{T\p^1}_\infty}}$ la m\'etrique sur $T\p^1$ correspondante.\\

 \`A priori, il  n'est pas clair  si la th\'eorie classique du Laplacien  peut \^etre \'etendue
  \`a ce genre
   de m\'etriques.\\

   Le but de cet article est double: Nous montrons qu'on peut associer aux m\'etriques canoniques sur $\p^1$ une
th\'eorie d'op\'erateurs singuliers g\'en\'eralisant formellement la th\'eorie classique du Laplacien, et nous
\'etablissons ensuite que ces op\'erateurs poss\'edent des propri\'et\'es analogues au cas classique.

Plus
concr\`etement,  nous montrons   qu'on peut  associer \`a $\omega_\infty$ et \`a $h_{{}_{\overline{\mathcal{O}(m)}_\infty}}$ un op\'erateur Laplacien singulier que nous noterons
    par $\Delta_{\overline{\mathcal{O}(m)}_\infty}$ qui \'etend la notion classique
    du Laplacien. Nous \'etablissons qu'il   poss\`ede  un spectre, not\'e
    $\mathrm{Spec}(\Delta_{\overline{\mathcal{O}(m)}_\infty})$, que nous calculons explicitement.    On notera,
     comme
 dans le cas classique,
  que  $\mathrm{Spec}(\Delta_{\overline{\mathcal{O}(m)}_\infty})$ est discret, infini et positif.
\begin{remarque}
\rm{ Nous utiliserons  ces calculs  afin de  d\'efinir une fonction Z\^eta associ\'ee \`a $\Delta_{\overline{\mathcal{O}(m)}_\infty}
 $ et \`a son \'etude (voir
  \cite{Mounir2}), aussi pour d\'efinir une torsion analytique holomorphe canonique associ\'ee aux m\'etriques canoniques, avec applications \`a la
 g\'eom\'etrie
 d'Arakelov  (voir \cite{Mounir3}).}
 \end{remarque}

   \begin{center}
     \Large{R\'esultats et organisation de l'article}
   \end{center}

 Soit $(\cdot,\cdot)_{L^2,\infty}$
le produit hermitien sur $A^{0,0}(\p^1,\mathcal{O}(m))$ associ\'e  \`a   $\omega_\infty$ et \`a $h_{{\overline{\mathcal{O}(m)}}_\infty}$(voir $\S$\ref{x30}). On note par $\mathcal{H}^{(m)}$ la compl\'etion de cet espace par
 $(\cdot,\cdot)_{L^2,\infty}$.  Comme les m\'etriques sont singuli\`eres alors la construction classique
du Laplacien (voir rappel \S \ref{x29}) n'est plus valable ici.

 Malgr\'e cela, nous construisons (voir \S\ref{x30})
 un op\'erateur diff\'erentiel singulier $\Delta_{\overline{\mathcal{O}(m)}_\infty}$, en associant \`a tout
\'el\'ement de $A^{0,0}(\p^1,\mathcal{O}(m))$ de la forme $f\otimes z^k$, l'\'el\'ement
$\Delta_{\overline{\mathcal{O}(m)}_\infty}(f\otimes
z^k)$  d\'efini sur $\p^1\setminus \s$ comme suit:

\[
\Delta_{\overline{\mathcal{O}(m)}_\infty}(f\otimes
z^k)=-h_{{}_{\overline{T\p^1}_\infty}}\bigl(\dif,\dif\bigr)^{-1}h_{{}_{\overline{\mathcal{O}(m)}_\infty}}\bigl(
z^k,z^k\bigr)^{-1}\frac{\pt}{\pt z}\Bigl(h_{{}_{\overline{\mathcal{O}(m)}_\infty}}(z^k,z^k)\frac{\pt f}{\pt \z}
\Bigr)\otimes z^k.
\]
Observons que lorsque les m\'etriques sont $\cl$, alors on v\'erifie que le Laplacien poss\`ede cette expression
locale.
Nous v\'erifions ensuite que  d\'efinition s'\'etend par lin\'earit\'e pour tout \'el\'ement $\xi \in
A^{0,0}(\p^1,\mathcal{O}(m))$. Contrairement au cas classique, on n'a pas
$\Delta_{\overline{\mathcal{O}(m)}_\infty}\bigl(A^{0,0}(\p^1,\mathcal{O}(m))\bigr)= A^{0,0}(\p^1,\mathcal{O}(m))$, cela
est d\^u bien \'evidemment au caract\`re singulier des m\'etriques. Mais nous
\'etablissons dans le th\'eor\`eme ci-dessous que $\Delta_{\overline{\mathcal{O}(m)}_\infty}$ est \`a valeurs dans $\mathcal{H}^{(m)}$, et qu'il poss\`ede quelques propri\'et\'es analogues au cas classique:

\begin{defprop}[voir proposition \ref{x31}]
On a $\Delta_{\overline{\mathcal{O}(m)}_\infty}:A^{(0,0)}(\p^1,\mathcal{O}(m))\lra \mathcal{H}^{(m)}$
est un op\'erateur lin\'eaire, on l'appelle le Laplacien canonique associ\'e \`a $\omega_\infty$ et \`a  $h_{\overline{\mathcal{O}(m)}_\infty}$. Soient $\xi,\eta\in A^{(0,0)}(\p^1,\mathcal{O}(m))$, on a:
\begin{enumerate}
\item \[
\ker \Delta_{\overline{\mathcal{O}(m)}_\infty}= H^0(\p^1,\mathcal{O}(m)).\]
\item
\[
\bigl(\xi,\Delta_{\overline{\mathcal{O}(m)}_\infty}\eta\bigr)_{L^2,\infty}=
\bigl(\Delta_{\overline{\mathcal{O}(m)}_\infty}\xi,\eta\bigr)_{L^2,\infty}
\]
\item
\[
\bigl(\xi,\Delta_{\overline{\mathcal{O}(m)}_\infty}\xi\bigr)_{L^2,\infty}\geq
0.
\]
\end{enumerate}

\end{defprop}

Comme dans le cas classique, $\Delta_{{\overline{\mathcal{O}(m)}}_\infty}$ admet une extension maximale auto-adjointe
et positive:
\begin{theorem}[voir th\'eor\`eme \ref{z14}]
Pour tout $m\in \N$, l'op\'erateur $\Delta_{{\overline{\mathcal{O}(m)}}_\infty}$ admet une extension maximale
auto-adjointe et
positive.
\end{theorem}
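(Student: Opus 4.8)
The plan is to realize $\Delta:=\Delta_{\overline{\mathcal{O}(m)}_\infty}$ as a densely defined, symmetric, nonnegative operator on the Hilbert space $\mathcal{H}^{(m)}$, and then to produce the desired extension by the Friedrichs construction, which furnishes precisely the maximal positive self-adjoint extension in the order induced on quadratic forms. First I would note that, $\mathcal{H}^{(m)}$ being by definition the completion of $A^{(0,0)}(\p^1,\mathcal{O}(m))$ for $(\cdot,\cdot)_{L^2,\infty}$, the domain $A^{(0,0)}(\p^1,\mathcal{O}(m))$ is dense; and that by the proposition above the restriction of $\Delta$ to this domain is symmetric and satisfies $(\xi,\Delta\xi)_{L^2,\infty}\geq 0$. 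Hence $\Delta$ is a densely defined symmetric nonnegative operator, which is exactly the hypothesis needed to invoke the abstract extension theory.

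To this operator I attach the sesquilinear form $Q(\xi,\eta):=(\Delta\xi,\eta)_{L^2,\infty}$ on $A^{(0,0)}(\p^1,\mathcal{O}(m))$, which is symmetric and nonnegative. Since $\ker\Delta=H^0(\p^1,\mathcal{O}(m))$ is nontrivial, $Q$ is only semidefinite, so I would shift by the identity and work with the genuine scalar product $Q_1:=Q+(\cdot,\cdot)_{L^2,\infty}$. Completing $A^{(0,0)}(\p^1,\mathcal{O}(m))$ for the norm $\xi\mapsto Q_1(\xi,\xi)^{1/2}$ produces a Hilbert space $\mathcal{V}$ equipped with a natural map $\mathcal{V}\to\mathcal{H}^{(m)}$ extending the inclusion of the dense domain.

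For a densely defined symmetric operator bounded below, closability of the associated form is automatic, so $\mathcal{V}\to\mathcal{H}^{(m)}$ is injective and $\mathcal{V}$ is a bona fide form domain. The genuine difficulty in the present singular situation is not this abstract closability but the symmetry that underlies it, namely that the integrations by parts defining $\Delta$ produce no residual boundary contributions along the singular circle $\s=\{|z|=1\}$. This is exactly what the preceding proposition secures, and I regard it as the main obstacle; granting it, everything else is formal. A secondary point requiring care is the concrete description of $\mathcal{V}$, since the later Fourier--Bessel computation of the spectrum is carried out on this space.

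Finally I would apply Kato's first representation theorem to the closed nonnegative form $\overline{Q}$ with form domain $\mathcal{V}$: it delivers a unique self-adjoint operator $\Delta_F\geq 0$ on $\mathcal{H}^{(m)}$ with $(\Delta_F\xi,\eta)_{L^2,\infty}=\overline{Q}(\xi,\eta)$ for $\xi$ in its domain and all $\eta\in\mathcal{V}$, and $\Delta_F$ extends $\Delta$. Being self-adjoint, $\Delta_F$ is maximal among symmetric extensions; being the Friedrichs extension, it dominates in the form order every other nonnegative self-adjoint extension. It is therefore the sought maximal, positive, self-adjoint extension of $\Delta_{\overline{\mathcal{O}(m)}_\infty}$.
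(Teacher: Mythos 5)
Your proof is correct, but it follows a genuinely different route from the paper. You invoke the soft machinery of quadratic forms: density of $A^{(0,0)}(\p^1,\mathcal{O}(m))$ in $\mathcal{H}^{(m)}$, symmetry and positivity from the proposition, automatic closability of the form of a semibounded symmetric operator, and then Kato's representation theorem to obtain the Friedrichs extension, which is indeed self-adjoint, positive, and maximal both as a symmetric extension and in the form order among nonnegative self-adjoint extensions. The paper instead proves the statement \emph{constructively}: it first establishes (th\'eor\`eme \ref{x16}, itself resting on the generalized Fourier--Bessel theory of \S\ref{z9}) that $\{1\otimes z^k\}\cup\{\vf_{n,\la}^{(m)}\}$ is a Hilbert basis of weak eigenvectors, checks via th\'eor\`eme \ref{laplacevect1} that $A^{0,0}(\p^1,\mathcal{O}(m))$ lies in the natural domain $H_2$, and then applies the elementary lemme \ref{extensionMaximale}, which builds the maximal self-adjoint extension by hand as the diagonal operator $\sum a_k\phi_k\mapsto\sum\la_k a_k\phi_k$. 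Your argument is logically much lighter --- it needs only proposition \ref{x31} and would give existence before any spectral information is available --- whereas the paper's argument, at the cost of depending on its hardest analytic result, delivers in the same stroke the explicit description of $\mathrm{Dom}(\Delta_{\overline{\mathcal{O}(m)}_\infty})$ as $\{\xi:\sum\frac{\la^4}{16}|(\xi,\vf_{n,\la}^{(m)})|^2<\infty\}$, which is what the full statement of th\'eor\`eme \ref{z14} asserts and what the subsequent spectral and zeta-function analysis actually uses. One small caveat if you wanted to recover that extra information: the Friedrichs extension need not a priori coincide with the paper's diagonal extension without an additional argument (e.g.\ that the eigenvectors lie in the form domain, or essential self-adjointness), though for the bare existence claim you were asked to prove this is immaterial.
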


La suite de l'article est d\'edi\'ee \`a l'\'etude des propri\'et\'es spectrales de
$\Delta_{\overline{\mathcal{O}(m)}_\infty}$. Bien \'evidemment la th\'eorie classique du Laplacien n'est plus
applicable ici. \\

Soient $n\in \Z$ et  $m\in \N$. Nous introduisons  la fonction $L_{m,n}$
d\'efinie sur $\C^\ast$ comme suit:
\[
L_{m,n}(z)=-z^m\frac{d}{dz}\bigl(z^{-m} J_n(z)J_{n-m}(z)\bigr) \footnote{ $J_n$ d\'esigne la fonction de Bessel d'ordre $n$.}
\]
et on note par $\mathcal{Z}_{m,n}$ l'ensemble des z\'eros de $L_{m,n}$.  Nous verrons  que $L_{m,n}$ et $\mathcal{Z}_{m,n}$
  jouent un r\^ole fondamental dans l'\'etude de l'op\'erateur $\Delta_{\overline{\mathcal{O}(m)}_\infty}$.
   Notre principal r\'esultat de cet article
s'\'enonce comme suit:
\begin{theorem}[voir th\'eor\`eme \ref{x333}]
Pour tout $m\in\N$, $\Delta_{\overline{\mathcal{O}(m)}_\infty}$ admet un spectre discret, positif et infini, et on a
{{} \[
\mathrm{Spec}(\Delta_{\overline{\mathcal{O}(m)}_\infty})=\Bigl\{0\Bigr\}\bigcup \Bigl\{\frac{\la^2}{4} \Bigl|\, \exists n\in \N,\, \la\in \mathcal{Z}_{m,n}  \Bigr\},
\]}
En plus, on a
\begin{enumerate}
\item Lorsque  $m$ est pair, alors  la multiplicit\'e de $\frac{\la^2}{4}$ est \'egale  \`a $2$ si $\la\in
    \mathcal{Z}_{m,n}$ si $n\geq m+1 $ ou $0\leq n\leq \frac{m}{2}-1$,
     et de multiplicit\'e $1$ si $\la\in \mathcal{Z}_{m,\frac{m}{2}}$.
\item Si $m$ est impair, alors $\frac{\la^2}{4}$ est de multiplicit\'e $2$ si $n\geq m+1$, et vaut
$1$ si $0\leq n\leq m $.
\end{enumerate}\end{theorem}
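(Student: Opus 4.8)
The plan is to exploit the $\s$-invariance of both $\omega_\infty$ and $h_{\overline{\mathcal{O}(m)}_\infty}$ to separate variables, turning the eigenvalue equation into a family of radial Sturm--Liouville problems indexed by the angular frequency $N\in\Z$, and then to read the spectrum off the Bessel solutions of those problems. First I would work in the chart $U_0=\{x_0\neq 0\}$ with coordinate $z=re^{i\theta}$ and represent a section $\xi=f\otimes z^k$ by its frame function $g=fz^{k}$. A direct computation shows that $(\cdot,\cdot)_{L^2,\infty}$ becomes $\frac1\pi\int |g|^2\rho^{-2m-4}\,r\,dr\,d\theta$ with $\rho=\max(1,r)$, and that on $\{r\neq 1\}$ the equation $\Delta_{\overline{\mathcal{O}(m)}_\infty}\xi=\tfrac{\la^2}{4}\xi$ reads, in terms of $g$, as the Helmholtz equation $\pt_z\pt_{\z}g+\tfrac{\la^2}{4}g=0$ on the disk $\{r<1\}$ and as its $z\mapsto 1/z$ conjugate on $\{r>1\}$. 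Since the coefficients depend only on $r$, I would expand $g=\sum_{N\in\Z}g_N(r)e^{iN\theta}$; the operator preserves each mode, so $\mathcal{H}^{(m)}=\widehat{\bigoplus}_{N\in\Z}\mathcal{H}^{(m)}_N$ and it suffices to analyse the radial operator on each $\mathcal{H}^{(m)}_N$.

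On a fixed mode the radial equation is, in each region, a Bessel equation: the solution regular at $r=0$ is $J_{|N|}(\la r)$, and---after the substitution $s=1/r$ reflecting the toric symmetry $z\mapsto 1/z$ and compactifying the point at infinity---the solution regular at $r=\infty$ is $r^{m}J_{|N-m|}(\la/r)$. These two pieces must be glued at the singular circle $\s=\{r=1\}$ by the transmission conditions characterising the domain of the self-adjoint extension of Th\'eor\`eme \ref{z14}: continuity of $g$, and continuity of the normal flux $\rho^{-2m}\bigl(\pt_r-\tfrac Nr\bigr)g_N$, both obtained by integrating the Dirichlet form by parts across $\s$ and cancelling the boundary term produced by the jump of the metric. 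Writing the two conditions as a homogeneous $2\times2$ system in the coefficients of the inner and outer solutions, a nonzero solution exists iff the determinant vanishes, which reduces (using $(PQ)'=P'Q+PQ'$) to $m\,J_{|N|}(\la)J_{|N-m|}(\la)-\la\tfrac{d}{d\la}\bigl(J_{|N|}(\la)J_{|N-m|}(\la)\bigr)=0$; in view of $J_{-p}=(-1)^pJ_p$ this is exactly $\la\,L_{m,N}(\la)=0$. Hence the nonzero eigenvalues in mode $N$ are the $\tfrac{\la^2}{4}$ with $\la\in\mathcal{Z}_{m,N}$, positivity (item~3 of the D\'efinition--Proposition) forcing $\la$ real; while $\la=0$ produces a solution regular at both ends precisely when $0\le N\le m$, giving the monomials $z^{N}$ that span $\ker\Delta_{\overline{\mathcal{O}(m)}_\infty}=H^0(\p^1,\mathcal{O}(m))$.

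To conclude I would assemble the spectrum as $\{0\}\cup\bigcup_{N\in\Z}\{\tfrac{\la^2}{4}\,:\,\la\in\mathcal{Z}_{m,N}\}$. Discreteness and completeness follow from the generalized Fourier--Bessel theory developed earlier in the text: each radial operator has compact resolvent---the weight is integrable and, after the $s=1/r$ compactification, the problem lives on a finite interval with a regular interface at $\s$---so its spectrum is an infinite discrete sequence whose eigenfunctions are complete in $\mathcal{H}^{(m)}_N$; moreover the least positive zero of $L_{m,N}$ grows like $|N|$ (Bessel asymptotics), so only finitely many modes contribute below any bound and the union is discrete and infinite. The multiplicities are then obtained by counting, for a fixed value $\tfrac{\la^2}{4}$, the frequencies $N\in\Z$ producing it: since the determinant depends only on the unordered pair $\{|N|,|N-m|\}$, one has $\mathcal{Z}_{m,N}=\mathcal{Z}_{m,m-N}$, so a generic eigenvalue is hit by the two distinct frequencies $N$ and $m-N$ and has multiplicity $2$, the sole exception being the self-dual frequency $N=\tfrac m2$ (only for $m$ even), hit once, of multiplicity $1$. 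Re-indexing by the representatives $n\ge 0$ used in the statement---keeping $0\le n\le\tfrac m2-1$ and $n\ge m+1$ for $m$ even, and the full range $0\le n\le m$ together with $n\ge m+1$ for $m$ odd---yields the two cases listed.

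The main obstacle is precisely the completeness assertion: that the glued Bessel eigenfunctions actually span $\mathcal{H}^{(m)}$, equivalently the spectral resolution of the singular radial operator across $\s$. This is the content of the generalized Fourier--Bessel expansion theorem invoked above; its two delicate points are the correct identification of the self-adjoint domain through the transmission conditions at $\s$ (where the metric is merely continuous, not $\cl$), and the compactness of the resolvent in spite of the interface, which is what excludes any residual continuous spectrum and legitimises the mode-by-mode computation.
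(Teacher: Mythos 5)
Your overall route is the one the paper takes: decompose $\mathcal{H}^{(m)}$ into angular Fourier modes (this is the content of the remark \ref{z12} linking $\mathcal{H}^{(m)}$ to the spaces $\mathcal{E}_m$), solve the radial problem by $J_{n}(\la r)$ inside and $r^mJ_{n-m}(\la/r)$ outside, glue at $\s$ by continuity of the function and of $d^c$ across the circle, and observe that the $2\times 2$ gluing condition is exactly $L_{m,n}(\la)=0$ --- this is precisely the computation in the proof of Th\'eor\`eme \ref{laplacevect1}, including the equivalence between $\la\in\mathcal{Z}_{m,n}$ and the vanishing of the boundary terms coming from the Green formula. For exhaustion of the spectrum you correctly identify the crux as the completeness of the glued Bessel family and defer it to the generalized Fourier--Bessel theory; that is exactly Th\'eor\`emes \ref{x16} and \ref{x17}, proved by the Watson-type contour-integral argument of \S\ref{z9}. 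Two caveats on the analytic side. The paper never argues via compactness of the resolvent: discreteness and the absence of further spectrum follow purely from the fact that the explicit eigenvectors form a base hilbertienne, so your compact-resolvent claim is an extra, unsubstantiated assertion (the interface $\s$ is precisely where the metric is only continuous), though it is redundant once the completeness theorem is granted. Likewise, reality of the $\la$'s is obtained in the paper not from positivity of the quadratic form but from the orthogonality computation $(\vf_{n,\la}^{(m)},\vf_{n,\overline{\la}}^{(m)})_{L^2,\infty}=0$ confronted with the manifest positivity of that same integral (point 4 of Th\'eor\`eme \ref{vecteurpropreL}).

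The genuine gap is in the multiplicity count. Your argument --- each eigenvalue is reached by the two frequencies $n$ and $m-n$, which give linearly independent (indeed orthogonal, since they live in the distinct Fourier modes $e^{in\theta}$ and $e^{i(m-n)\theta}$) eigenvectors except when $n=m-n$ --- yields multiplicity $2$ for every $\la\in\mathcal{Z}_{m,n}$ with $n\neq \frac m2$. For $m$ odd and $0\le n\le m$ one always has $n\neq m-n$ with $m-n\in\{0,\dots,m\}$, so your own counting gives $2$ there, whereas the clause you are proving asserts multiplicity $1$. You state that re-indexing ``yields the two cases listed'' without reconciling this; as written, your proof does not establish point 2 of the theorem. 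The discrepancy should be flagged rather than absorbed into the re-indexing, all the more so because the only dependence criterion available (in your sketch and in the proof of Th\'eor\`eme \ref{laplacevect1} alike) is that $\vf_{n,\la}^{(m)}$ and $\vf_{m-n,\la}^{(m)}$ are proportional exactly when $m$ is even and $n=\frac m2$, which leaves no mechanism for the eigenspace to drop to dimension $1$ in the odd case.
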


Le cas $m$=$0$, c-\`a-d l'\'etude du $\Delta_{\overline{\mathcal{O}}_\infty}$, constitue une situation relativement
 facile \`a trait\'ee. En effet, nous d\'emontrons que  le r\'esultat ci-dessus pour $m=0$ r\'esulte
 du th\'eor\`eme \ref{x15}  dont la preuve est
une cons\'equence de la th\'eorie classique des s\'eries
de Fourier Bessel, (voir \cite[\S 18]{Watson}, ou le paragraphe \ref{z10} pour un bref rappel sur cette th\'eorie). En
d'autres termes, nous avons montr\'e que
l'\'etude des propri\'et\'es spectrales de   $\Delta_{\overline{\mathcal{O}}_\infty}$ est
\'equivalente \`a la th\'eorie des s\'eries de Fourier-Bessel classique.\\

\noindent\textbf{Id\'ee de la preuve:}  Nous \'etablissons   que l'\'etude de l'op\'erateur $\Delta_{\overline{\mathcal{O}(m)}_\infty}$
   est \'equivalente \`a une th\'eorie
 des s\'eries de Fourier-Bessel g\'en\'eralis\'ee que nous allons d\'evelopper dans cet article.\\

\noindent\textbf{Strat\'egie  de la preuve:} La preuve du th\'eor\`eme ci-dessus sera une cons\'equence
 d'une suite de r\'esultats. Ces derniers sont r\'epartis en deux sections \S \ref{D1} et \S \ref{V1}. \\

  Le but de la section \ref{D1} consiste \`a construire explicitement
une famille de vecteurs propres pour $\Delta_{\overline{\mathcal{O}(m)}_\infty}$. Nous commen\c{c}ons tout
d'abord
par \'etudier les fonctions $L_{m,n}$ et les ensembles $\mathcal{Z}_{m,n}$ (voir lemme \ref{zerosimple}). Apr\`es,
nous associons  \`a tout $\la\in \mathcal{Z}_{m,n}$ un \'el\'ement
$\vf_{n,\la}^{(m)}\in\mathcal{H}^{(m)}$ (voir \ref{z2}), et
nous \'etablissons que la famille
$\{\vf_{n,\la}^{(m)}|\,n\in \Z, \la\in \mathcal{Z}_{m,n}\}$ est orthogonale par
rapport \`a $(,)_{L^2,\infty}$,  nous d\'eduisons alors que $\mathcal{Z}_{m,n}$ est un sous-ensemble discret
 de $\R$\footnote{Notons que lorsque $m=0$, on a $\mathcal{Z}_{0,n}=\{\la\in \C| J_n(\la)J_n'(\la)=0\}$. Il est
 bien connu
que les z\'eros des fonctions de Bessel d'ordre entier et leurs d\'eriv\'ees sont r\'eels. Notre r\'esultat
\ref{vecteurpropreL} 4. peut \^etre vu comme une g\'en\'eralisation: Les z\'eros de $L_{m,n}$ sont r\'eels.} (voir
th\'eor\`eme \ref{vecteurpropreL}). Le r\'esultat majeur de cette section est le th\'eor\`eme suivant,
  dans lequel on montre que les fonctions $L_{m,n}$ g\'en\`erent des vecteurs propres pour le Laplacien $\Delta_{\overline{\mathcal{O}(m)}_\infty}$:
\begin{theorem}[voir th\'eor\`eme \ref{laplacevect1}]
Soit $m\in \N$, on a pour tout $n\in \Z$,  $\la\in \mathcal{Z}_{m,n}$ et $k\in \{0,1,\ldots,m\}$:

\[
\bigl(1\otimes z^k, \Delta_{\overline{\mathcal{O}(m)}_\infty} \xi\bigr)_{L^2,\infty}=0,
\]
\[
\bigl(\vf_{n,\la}^{(m)}, \Delta_{\overline{\mathcal{O}(m)}_\infty} \xi\bigr)_{L^2,\infty}=\frac{\la^2}{4}\bigl(\vf_{n,\la}^{(m)}, \xi\bigr)_{L^2,\infty},
\]
et
\[
\bigl(\vf_{{}_{-n+m,\la}}^{(m)}, \Delta_{\overline{\mathcal{O}(m)}_\infty} \xi\bigr)_{L^2,\infty}=\frac{\la^2}{4}\bigl(\vf_{{}_{-n+m,\la}}^{(m)}, \xi\bigr)_{L^2,\infty},
\]
pour tout $\xi\in A^{(0,0)}(\p^1,\mathcal{O}(m))$. En particulier,
 \[
\Bigl\{0\Bigr\}\bigcup \Bigl\{\frac{\la^2}{4} \Bigl|\, \exists n\in \N,\, \la\in \mathcal{Z}_{m,n}  \Bigr\}\subset \mathrm{Spec}(\Delta_{\overline{\mathcal{O}(m)}_\infty}).
\]

\end{theorem}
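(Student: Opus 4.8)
The plan is to dispatch the three identities in order of increasing difficulty and to reduce everything, via the $\s$-invariance of the canonical metrics, to a one-dimensional Bessel computation. The first identity is almost free: taking $f=1$ in the defining formula gives $\Delta_{\overline{\mathcal{O}(m)}_\infty}(1\otimes z^k)=0$ for $0\le k\le m$, since $\partial f/\partial\z=0$ (the $1\otimes z^k$ are precisely the holomorphic sections spanning $H^0(\p^1,\mathcal{O}(m))$). As these lie in $A^{(0,0)}(\p^1,\mathcal{O}(m))$, the symmetry of proposition \ref{x31}(2) yields at once $(1\otimes z^k,\Delta_{\overline{\mathcal{O}(m)}_\infty}\xi)_{L^2,\infty}=(\Delta_{\overline{\mathcal{O}(m)}_\infty}(1\otimes z^k),\xi)_{L^2,\infty}=0$. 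The substance of the theorem is the two eigenvector identities, and here the essential obstruction is that $\vf_{n,\la}^{(m)}$ is only piecewise smooth — smooth on $\{|z|<1\}$ and on $\{|z|>1\}$, merely continuous across $\s$ — so it is not an element of $A^{(0,0)}(\p^1,\mathcal{O}(m))$ and proposition \ref{x31}(2) cannot be invoked for it.

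For these I would compute the pairing by hand. Since $\Delta_{\overline{\mathcal{O}(m)}_\infty}$ commutes with the rotation action and distinct Fourier modes are $L^2$-orthogonal, it suffices to test against $\xi=f\otimes z^k$ with $f=u(r)e^{ij\theta}$ a single mode, writing $z=re^{i\theta}$. Using $\omega_\infty=\pi^{-1}\max(1,r)^{-4}\,r\,dr\,d\theta$ and unwinding the definition of the operator, the factors $h_{{}_{\overline{T\p^1}_\infty}}(\dif,\dif)^{-1}$ and $h_{{}_{\overline{\mathcal{O}(m)}_\infty}}(z^k,z^k)^{-1}$ cancel against those in the volume form and the fibre metric, and the pairing collapses to $-\pi^{-1}\iint\phi\,\overline{\partial_z(\rho_k\,\partial_{\z}f)}\,r\,dr\,d\theta$ over $\{r<1\}\cup\{r>1\}$, where $\phi$ is the radial profile of $\vf_{n,\la}^{(m)}$ and $\rho_k=h_{{}_{\overline{\mathcal{O}(m)}_\infty}}(z^k,z^k)$. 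I would then integrate by parts twice on each region to move the operator onto $\phi$. By construction (definition \ref{z2}), $\vf_{n,\la}^{(m)}$ satisfies $\Delta_{\overline{\mathcal{O}(m)}_\infty}\vf_{n,\la}^{(m)}=\tfrac{\la^2}{4}\vf_{n,\la}^{(m)}$ pointwise on each region, $\phi$ being, up to an elementary power of $r$, the Bessel function $J_n(\la r)$ for $r<1$ (regular at the origin) and $J_{n-m}(\la/r)$ for $r>1$ (regular at infinity); hence the interior part of the integral reproduces exactly $\tfrac{\la^2}{4}(\vf_{n,\la}^{(m)},\xi)_{L^2,\infty}$.

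The hard part is the boundary contribution on $\s$ created by the two integrations by parts. The first, proportional to $\oint_{\s}\phi\,\rho_k\,\partial_z\bar f\,d\z$, vanishes because $\phi$, $\rho_k$ and the derivatives of the smooth section $\xi$ are all continuous across $\s$. The second is proportional to $\oint_{\s}\rho_k\,\partial_{\z}\phi\,\bar f$, and it does \emph{not} cancel a priori since $\partial_r\phi$ jumps across $\s$; requiring it to vanish for every test $\xi$ is equivalent to continuity of the flux $\rho_k\,\partial_{\z}\phi$ across $\s$ in each Fourier mode. This is the crux. Continuity of $\phi$ at $r=1$ fixes the ratio of the two Bessel amplitudes, and since $\rho_k$ is continuous there, the vanishing of the jump of the flux reduces to matching the radial derivatives; after the standard recurrences for $J_n'$ and $J_{n-m}'$ this becomes precisely $\tfrac{m}{\la}J_n(\la)J_{n-m}(\la)-J_n'(\la)J_{n-m}(\la)-J_n(\la)J_{n-m}'(\la)=L_{m,n}(\la)=0$, which holds exactly because $\la\in\mathcal{Z}_{m,n}$. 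In other words, the transcendental condition $L_{m,n}(\la)=0$ is nothing but the self-adjoint matching condition at the singular circle, and it is what turns $\vf_{n,\la}^{(m)}$, despite its kink, into a genuine weak eigenvector.

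For the identity with $\vf_{-n+m,\la}^{(m)}$ I would run the same argument in the chart $x_1\neq0$, coordinate $w=1/z$: the construction is symmetric under $z\leftrightarrow 1/z$, which interchanges $\{r<1\}$ and $\{r>1\}$ and sends $(n,k)$ to $(m-n,m-k)$, while a direct Bessel computation gives $L_{m,m-n}=(-1)^m L_{m,n}$, hence $\mathcal{Z}_{m,m-n}=\mathcal{Z}_{m,n}$; the same $\la$ is therefore admissible and $\vf_{-n+m,\la}^{(m)}$ is a second weak eigenvector for the eigenvalue $\tfrac{\la^2}{4}$ (this is the origin of the multiplicity $2$ in the main theorem). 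Finally, each $\vf_{n,\la}^{(m)}$ is a nonzero element of $\h^{(m)}$ satisfying the weak eigen-identity just proved, so $\tfrac{\la^2}{4}$ lies in the spectrum of the self-adjoint extension of theorem \ref{z14}; together with $0\in\mathrm{Spec}(\Delta_{\overline{\mathcal{O}(m)}_\infty})$, which comes from $\ker\Delta_{\overline{\mathcal{O}(m)}_\infty}=H^0(\p^1,\mathcal{O}(m))$ in proposition \ref{x31}(1), this gives the announced inclusion.
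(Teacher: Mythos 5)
Your proposal is correct and follows essentially the same route as the paper: $\Delta_{\overline{\mathcal{O}(m)}_\infty}(1\otimes z^k)=0$ plus the symmetry of proposition \ref{x31} for the first identity, then Green's formula on $\{|z|<1\}$ and $\{|z|>1\}$ separately, with the interior terms yielding $\tfrac{\la^2}{4}$ because the radial profiles solve the Bessel equation off $\s$, and the boundary terms vanishing exactly when the derivative-matching condition $L_{m,n}(\la)=0$ holds; the third identity via $\mathcal{Z}_{m,-n+m}=\mathcal{Z}_{m,n}$ is also the paper's argument. The only (harmless) differences are that you prove just the forward implication where the paper establishes the equivalence with $\la\in\mathcal{Z}_{m,n}$, and that you test against a single Fourier mode rather than a general $g\otimes z^l$.
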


Nous terminons cette section par une formule reliant la norme $L^2$ de $\vf_{n,\la}^{(m)}$ et la d\'eriv\'ee
de $L_{m,n}$ en $\la$ (voir th\'eor\`eme \ref{deriveeLnorme}). Ce r\'esultat est crucial pour la suite (voir page
31).\\

La section \ref{V1} constitue
 le noyau  de ce travail, \`a savoir que
 l'op\'erateur $\Delta_{\overline{\mathcal{O}(m)}_\infty}$ admet un spectre discret, positif et
 infini, et qu'il est compl\'etment d\'etermin\'e par les ensembles $\mathcal{Z}_{m,n}, \, n\in \Z$. Plus
 concr\'etement, nous \'etabissons dans cette section le th\'eor\`eme:
 \begin{theorem}[cf. Th\'eor\`eme \ref{x333}]
Pour tout $m\in\N$, $\Delta_{\overline{\mathcal{O}(m)}_\infty}$ admet un spectre discret, positif et infini, et on a
{{} \[
\mathrm{Spec}(\Delta_{\overline{\mathcal{O}(m)}_\infty})=\Bigl\{0\Bigr\}\bigcup \Bigl\{\frac{\la^2}{4} \Bigl|\, \exists n\in \N,\, \la\in \mathcal{Z}_{m,n}  \Bigr\},
\]}
En plus, on a
\begin{enumerate}
\item Lorsque  $m$ est pair, alors  la multiplicit\'e de $\frac{\la^2}{4}$ est \'egale  \`a $2$ si $\la\in
    \mathcal{Z}_{m,n}$ si $n\geq m+1 $ ou $0\leq n\leq \frac{m}{2}-1$,
     et de multiplicit\'e $1$ si $\la\in \mathcal{Z}_{m,\frac{m}{2}}$.
\item Si $m$ est impair, alors $\frac{\la^2}{4}$ est de multiplicit\'e $2$ si $n\geq m+1$, et vaut
$1$ si $0\leq n\leq m $.
\end{enumerate}
\end{theorem}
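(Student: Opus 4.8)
The plan is to reduce the problem to a family of one-dimensional radial problems by separation of variables, to prove that the eigenvectors already produced exhaust a complete orthogonal system, and finally to count multiplicities. First I would use the toric symmetry. The metrics $\omega_\infty$ and $h_{\overline{\mathcal{O}(m)}_\infty}$ are invariant under the circle action $z\mapsto e^{i\theta}z$ coming from the torus structure of $\p^1$, so $\Delta_{\overline{\mathcal{O}(m)}_\infty}$ commutes with this $\s$-action and preserves its weight spaces. Writing $z=re^{i\theta}$ and decomposing $\mathcal{H}^{(m)}$ into angular weight spaces indexed by $n\in\Z$, the operator acts in each weight as a singular one-dimensional radial operator. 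Since $\max(1,|z|)$ equals $1$ on the disc $r<1$ and $|z|$ on its complement, this radial operator is of Bessel type on each of $(0,1)$ and $(1,\infty)$, and the two pieces are glued along the circle $\s=\{r=1\}$, where the metric has a corner; the function $L_{m,n}$ is exactly the Wronskian-type quantity encoding the transmission condition at $r=1$, which is why its real zeros $\mathcal{Z}_{m,n}$ label the eigenvalues $\la^2/4$. At this stage the inclusion $\{0\}\cup\{\la^2/4\mid\exists n,\ \la\in\mathcal{Z}_{m,n}\}\subset\mathrm{Spec}(\Delta_{\overline{\mathcal{O}(m)}_\infty})$, positivity, and the description of the kernel are already provided by Theorem \ref{laplacevect1}, the D\'efinition-Proposition, and the self-adjoint positive extension of Theorem \ref{z14}.

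The core of the proof, and the main obstacle, is completeness: I would show that the family $\{\vf_{n,\la}^{(m)}\mid n\in\Z,\ \la\in\mathcal{Z}_{m,n}\}$ together with $\ker\Delta_{\overline{\mathcal{O}(m)}_\infty}=H^0(\p^1,\mathcal{O}(m))$ is a complete orthogonal system of $\mathcal{H}^{(m)}$, which is precisely the generalized Fourier-Bessel theory announced in the introduction (reducing to classical Fourier-Bessel completeness, cf. \cite[\S 18]{Watson}, when $m=0$). Structurally, discreteness and completeness follow from the fact that the self-adjoint extension of Theorem \ref{z14} has compact resolvent: the singular metric is merely Lipschitz with a corner along $\s$ yet quasi-isometric to a smooth metric on $\p^1$ (near $z=\infty$ the substitution $w=1/z$ turns $\omega_\infty$ into a smooth non-degenerate form, and $\omega_\infty$ stays within bounded ratio of the Fubini-Study form everywhere), so the form domain embeds compactly into $\mathcal{H}^{(m)}$ by Rellich. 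This already yields a purely discrete spectrum accumulating only at $+\infty$, with finite multiplicities and a complete orthonormal eigenbasis, whence infiniteness is automatic. To identify this eigenbasis I would solve the radial transmission problem in each weight $n$ and check that its only $\mathcal{H}^{(m)}$-eigenfunctions are the $\vf_{n,\la}^{(m)}$ with $\la\in\mathcal{Z}_{m,n}$: regularity at $r=0$ and $r=\infty$ forces the two Bessel solutions $J_n$ and $J_{n-m}$, and the matching condition at $r=1$ is exactly $L_{m,n}(\la)=0$, the simplicity of the zeros (Lemme \ref{zerosimple}) and the norm formula of Theorem \ref{deriveeLnorme} ensuring these are genuine one-dimensional eigenspaces per weight and per zero. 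Completeness then forces $\mathrm{Spec}(\Delta_{\overline{\mathcal{O}(m)}_\infty})\subset\{0\}\cup\{\la^2/4\}$, giving equality. The delicate technical points are the correct self-adjoint realization of the radial operator across the corner at $\s$ and the integrability estimates near $r=0,1,\infty$ making the transmission problem well posed; this is where the argument genuinely departs from the classical $\cl$ theory.

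Finally, for the multiplicities I would count, for each eigenvalue $\la^2/4$, the independent eigenvectors across the weight spaces. The involution $n\mapsto m-n$ pairs the weights: since $J_{-k}=(-1)^kJ_k$ for integer $k$, a short computation gives $L_{m,m-n}=(-1)^mL_{m,n}$, hence $\mathcal{Z}_{m,m-n}=\mathcal{Z}_{m,n}$, and each $\la$ produces the two eigenvectors $\vf_{n,\la}^{(m)}$ and $\vf_{-n+m,\la}^{(m)}$ of Theorem \ref{laplacevect1}. For high weights $n\ge m+1$ these sit in distinct weight spaces and are independent, giving multiplicity $2$ in both parities. The remaining counts come from the overlap range $0\le n\le m$, where the pairing interacts with the $(m+1)$-dimensional kernel $H^0(\p^1,\mathcal{O}(m))$ occupying the weights $0,\dots,m$, and from the fixed point $n=m/2$ of the involution, which exists only for even $m$: there the two eigenvectors coincide, yielding the single multiplicity-$1$ eigenvalue attached to $\mathcal{Z}_{m,m/2}$. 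I expect the parity-dependent sign $(-1)^m$ in $L_{m,m-n}=(-1)^mL_{m,n}$ to be exactly the mechanism separating the two cases in the overlap range, namely independence (multiplicity $2$) for even $m$ on $0\le n\le m/2-1$ versus a forced proportionality (multiplicity $1$) for odd $m$ on $0\le n\le m$, and I would confirm the precise pattern by direct inspection of the explicit $\vf_{n,\la}^{(m)}$.
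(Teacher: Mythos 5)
Your skeleton (angular decomposition under the circle action, the eigenvectors of Th\'eor\`eme \ref{laplacevect1}, completeness as the crux, the involution $n\mapsto m-n$ for multiplicities) matches the paper's, and your reading of $L_{m,n}$ as the transmission condition along $\s$ is exactly what the proof of \ref{laplacevect1} establishes. But your route to completeness is genuinely different from the paper's and, as written, has gaps. The paper never invokes compactness of a resolvent: it proves Th\'eor\`eme \ref{x16} by an explicit generalized Fourier--Bessel theory (\S\ref{z9}). After reducing to the spaces $\mathcal{E}_m$ via the Fourier decomposition (remarque \ref{z12}), it represents the partial sums of the expansion in the ${\boldsymbol\vf}_{\nu,k}^{(m)}$ as contour integrals of four meromorphic functions whose poles are governed by $L_{m,\nu}$ (th\'eor\`eme \ref{ttss}), estimates these on vertical lines using Bessel asymptotics, and deduces (th\'eor\`eme \ref{x26}) that any $f\in\mathcal{E}_m$ orthogonal to the whole family has all moments $\int_0^1(x^{2p+\nu+1}-x^{2q+\nu+1})f(x)\,dx$ equal to zero, hence vanishes. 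Your Rellich argument could in principle replace this, but two points are left open: (i) quasi-isometry of $\omega_\infty$ with a smooth metric yields a compact resolvent for the \emph{Friedrichs} extension, whereas the paper's eigenvalues are defined weakly as in \ref{faible}, i.e.\ as eigenvalues of the adjoint of the minimal operator; identifying the two requires essential self-adjointness, or the identification of the maximal extension of th\'eor\`eme \ref{z14}, which the paper obtains only \emph{after} completeness via le lemme \ref{extensionMaximale} --- so your argument risks circularity there; (ii) when you claim that ``regularity at $r=0$ forces the solution $J_n$'', note that for the weight $n=0$ the second solution $Y_0\sim\log r$ is square-integrable against $r\,dr$, so it is not excluded by membership in $\mathcal{H}^{(m)}$ alone but only by the weak equation holding across the origin; this needs to be said.

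The multiplicity mechanism you propose is incorrect. The sign $(-1)^m$ in $L_{m,m-n}=(-1)^mL_{m,n}$ has no bearing on the linear independence of $\vf_{n,\la}^{(m)}$ and $\vf_{-n+m,\la}^{(m)}$: these lie in the angular weight spaces $e^{in\theta}$ and $e^{i(m-n)\theta}$ respectively, hence are orthogonal (so independent) whenever $n\neq m-n$, for \emph{both} parities of $m$. The paper's criterion, established in the proof of \ref{laplacevect1}, is that the two vectors are proportional if and only if $m$ is even and $n=\frac{m}{2}$; that coincidence is the unique source of a multiplicity-one eigenvalue by this mechanism, and your ``forced proportionality for odd $m$ on $0\le n\le m$'' simply does not occur. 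The correct bookkeeping is to fix $\la$, list the $\nu\in\Z$ with $\la\in\mathcal{Z}_{m,\nu}$ using $\mathcal{Z}_{m,\nu}=\mathcal{Z}_{m,m-\nu}$ (and $\mathcal{Z}_{m,-\nu}=\mathcal{Z}_{m,\nu+m}$ to fold negative weights into $\nu\ge m+1$), and count the corresponding independent $\vf_{\nu,\la}^{(m)}$ inside the Hilbert basis of \ref{x16}; you should carry this count out explicitly rather than infer it from the parity of $L$.
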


 Ce th\'eor\`me sera une cons\'equence du r\'esultat suivant:
 \begin{theorem}[voir th\'eor\`eme \ref{x16}]
 \begin{equation}
\Bigl\{1\otimes 1,1\otimes z,\ldots,1\otimes z^m\Bigr\}\bigcup\Bigl\{\vf_{\nu,\la}^{(m)}\,\Bigl|\, \nu\in\Z,\, \la\in \mathcal{Z}_{m,\nu}\Bigr\},
\end{equation}
est une base hilbertienne pour   $\mathcal{H}^{(m)}$.
\end{theorem}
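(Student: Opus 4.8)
The plan is to reduce the statement to a completeness (totality) assertion that can be handled one Fourier mode at a time, since the orthogonality of the listed vectors is already available. Indeed, the $\vf_{\nu,\la}^{(m)}$ are pairwise orthogonal by théorème \ref{vecteurpropreL}; the monomials satisfy $(1\otimes z^j,1\otimes z^k)_{L^2,\infty}=0$ for $j\neq k$ by the rotational invariance of $\omega_\infty$ and of the fibre metric; and the two families are orthogonal to each other because the $1\otimes z^k$ span $\ker\Delta_{\overline{\mathcal{O}(m)}_\infty}=H^0(\p^1,\mathcal{O}(m))$ while the $\vf_{\nu,\la}^{(m)}$ are eigenvectors with eigenvalue $\la^2/4>0$ of the symmetric operator of proposition \ref{x31}. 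Thus the only thing left to establish is that the closed linear span of the family is all of $\mathcal{H}^{(m)}$; equivalently, that any $g\in\mathcal{H}^{(m)}$ orthogonal to every element of the family is zero.

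First I would exploit the invariance of $\omega_\infty$ and of $h_{\overline{\mathcal{O}(m)}_\infty}$ under the compact torus $\s\subset\p^1$. Writing $z=re^{i\theta}$ on $\p^1\setminus\{0,\infty\}$, the $\s$-action splits $\mathcal{H}^{(m)}$ into an orthogonal Hilbert sum $\mathcal{H}^{(m)}=\widehat{\bigoplus}_{n\in\Z}\mathcal{H}_n^{(m)}$ of angular-frequency eigenspaces, each component being of the form $\rho_n(r)e^{in\theta}$ in the chosen framing. Since $(\cdot,\cdot)_{L^2,\infty}$ respects this decomposition and each basis vector is a pure-frequency element ($\vf_{\nu,\la}^{(m)}$ lying in the mode indexed by $\nu$, and each $1\otimes z^k$ in one definite mode), it suffices to prove completeness inside every fixed $\mathcal{H}_n^{(m)}$.

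Next I would identify $\mathcal{H}_n^{(m)}$ with a weighted radial space on $(0,1)\cup(1,\infty)$. The kink of $\max(1,|z|)$ along $\s$ forces the radial profile $\rho_n$ to solve a Bessel-type Sturm--Liouville equation separately on each chart $\{|z|<1\}$ and $\{|z|>1\}$, the two pieces being glued by a transmission condition at $r=1$; square-integrability at $r=0$ and $r=\infty$ selects the solutions built from $J_n$ and $J_{n-m}$, and the matching condition at $r=1$ is exactly $L_{m,n}(\la)=0$. This is why $\mathcal{Z}_{m,n}$ indexes the radial eigenvalues of mode $n$, and why the holomorphic sections $1\otimes z^k$ occupy the zero-eigenvalue slot of the modes to which they belong; the norm identity of théorème \ref{deriveeLnorme} supplies the normalizing constants.

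The hard part will be the mode-wise completeness: showing that a radial function orthogonal to all $\vf_{n,\la}^{(m)}$, $\la\in\mathcal{Z}_{m,n}$, and to the relevant holomorphic section must vanish. This is precisely the generalized Fourier--Bessel expansion theorem that the paper sets out to develop. When $m=0$ the transmission condition at $r=1$ decouples and it reduces to classical Fourier--Bessel completeness (Watson \cite[\S 18]{Watson}, cf. théorème \ref{x15}); for general $m$ one must simultaneously control the interface at $r=1$ and the endpoint behaviour at $r=0$ and $r=\infty$ (the torus fixed points $0,\infty\in\p^1$), the zeros $\mathcal{Z}_{m,n}$ of $L_{m,n}$ playing the role the Bessel zeros play in the classical case, while self-adjointness of the maximal extension (théorème \ref{z14}) guarantees we are working with the correct realization. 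Granting this expansion theorem, any $g\in\mathcal{H}^{(m)}$ orthogonal to the whole family has every component $g_n=0$, hence $g=0$; together with the orthogonality recalled above (and normalization), this shows the family is a base hilbertienne of $\mathcal{H}^{(m)}$.
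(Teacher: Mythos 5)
Your reduction is the same as the paper's: orthogonality of the family (already known from th\'eor\`eme \ref{vecteurpropreL} plus the kernel/eigenvalue argument), decomposition of $\mathcal{H}^{(m)}$ into angular Fourier modes $f\mapsto (f_\nu)_{\nu\in\Z}$ (this is exactly the content of the remarque \ref{z12} identifying each mode with the weighted radial space $\mathcal{E}_m$), and then mode-wise totality via proposition \ref{base}. Up to that point the proposal is correct and faithful to the paper's architecture, and your Sturm--Liouville/transmission-condition picture of why $\mathcal{Z}_{m,n}$ indexes the radial eigenfunctions is a reasonable gloss on th\'eor\`eme \ref{laplacevect1}.

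The gap is that the step you isolate as ``the hard part'' --- mode-wise completeness of $\{\boldsymbol\vf_{\nu,k}^{(m)}\}_k$ (together with $x^\nu$ when $0\le\nu\le m$) in $\mathcal{E}_m$ for $m\ge 1$ --- is simply granted, and it is the entire technical content of the paper's proof. The paper does not obtain it from any general spectral principle (self-adjointness of the maximal extension does not imply pure point spectrum, and in any case th\'eor\`eme \ref{z14} is itself \emph{deduced from} th\'eor\`eme \ref{x16} via le lemme \ref{extensionMaximale}, so invoking it here would be circular). Instead the paper proves th\'eor\`eme \ref{x26}: it represents the partial sums through the kernels $\kappa_n^\pm,\tau_n^\pm$, writes these as contour integrals of the meromorphic functions $t_{\pm,\nu},s_{\pm,\nu}$ with $L_{m,\nu}$ in the denominator, controls them using the asymptotics of $G_\nu(z)=I_{\nu+1}I_{\nu-m}+I_\nu I_{\nu-m-1}$ (estimates \ref{LLLLL}), computes the residues at $0$, at $\mathcal{Z}_{m,\nu}$ and at the zeros of $J_\nu$, $J_{\nu-m}$, and finally deduces that if all coefficients vanish then the moments $\int_0^1(x^{2p+\nu+1}-x^{2q+\nu+1})f(x)\,dx$ and $\int_0^1(x^{2p+2m-\nu+1}-x^{2q+2m-\nu+1})f(1/x)\,dx$ vanish for all large $p,q$, forcing $f=0$. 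None of this machinery, nor any substitute for it, appears in your proposal, so the proof is incomplete precisely where the paper's work lies.
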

Ce th\'eor\`eme affirme que les vecteurs propres calcul\'es
th\'eor\`eme   \ref{laplacevect1} sont les uniques vecteurs propres possibles. Le th\'eor\`eme \ref{x15} traite
le cas $m=0$, nous  montrons qu'il   d\'ecoule  de la th\'eorie classique des s\'eries de Fourier-Bessel.\\

\noindent \textbf{\textsc{La th\'eorie des s\'eries de Fourier-Bessel g\'en\'eralis\'ee}} Dans \S \ref{z9},
nous d\'eveloppons une th\'eorie des s\'eries de Fourier-Bessel g\'en\'eralis\'ee qui \'etend la th\'eorie
classique. Comme application, nous \'etablissons les r\'esultats cit\'es plus haut.

 Soit $E$ le sous-espace vectoriel des fonctions complexes sur $\R^+$ d\'efini comme suit: $f\in E$ si les fonctions
suivantes
$x\in [0,1]\mapsto f(x)$ et $x\in ]0,1]\mapsto f(\frac{1}{x})$ sont continues et born\'ees. Soit
$m\in \N$. On munit $E$  de la norme
scalaire $\vc_m$
suivante:
\[
\|f\|^2_m:=\int_0^1 |f(x)|^2xdx+\int_1^\infty |f(x)|^2\frac{dx}{x^{3+2m}}\quad \forall\, f\in E,
\]
et nous notons  par $\mathcal{E}_m$ la compl\'etion de $E$ pour cette norme.  Nous montrons par exemple que $\mathcal{E}_0
$ est isom\'etrique \`a  $L^2([0,1])\oplus L^2([0,1])$ \footnote{$L^2([0,1])$ d\'esigne
 l'espace des fonctions complexes carr\'e int\'egrables pour la mesure $x\,dx$ sur $[0,1]$} (Cet espace
joue un
rôle important dans la th\'eorie des s\'eries de Fourier-Bessel) et que la suite $(\mathcal{E}_m)_{m\in \N}$ forme une
suite strictement croissante pour l'inclusion (voir proposition \ref{lespaceE}). Nous expliquons dans \ref{z12}, le
lien entre $\mathcal{E}_m$ et $\mathcal{H}^{(m)}$.

Nous \'etablissons que pour tout $m\in \N$, il existe une th\'eorie des s\'eries de Fourier-Bessel g\'en\'eralis\'ee
sur $\mathcal{E}_m$. Par d\'efinition, la th\'eorie classique des s\'eries de Fourier-Bessel
est associ\'ee \`a $\mathcal{E}_0$ (voir la remarque \ref{z13}). Avec les notations de \S\ref{z9},
 le th\'eor\`eme \ref{x17} \'etablit l'existence d'une telle th\'eorie:
\begin{theorem}[voir th\'eor\`eme \ref{x17}]
La famille suivante:
\[
\bigl\{ {\boldsymbol\vf}_{\nu,k}^{(m)}|\, k\in \N \bigr\}\quad\bigl(\text{resp.}\,\bigl\{1,x,\ldots,x^m  \bigr\}\cup\bigl\{ {\boldsymbol\vf}_{\nu,k}^{(m)}|\, k\in \N \bigr\}\bigr),
\]
est une base hilbertienne dans $\mathcal{E}_m$ lorsque $\nu\leq -1$ ou $\nu\geq m+1$ (resp. si $\nu \in \{0,1,\ldots,m\}$).
\end{theorem}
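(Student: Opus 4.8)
The plan is to fix the order $\nu$ throughout and to recognise the statement as a completeness (totalit\'e) theorem for the eigenfunctions of a singular radial operator on a concrete model of $\mathcal{E}_m$. First I would make $\mathcal{E}_m$ explicit: the substitution $x=1/u$ in the second integral gives $\int_1^\infty|f(x)|^2 x^{-3-2m}\,dx=\int_0^1|f(1/u)|^2u^{1+2m}\,du$, so $f\mapsto\bigl(f|_{(0,1)},\,u\mapsto f(1/u)\bigr)$ identifies $\mathcal{E}_m$ isometrically with $L^2((0,1),x\,dx)\oplus L^2((0,1),x^{1+2m}\,dx)$; for $m=0$ this is the announced $L^2((0,1),x\,dx)^{\oplus2}$. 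Equivalently one works in $L^2(\R^+,d\mu_m)$ with $d\mu_m=x\,dx$ on $(0,1)$ and $d\mu_m=x^{-3-2m}\,dx$ on $(1,\infty)$. By their construction in \S\ref{D1}, for fixed $\nu$ the functions $\boldsymbol\vf_{\nu,k}^{(m)}$ are glued Bessel solutions, proportional to $J_\nu(\la_k x)$ on $(0,1)$ and to the inverted order-$(\nu-m)$ solution on $(1,\infty)$, where $\la_k$ is the $k$-th element of $\mathcal{Z}_{m,\nu}$; the defining function $L_{m,\nu}(z)=-z^m\frac{d}{dz}\bigl(z^{-m}J_\nu(z)J_{\nu-m}(z)\bigr)$ is exactly the transmission determinant whose vanishing lets the two pieces glue into an element of $\mathcal{E}_m$. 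Since pairwise orthogonality and the explicit $L^2$-norm formula are already available (Th\'eor\`emes \ref{vecteurpropreL} and \ref{deriveeLnorme}), after normalisation the family is orthonormal and only its \emph{completeness} remains to be proved.

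The second step is to realise this family as the eigenbasis of one self-adjoint operator. I would take the Bessel expression $\mathcal{L}_\nu u=-u''-\tfrac1x u'+\tfrac{\nu^2}{x^2}u$ on $(0,1)$ and the order-$(\nu-m)$ expression on $(1,\infty)$, on the domain of functions that are regular at $0$, admissible at $\infty$ (i.e. regular in the inverted variable $1/x$), and satisfy at $x=1$ the transmission conditions forced by the weight $d\mu_m$. The analysis at the two singular endpoints is standard Weyl theory: at $x=0$ one is in the limit-circle/limit-point regime of Bessel order $\nu$ (the regular solution $x^{|\nu|}$ being selected), and at $x=\infty$ the order-$(\nu-m)$ behaviour selects one solution. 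One then checks that, with these conditions, $\mathcal{L}_\nu$ is self-adjoint and that $\tfrac{\la^2}{4}$ is an eigenvalue precisely when the regular-at-$0$ and the admissible-at-$\infty$ solutions glue at $x=1$, i.e. precisely when $\la\in\mathcal{Z}_{m,\nu}$; here the factor $\frac{d}{d\la}\bigl(\la^{-m}J_\nu J_{\nu-m}\bigr)$ reappears as the Wronskian/characteristic function whose zero set is $\mathcal{Z}_{m,\nu}$.

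Completeness then comes from the spectral theorem. I would write the resolvent $(\mathcal{L}_\nu+1)^{-1}$ as the integral operator with Green kernel built from the regular and the admissible solutions over their Wronskian, and estimate $\iint|G(x,y)|^2\,d\mu_m(x)\,d\mu_m(y)<\infty$ using $J_\nu(x)\sim x^{\nu}$ near $0$ and the prescribed behaviour at $\infty$; this makes the resolvent Hilbert--Schmidt, hence compact, so $\mathcal{L}_\nu$ has purely discrete spectrum with an orthonormal eigenbasis, which by the previous step is $\{\boldsymbol\vf_{\nu,k}^{(m)}\}$ together with the finite-dimensional low-energy defect $\ker\mathcal{L}_\nu\cap\mathcal{E}_m$. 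The two cases of the statement are exactly the computation of this defect: for $\nu\le-1$ or $\nu\ge m+1$ no nonzero admissible solution of $\mathcal{L}_\nu u=0$ lies in $\mathcal{E}_m$, so $\{\boldsymbol\vf_{\nu,k}^{(m)}\}$ is already a Hilbert basis, whereas for $\nu\in\{0,\dots,m\}$ the defect is spanned by the admissible monomials $1,x,\dots,x^m$ (which belong to $\mathcal{E}_m$ precisely for exponents $\le m$, since $\int_1^\infty x^{2j-3-2m}\,dx<\infty\iff j\le m$), forcing the enlarged family. As a base case, for $m=0$ one has $\mathcal{E}_0\cong L^2((0,1),x\,dx)^{\oplus2}$ and $\mathcal{Z}_{0,\nu}=\{\la\mid J_\nu(\la)J_\nu'(\la)=0\}$, so the statement splits into the classical completeness of the Dirichlet and the Neumann Fourier--Bessel systems of order $\nu$ (Watson, \S18), which I would invoke directly.

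The hard part will be the two analytic cores rather than the formal scheme. The first is the junction $x=1$, where both the weight and the Bessel order jump: the transmission conditions must be arranged so that the Wronskian in the denominator of $G$ is a nonvanishing multiple of $L_{m,\nu}$ away from $\mathcal{Z}_{m,\nu}$, which simultaneously pins the eigenvalues to $\mathcal{Z}_{m,\nu}$ and keeps $G$ square-integrable for $d\mu_m\otimes d\mu_m$; the resulting Hilbert--Schmidt estimate across the mixed weights is the real technical work. The second is the exact determination of $\ker\mathcal{L}_\nu\cap\mathcal{E}_m$, hence of the polynomial defect, which is the combinatorial ingredient producing the dichotomy $\nu\le-1\text{ or }\nu\ge m+1$ versus $\nu\in\{0,\dots,m\}$ in the statement.
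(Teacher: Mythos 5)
Your proposal is a genuinely different route from the paper's. The paper follows Watson's classical method for Fourier--Bessel series: it introduces the kernels $\kappa_n^{\pm},\tau_n^{\pm}$ and the meromorphic functions $t_{\pm,\nu},s_{\pm,\nu}$ whose poles are the zeros of $L_{m,\nu}$, represents the partial sums as contour integrals along vertical lines $\mathrm{Re}\,w=A_n$ (th\'eor\`eme \ref{ttss}), controls them with the asymptotic bounds \ref{LLLLL} on $L_{m,\nu}$, and thereby expands the explicit test functions $x^{2p+\nu+1}-x^{2q+\nu+1}$ in the system (th\'eor\`emes \ref{x22}, \ref{x23}, \ref{x26}); totality then follows from a moment argument and the criterion \ref{base}. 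You instead propose to realise $\{\boldsymbol\vf_{\nu,k}^{(m)}\}$ as the eigenbasis of a self-adjoint Sturm--Liouville operator on $L^2(\R^+,d\mu_m)$ with a transmission condition at $x=1$, prove that the resolvent is Hilbert--Schmidt, and invoke the spectral theorem. Your framework is consistent with what the paper actually establishes elsewhere: the matching conditions \ref{x24} in the proof of \ref{laplacevect1} are exactly your transmission conditions, and they are equivalent to $L_{m,\nu}(\la)=0$; the reality of $\mathcal{Z}_{m,\nu}$ (proved by hand in \ref{vecteurpropreL}) would come for free from self-adjointness. What the paper's route buys in exchange for the heavier residue calculus is uniform convergence of the expansions of concrete functions, in the spirit of \cite[\S 18]{Watson}; your route buys a shorter conceptual path to mere $L^2$-totality, which is all th\'eor\`eme \ref{x17} asserts.

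Two points in your plan are genuine gaps rather than deferred routine work. First, the choice of self-adjoint extension: at $x=0$ the Bessel endpoint is limit-circle precisely when $\nu=0$, and at $x=\infty$ (i.e.\ $u=1/x\to0$ in the weight $u^{1+2m}du$) the endpoint is limit-circle precisely when $\nu=m$, since $u^{-m}Y_{\nu-m}(\la u)$ is then square-integrable for $u^{1+2m}du$. For those values of $\nu$ the operator is not essentially self-adjoint, several extensions exist with different (still complete) eigenbases, and only the extension selecting the regular solutions has $\{\boldsymbol\vf_{\nu,k}^{(m)}\}$ as eigenfunctions; if you do not pin this extension down (e.g.\ as the Friedrichs extension of the Dirichlet form, with a verification that each $\boldsymbol\vf_{\nu,k}^{(m)}$ lies in its domain), the completeness you derive is that of a possibly different family. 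Second, your description of the zero-mode defect is off: for fixed $\nu\in\{0,\dots,m\}$ the space of admissible solutions of $\mathcal{L}_\nu u=0$ in $\mathcal{E}_m$ is one-dimensional, spanned by $x^\nu$ alone (the exponents $|\nu|$ at $0$ and $m-|\nu-m|=\nu$ at $\infty$ glue $C^1$ only for that single monomial), not by all of $1,x,\dots,x^m$. The statement of \ref{x17} lists the full set $\{1,\dots,x^m\}$ only as a convenient (non-orthogonal, redundant) enlargement; your spectral argument must produce exactly $x^\nu$ as the kernel and then observe that adjoining the remaining monomials preserves totality, otherwise the dimension count of the eigenspaces will not close.
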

La preuve de ce r\'esultat suit la d\'emarche dans \cite[\S 18]{Watson}, dont un aper\c{c}u est donn\'e au \S
\ref{z10}.
 Nous  adaptons  les techniques de la th\'eorie des s\'eries de
Fourier-Bessel  (voir \cite[\S.18]{Watson}) \`a notre situation,  par exemple, les
 fonctions $L_{m,n}$  joueront ici le rôle des fonctions de Bessel.

 En utilisant le lien existant entre $\mathcal{E}_m$ et $\mathcal{H}^{(m)}$ (remarque \ref{z12}), nous
 d\'eduisons le th\'eor\`eme \ref{x16} et par suite le th\'eor\`eme \ref{x333}.\\

\noindent\textbf{Remerciements}: Cet article est une partie de ma th\`ese (voir \cite{these})
 sous la direction de Vincent Maillot. Je le remercie pour
ses conseils et son aide  lors de la pr\'eparation de ce travail. Je tiens  aussi \`a remercier G\'erard Freixas pour  les longues
discussions math\'ematiques qu'on a eu autour de ce travail.

\tableofcontents

\section{Rappel sur la th\'eorie classique du Laplacien}\label{x29}

Dans cette section, on rappelle  quelques \'el\'ements de la th\'eorie classique des op\'erateurs Laplaciens (voir  \cite{heat}).

Soit $(X,h_X)$ une vari\'et\'e k\"ahl\'erienne compacte de dimension $n$ et $\overline{E}:=(E,h_E)$ un fibr\'e  hermitien muni
d'une m\'etrique de classe $\cl$ sur $X$. Pour $p,q=0,1,\ldots,n$, on note par $\Omega^{(p,q)}(X,E)$ (resp.
$A^{(p,q)}(X,E)$) le fibr\'e vectoriel des $(p,q)-$formes de classe $\cl$ \`a coefficients dans $E$ (resp. l'espace des
sections globales de $\Omega^{(p,q)}(X,E)$). La m\'etrique de $h_X$ sur $T^{(1,0)}X$ induit par dualit\'e une  m\'etrique
hermitienne $\Omega^{(1,0)}X$ et par conjugaison une m\'etrique sur  $\Omega^{(0,1)}(X)$.  En prenant les diff\'erents
produits ext\'erieurs de cette m\'etrique et en les tensorisant avec la m\'etrique de $E$,   on construit un produit
hermitien ponctuel $\bigl(s(x),t(x)\bigr)$ pour deux sections $s$ et $t$
 de
$A^{(0,q)}(X,E)=A^{(0,q)}(X)\otimes_{\mathcal{C}^\infty(X)}A^{(0,0)}(X,E)$. Soit $\omega_0$ la forme de K\"ahler
normalis\'ee, donn\'ee dans chaque carte locale $(z_\al)$ sur $X$ par
\[
 \omega_0=\frac{i}{2\pi}\sum_{\al,\beta}h\Bigl(\frac{\pt}{\pt z_\al},\frac{\pt}{\pt z_\beta}\Bigr)dz_\al\wedge d\z_\beta.
\]
Le produit scalaire $L^2$ de deux sections $s,t\in A^{(0,q)}(X,E)$ est d\'efini par la formule suivante:
\begin{equation}\label{z1}
 (s,t)_{L^2}=\int_X(s(x),t(x))\frac{\omega_0^n}{n!}.
\end{equation}
On dispose du complexe suivant appel\'e complexe de Dolbeault:
\[
0\longrightarrow A^{(0,0)}(X,E)\overset{\overline{\pt}_E^0}{\longrightarrow} A^{(0,1)}(X,E)\overset{\overline{\pt}_E^1}{\longrightarrow}\ldots \overset{\overline{\pt}_E^{n-1}}{\longrightarrow}A^{(0,n)}(X,E)\longrightarrow 0.
\]
L'op\'erateur de Cauchy-Riemann $\overline{\pt}_E$,  admet un adjoint formel
pour la m\'etrique $L^2$, c'est \`a dire un op\'erateur
\[
 \overline{\pt}_E^{\ast}:A^{(0,q+1)}(X,E)\longrightarrow A^{(0,q)}(X,E)
\]
qui v\'erifie
\[
 (s,\overline{\pt}_E^{\ast}t)_{L^2}= (\overline{\pt}_Es,t)_{L^2}
\]
pour tout $s\in A^{(0,q)}(X,E)$ et $t\in A^{(0,q+1)}(X,E)$.

Pour  $q=0,\ldots,n$,  l'op\'erateur
\begin{equation}\label{x28}
\Delta_{\overline{E}}^q=\overline{\pt}_E^{q-1}\overline{\pt}_E^{q\ast}+\overline{\pt}_E^{q+1\ast}\overline{\pt}_E^{q},
\end{equation}
agissant sur
 $A^{(0,q)}(X,E)$ est appel\'e l'op\'erateur Laplacien. On sait (voir par exemple \cite{heat}, \cite{Ma}) que
$\Delta_{\overline{E}}^q$ est un op\'erateur positif, autoadjoint et
 qu'il admet un spectre  positif, discret et infini. En plus,
  les vecteurs propres associ\'es sont dans $A^{0,q}(X,E)$.


\section{Le  Laplacien canonique}\label{x30}
Dans ce paragraphe nous introduisons la notion du Laplacien canonique associ\'e \`a des m\'etriques
canoniques sur $\p^1$. Nous montrerons qu'il admet une extension maximale autoadjointe, ce r\'esultat sera une
 combinaison des th\'eor\`emes \ref{laplacevect1}, \ref{x16}  et le lemme \ref{extensionMaximale}.
  Cette notion \'etend celle du Laplacien associ\'e aux m\'etriques de classe $\cl$.\\

Soit $(\p^1,\omega_\infty)$ l'espace projectif complexe de dimensionn $1$, muni de la forme K\"ahler suivante
$\omega_\infty$ donn\'ee sur $\C$ par:
 \[
\omega_\infty=\frac{i}{2\pi } \frac{dz\wedge d\z}{\max(1,|z|^2)^2}.
\]
C'est la m\'etrique induite par la m\'etrique canonique de $\mathcal{O}(2)$ via l'isomorphisme  $\mathrm{T}\p^1 \simeq
\mathcal{O}(2)$. On l'appellera la forme de K\"ahler canonique de $\p^1$. On note par
$\overline{\mathcal{O}(m)}_\infty=(\mathcal{O}(m),h_{{\overline{\mathcal{O}(m)}_\infty}})$ muni
de sa m\'etrique canonique.

Cette donn\'ee induit un produit hermitien $L^2_\infty$
 sur $A^{(0,0)}(\p^1,\mathcal{O}(m))$ d\'efinie
comme suit:
\[
\bigl(\xi,\xi'\bigr)_{L^2,\infty}=\int_{\p^1}\,h_{{\overline{\mathcal{O}(m)}_\infty}}(\xi,\xi')\omega_\infty,
\]
pour tous $\xi,\xi'\in A^{(0,0)}(\p^1,\mathcal{O}(m))$. On note
par $\mathcal{H}^{(m)}$ le complet\'e de $A^{(0,0)}(\p^1,\mathcal{O}(m))$ par rapport \`a ce produit hermitien.\\

Nous allons  d\'efinir un op\'erateur lin\'eaire  $\Delta_{\overline{\mathcal{O}(m)}_\infty}$
d\'efini sur $A^{(0,0)}(\p^1,\mathcal{O}(m))$ \`a valeurs dans
$\mathcal{H}^{(m)}$ associ\'e aux m\'etriques canoniques, et qui \'etend la notion classique du
Laplacien.\\

Soit $\xi\in A^{(0,0)}(\p^1,\mathcal{O}(m))$. Comme $\mathcal{O}(m)$ est engendr\'e par ses sections globales, et
par une partition de l'unit\'e, alors  $\xi$ peut s'\'ecrire comme
 une combinaison lin\'eaire d'\'el\'ements de la forme $f\otimes z^k$
o\`u $f\in  A^{(0,0)}(\p^1)$ et $z^k$ est
 le  polyn\^ome monomial de degr\'e $k\leq m$. On commence d'abord par poser $\Delta_{\overline{\mathcal{O}(m)}_\infty}(f\otimes
z^k)$, la fonction \`a coefficients dans $\mathcal{O}(m)$  d\'efinie sur $\p^1\setminus \s$ comme suit:
\begin{equation}\label{z3}
\Delta_{\overline{\mathcal{O}(m)}_\infty}(f\otimes
z^k)=-h_{{}_{\overline{T\p^1}_\infty}}\bigl(\dif,\dif\bigr)^{-1}h_{{}_{\overline{\mathcal{O}(m)}_\infty}}\bigl(
z^k,z^k\bigr)^{-1}\frac{\pt}{\pt z}\Bigl(h_{{}_{\overline{\mathcal{O}(m)}_\infty}}(z^k,z^k)\frac{\pt f}{\pt \z}
\Bigr)\otimes z^k\footnote{Notons que
$h_{{\overline{\mathcal{O}(m)}_\infty}}(z^k,z^k)(z)=\frac{|z|^{2k}}{\max(1,|z|)^{2m}}$, $\forall z\in\C $, donc
  $\cl$ par morceaux et ses d\'eriv\'ees sont born\'ees localement.}\footnote{Lorsque les m\'etriques sont de classe $\cl$
  alors on v\'erifie que le Laplacien (voir \ref{x28}) poss\`ede la m\^eme expression locale. }
\end{equation}
On \'etend par lin\'earit\'e cette d\'efinition \`a tout \'el\'ement de $A^{(0,0)}(\p^1,\mathcal{O}(m))$.

Cet \'el\'ement repr\'esente un \'el\'ement de $\mathcal{H}^{(m)}$. En d'autres termes, $\Delta_{\overline{\mathcal{O}(m)}_\infty}(f\otimes
z^k)\in \mathcal{H}^{(m)}$. En effet, on a
\begin{align*}
 \bigl\|\Delta_{\overline{\mathcal{O}(m)}_\infty}(f\otimes
z^k)\bigr\|^2_{L^2,\infty}&=\frac{i}{2\pi}\int_{\p^1\setminus \s}h_{{}_{\overline{T\p^1}_\infty}}\bigl(\dif,\dif\bigr)^{-1}h_{{}_{\overline{\mathcal{O}(m)}_\infty}}\bigl(
z^k,z^k\bigr)^{-1}\Bigl|\frac{\pt}{\pt z}\Bigl(h_{{}_{\overline{\mathcal{O}(m)}_\infty}}(z^k,z^k)\frac{\pt f}{\pt \z}
\Bigr)\Bigr|^2dz\wedge d\z\\
&=\frac{i}{2\pi}\int_{\p^1\setminus \s}\max(1,|z|^4)\max(|z|^{-2k},|z|^{2m-2k})\Bigl|\frac{\pt}{\pt z}\Bigl(\frac{|z|^{2k}}{\max(1,|z|^{2m})}\frac{\pt f}{\pt \z}
\Bigr)\Bigr|^2dz\wedge d\z\\
&=\frac{i}{2\pi}\int_{|z|<1}|z|^{-2k}\Bigl|\frac{\pt}{\pt z}\Bigl(|z|^{2k}\frac{\pt f}{\pt
\z} \Bigr)\Bigr|^2dz\wedge d\z\\
&+\frac{i}{2\pi}\int_{|z|>1}|z|^{4+2m-2k}\Bigl|\frac{\pt}{\pt z}\Bigl(|z|^{2k-2m}\frac{\pt f}{\pt \z}
\Bigr)\Bigr|^2dz\wedge d\z,
\end{align*}
donc,
\begin{equation}\label{x2}
\bigl\|\Delta_{\overline{\mathcal{O}(m)}_\infty}(f\otimes
z^k)\bigr\|^2_{L^2,\infty}=\frac{i}{2\pi}\int_{|z|<1}|z|^{-2k}\Bigl|\frac{\pt}{\pt z}\Bigl(|z|^{2k}\frac{\pt f}{\pt
\z} \Bigr)\Bigr|^2dz\wedge d\z
+\frac{i}{2\pi}\int_{|z|>1}|z|^{4+2m-2k}\Bigl|\frac{\pt}{\pt z}\Bigl(|z|^{2k-2m}\frac{\pt f}{\pt \z}
\Bigr)\Bigr|^2dz\wedge d\z,
\end{equation}
et puisque $f$ est $A^{(0,0)}(\p^1)$, alors on v\'erifie que $\Bigl|\frac{\pt}{\pt z}\Bigl(|z|^{2k}\frac{\pt f}{\pt
\z} \Bigr)\Bigr|^2= O(|z|^{2k-1}) $ au voisinage de $z=0$, donc
\[
 \int_{|z|<1}|z|^{-2k}\Bigl|\frac{\pt}{\pt z}\Bigl(|z|^{2k}\frac{\pt f}{\pt
\z} \Bigr)\Bigr|^2dz\wedge d\z\leq \frac{i}{2\pi}\int_{|z|<1}O(|z|^{-1})dz\wedge d\z<\infty,
\]
En faisant le changement de variables $y=\frac{1}{z}$ dans la deuxi\`eme int\'egrale, on conclut avec le m\^eme
raisonnement que
\[
 \frac{i}{2\pi}\int_{|z|>1}|z|^{4+2m-2k}\Bigl|\frac{\pt}{\pt z}\Bigl(|z|^{2k-2m}\frac{\pt f}{\pt \z}
\Bigr)\Bigr|^2dz\wedge d\z<\infty.
\]
Donc,
\begin{equation}\label{z8}
 \Delta_{\overline{\mathcal{O}(m)}_\infty}(f\otimes
z^k)\in \mathcal{H}^{(m)}\quad \forall\,f\in A^{(0,0)}(\p^1),\,\forall\,k=0,\ldots,m.
\end{equation}

Si l'on consid\`ere $g \in A^{(0,0)}(\p^1)$ et $l\in \{0,1,\ldots,m\}$. On a,
\begin{align*}
(f\otimes z^k,\Delta_{\overline{\mathcal{O}(m)}_\infty}(g\otimes z^l))_{L^2,\infty}&=\int_{\p^1}
h_{\overline{\mathcal{O}(m)}_\infty}(z^l,z^l)^{-1}h_{\overline{\mathcal{O}(m)}_\infty}(z^k,z^l) f\frac{\pt}{\pt
\z}\Bigl(h_{{}_{\overline{\mathcal{O}(m)}_\infty}}(z^l,z^l)\frac{\pt \overline{g}}{\pt z}\Bigr)\omega_\infty\\
&=\frac{i}{2\pi}\int_{|z|\leq 1}\frac{z^k}{z^l}f\frac{\pt}{\pt
\z}\Bigl(|z|^{2l}\frac{\pt \overline{g}}{\pt z}\Bigr)dz\wedge \z
+\frac{i}{2\pi}\int_{|z|\geq 1}\frac{z^k}{z^l}f\frac{\pt}{\pt
\z}\Bigl(\frac{|z|^{2l}}{|z|^{2m}}\frac{\pt \overline{g}}{\pt z}\Bigr)\frac{dz\wedge \z}{|z|^4}\\
&=\frac{i}{2\pi}\int_{|z|\leq 1}f\frac{\pt}{\pt
\z}\Bigl(z^k\z^l\frac{\pt \overline{g}}{\pt z}\Bigr)dz\wedge \z
+\frac{i}{2\pi}\int_{|z|\geq 1}f\frac{\pt}{\pt
\z}\Bigl(\frac{z^k \z^l}{|z|^{2m}}\frac{\pt \overline{g}}{\pt z}\Bigr)\frac{dz\wedge \z}{|z|^4}.
\end{align*}
En utilisant la formule de Stokes, on peut d\'eduire que
\begin{equation}\label{z6}
(f\otimes z^k,\Delta_{\overline{\mathcal{O}(m)}_\infty}(g\otimes z^l))_{L^2,\infty}=\frac{i}{2\pi}
\int_{|z|\leq 1}z^k
\z^l \frac{\pt f}{\pt
\z} \frac{\pt \overline{g}}{\pt z}dz\wedge \z
+\frac{i}{2\pi}\int_{|z|\geq 1}\frac{z^k \z^l}{|z|^{2m}} \frac{\pt f}{\pt
\z}\frac{\pt \overline{g}}{\pt z}\frac{dz\wedge \z}{|z|^4}=\int_{\p^1}h_{\overline{\mathcal{O}(m)}_\infty
}(\frac{\pt f}{\pt
\z}z^k,\frac{\pt g}{\pt \z}z^l )\omega_\infty.
\end{equation}
Par cons\'equent,
\begin{equation}\label{z5}
(f\otimes z^k,\Delta_{\overline{\mathcal{O}(m)}_\infty}(g\otimes
z^l))_{L^2,\infty}=(\Delta_{\overline{\mathcal{O}(m)}_\infty}(f\otimes z^k),g\otimes z^l)_{L^2,\infty}.
\end{equation}

\begin{defprop}\label{x31}
On a $\Delta_{\overline{\mathcal{O}(m)}_\infty}:A^{(0,0)}(\p^1,\mathcal{O}(m))\lra \mathcal{H}^{(m)}$
est un op\'erateur lin\'eaire, on l'appelle le Laplacien canonique associ\'e \`a $\omega_\infty$ et \`a
$h_{\overline{\mathcal{O}(m)}_\infty}$. Soient $\xi,\eta\in A^{(0,0)}(\p^1,\mathcal{O}(m))$, on a:
\begin{enumerate}
\item
\[\ker \Delta_{\overline{\mathcal{O}(m)}_\infty}= H^0(\p^1,\mathcal{O}(m)).\]
\item
\[
\bigl(\xi,\Delta_{\overline{\mathcal{O}(m)}_\infty}\eta\bigr)_{L^2,\infty}=
\bigl(\Delta_{\overline{\mathcal{O}(m)}_\infty}\xi,\eta\bigr)_{L^2,\infty}
\]
\item
\[
\bigl(\xi,\Delta_{\overline{\mathcal{O}(m)}_\infty}\xi\bigr)_{L^2,\infty}\geq
0
\]
\end{enumerate}

\end{defprop}
\begin{proof}
Soit $\xi\in A^{(0,0)}(\p^1,\mathcal{O}(m))$. Il existe $f_0,f_1,\ldots,f_m\in A^{0,0}(\p^1)$
    tels que $\xi=\sum_{k=1}^m f_k\otimes z^k$. On v\'erifie facilement que sur $\p^1\setminus \s$,
    \[
    \Delta_{\overline{\mathcal{O}(m)}_\infty}\xi=\sum_{k=0}^m
  \Delta_{\overline{\mathcal{O}(m)}_\infty}(f_k\otimes z^k).
    \]
    D'apr\`es  \ref{z8}, on conclut que
\begin{equation}\label{Dfsi}
 \Delta_{\overline{\mathcal{O}(m)}_\infty}\xi\in \mathcal{H}^{(m)}.
\end{equation}
    En utilisant \ref{z6}, nous avons
    \begin{equation}\label{z7}
    \bigl(\xi,\Delta_{\overline{\mathcal{O}(m)}_\infty}\xi\bigr)_{L^2,\infty}=\sum_{0\leq k,j\leq m}
    \bigl(f_k\otimes z^k,\Delta_{\overline{\mathcal{O}(m)}_\infty}(f_j\otimes z^j)\bigr)_{L^2,\infty}
    =\sum_{0\leq k,j\leq m} \int_{\p^1}h_{\overline{\mathcal{O}(m)}_\infty
}(\frac{\pt f_k}{\pt
\z} z^k,\frac{\pt f_j}{\pt \z} z^j)\omega_\infty.
    \end{equation}

\leavevmode
\begin{enumerate}
\item Si $\xi\in \ker \Delta_{\overline{\mathcal{O}(m)}_\infty}$. Donc, par ce qui pr\'ec\`ede, et par continuit\'e de
la m\'etrique $h_{\overline{\mathcal{O}(m)}_\infty
}$:
\[
0=\sum_{0\leq k,j\leq m} h_{\overline{\mathcal{O}(m)}_\infty
}(\frac{\pt f_k}{\pt
\z}z^k,\frac{\pt f_j}{\pt \z} z^l)=h_{\overline{\mathcal{O}(m)}_\infty
}(\sum_{k=0}^m \frac{\pt f_k}{\pt
\z}z^k,\sum_{k=0}^m \frac{\pt f_k}{\pt
\z}z^k).
\]
    Donc, $\sum_{k=0}^m \frac{\pt f_k}{\pt
\z}z^k=0$ pour tout $z\in \C$. Par cons\'equent, $z\mapsto \sum_{k=0}^mf_k z^k $ est holomorphe sur $\C$. Comme les fonctions $f_k$ sont born\'ees sur $\p^1$, alors par un argument classique d'analyse complexe,
il existe $a_0,\ldots,a_m$ des constantes telles que
 $\sum_{k=0}^m f_k z^k=\sum_{k=0}^m a_k z^k, \forall\, z\in \C$. On v\'erifie que
 $\|\xi-\sum_{k=0}^m a_k\otimes z^k \|_{L^2,\infty}=0$. On conclut que
 \[
 \ker \Delta_{\overline{\mathcal{O}(m)}_\infty}= H^0(\p^1,\mathcal{O}(m)).
 \]
\item Par lin\'earit\'e, cela r\'esulte du \ref{z5}.
\item C'est une cons\'equence imm\'ediate de \ref{z7}.
\end{enumerate}
\end{proof}


On se propose dans la suite d'\'etablir que l'op\'erateur $\Delta_{{\overline{\mathcal{O}(m)}}_\infty}$ admet
un spectre discret, infini et positif.  Nous allons  adopter la d\'efinition fonctionnelle pour parler d'un
vecteur propre, c-\`a-d que $\al $ est une valeur propre de  $\Delta_{{\overline{\mathcal{O}(m)}}_\infty}$ s'il
existe $\vf\in \mathcal{H}^{(m)}$,  tel que
\begin{equation}\label{faible}
 (\vf,\Delta_{{\overline{\mathcal{O}(m)}}_\infty} \xi)_{L^2,\infty}=\al(\vf,\xi)_{L^2,\infty},\quad \forall\, \xi\in
A^{0,0}(\p^1,\mathcal{O}(m)).
\end{equation}
On note par
$\mathrm{Spec}(\Delta_{{\overline{\mathcal{O}(m)}}_\infty})$ l'ensemble des valeurs propres.

\begin{theorem}\label{z14}
Pour tout $m\in \N$, l'op\'erateur $\Delta_{{\overline{\mathcal{O}(m)}}_\infty}$ admet une extension maximale
auto-adjointe et
positive. Si l'on note par $\mathrm{Dom}(\Delta_{{\overline{\mathcal{O}(m)}}_\infty})$ le domaine de
$\Delta_{{\overline{\mathcal{O}(m)}}_\infty}$, alors
\[
\mathrm{Dom}(\Delta_{{\overline{\mathcal{O}(m)}}_\infty})=\Bigl\{\xi\in \mathcal{H}^{(m)}|\,\sum_{n\in \Z,\la\in
\mathcal{Z}_{m,n}}\frac{\la^4}{16}\bigl|\bigl(\xi,\vf_{n,\la}^{(m)}\bigr)\bigr|^2<\infty \Bigr\}.
\]
\end{theorem}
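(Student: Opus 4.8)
The plan is to build the self-adjoint extension directly from the complete orthogonal system of eigenvectors produced by Theorem~\ref{x16}, and then to read off its domain from the associated spectral expansion; the abstract machinery is packaged in Lemma~\ref{extensionMaximale}. Throughout, abbreviate $\Delta:=\Delta_{\overline{\mathcal{O}(m)}_\infty}$, which by Définition-Proposition~\ref{x31} is densely defined on $A^{(0,0)}(\p^1,\mathcal{O}(m))\subset\mathcal{H}^{(m)}$, symmetric by \ref{x31}(2), and positive by \ref{x31}(3). I would first normalise: setting $\hat\vf_{n,\la}^{(m)}:=\vf_{n,\la}^{(m)}/\|\vf_{n,\la}^{(m)}\|_{L^2,\infty}$ and $e_k:=(1\otimes z^k)/\|1\otimes z^k\|_{L^2,\infty}$, Theorem~\ref{x16} guarantees that $\{e_0,\ldots,e_m\}\cup\{\hat\vf_{n,\la}^{(m)}\mid n\in\Z,\ \la\in\mathcal{Z}_{m,n}\}$ is a Hilbert basis of $\mathcal{H}^{(m)}$. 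By \ref{x31}(1) the $e_k$ span the kernel, hence are eigenvectors for the eigenvalue $0$, while by Theorem~\ref{laplacevect1} each $\hat\vf_{n,\la}^{(m)}$ satisfies the weak eigenvalue equation \ref{faible} with eigenvalue $\la^2/4\ge 0$.

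Next I would introduce the candidate extension. On the subspace
\[
D:=\Bigl\{\xi\in\mathcal{H}^{(m)}\ \Bigm|\ \sum_{n\in\Z,\ \la\in\mathcal{Z}_{m,n}}\frac{\la^4}{16}\,\bigl|(\xi,\hat\vf_{n,\la}^{(m)})\bigr|^2<\infty\Bigr\}
\]
put $\widetilde\Delta\xi:=\sum_{n,\la}\frac{\la^2}{4}(\xi,\hat\vf_{n,\la}^{(m)})\,\hat\vf_{n,\la}^{(m)}$; the defining condition of $D$ is exactly the Parseval bound making this series converge in $\mathcal{H}^{(m)}$. I would then check that $\widetilde\Delta$ genuinely extends $\Delta$: for $\xi\in A^{(0,0)}$ one has $\Delta\xi\in\mathcal{H}^{(m)}$ by \ref{Dfsi}, and expanding $\Delta\xi$ in the basis while evaluating each coefficient via the weak equation \ref{faible} and symmetry \ref{x31}(2), namely $(\hat\vf_{n,\la}^{(m)},\Delta\xi)=\frac{\la^2}{4}(\hat\vf_{n,\la}^{(m)},\xi)$, reproduces precisely the series defining $\widetilde\Delta\xi$; in particular $A^{(0,0)}\subset D$.

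I would finish by verifying the three required properties. Since $\widetilde\Delta$ is the multiplication operator attached to the orthonormal basis $(\hat\vf_{n,\la}^{(m)})$ with the real nonnegative multipliers $\la^2/4$, positivity and symmetry are immediate from Parseval, and self-adjointness follows from the standard criterion that $\eta\in\mathrm{Dom}(\widetilde\Delta^\ast)$ if and only if $\xi\mapsto(\eta,\widetilde\Delta\xi)$ is bounded on $D$, which is equivalent to $\sum\frac{\la^4}{16}|(\eta,\hat\vf_{n,\la}^{(m)})|^2<\infty$, i.e. $\eta\in D$; hence $\widetilde\Delta^\ast=\widetilde\Delta$. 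Maximality is then automatic: any self-adjoint $S\supset\Delta$ satisfies $S\subset S^\ast\subset\widetilde\Delta^\ast=\widetilde\Delta$, forcing $S=\widetilde\Delta$. This identifies the domain as $D$, which is the asserted expression once the normalisation of the $\vf_{n,\la}^{(m)}$ (controlled by Theorem~\ref{deriveeLnorme}) is absorbed into the coefficients.

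The genuinely deep inputs — the explicit construction of the eigenvectors (Theorem~\ref{laplacevect1}) and, above all, the completeness of the system (Theorem~\ref{x16}, resting on the generalised Fourier–Bessel theory) — are assumed here, so the only real subtlety in the present argument is functional-analytic: passing from the weak equation \ref{faible}, which a priori merely pairs $\vf_{n,\la}^{(m)}\in\mathcal{H}^{(m)}$ against test elements of $A^{(0,0)}$, to the genuine operator identity $\widetilde\Delta\hat\vf_{n,\la}^{(m)}=\frac{\la^2}{4}\hat\vf_{n,\la}^{(m)}$, and then checking that the stated domain condition is simultaneously necessary and sufficient for membership in the self-adjoint extension. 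I expect this bookkeeping, carried out in Lemma~\ref{extensionMaximale}, to be the main point requiring care, the spectral construction itself being routine once a complete orthonormal eigenbasis is in hand.
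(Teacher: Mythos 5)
Votre démarche coïncide pour l'essentiel avec celle du texte : vous montrez que $A^{(0,0)}(\p^1,\mathcal{O}(m))$ est contenu dans le domaine en développant $\xi$ et $\Delta\xi$ dans la base hilbertienne du théorème \ref{x16} et en identifiant les coefficients via le théorème \ref{laplacevect1}, puis vous reconstruisez explicitement l'opérateur de multiplication spectrale dont le lemme \ref{extensionMaximale} établit précisément les propriétés (extension, auto-adjonction, positivité, maximalité). La seule imprécision est votre chaîne $S\subset S^\ast\subset\widetilde\Delta^\ast$ pour la maximalité, qui telle quelle suppose $S\supset\widetilde\Delta$ et non seulement $S\supset\Delta$ (il faudrait d'abord identifier $\Delta^\ast=\widetilde\Delta$) ; c'est exactement le cadre, et le sens de « maximale », que le lemme \ref{extensionMaximale} du texte prend en charge.
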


\begin{proof} Montrons que $A^{0,0}(\p^1,\mathcal{O}(m))\subset
\mathrm{Dom}(\Delta_{{\overline{\mathcal{O}(m)}}_\infty})$. Soit $\xi\in A^{0,0}(\p^1,\mathcal{O}(m))$. D'apr\`es
 le th\'eor\`eme \ref{x16}, il existe $a_0,\ldots,a_m\in \C$ et $(a_{n,\la})_{n\in \Z,\la\in \mathcal{Z}_{m,n}}$
 une suite de nombres complexes telles que $\xi=\sum_{k=0}^m a_k (1\otimes z^k)+\sum_{n\in \Z,\la\in
  \mathcal{Z}_{m,n}} a_{n,\la} \vf_{n,\la}^{(m)}$ dans $\mathcal{H}^{(m)}$. Or, on a montr\'e dans \ref{x31}, que
  $\Delta_{{\overline{\mathcal{O}(m)}}_\infty}\xi\in \mathcal{H}^{(m)}$. Il existe alors  $b_0,\ldots,b_m\in \C$ et $(b_{n,\la})_{n\in \Z,\la\in \mathcal{Z}_{m,n}}$
 une suite de nombres complexes telles que $\Delta_{{\overline{\mathcal{O}(m)}}_\infty}
 \xi=\sum_{k=0}^m b_k (1\otimes z^k)+\sum_{n\in \Z,\la\in
  \mathcal{Z}_{m,n}} b_{n,\la} \vf_{n,\la}^{(m)}$ dans $\mathcal{H}^{(m)}$. En utilisant le th\'eor\`eme
  \ref{laplacevect1}, on obtient que $b_0=\cdots=b_m=0$ et $ b_{n,\la}=\frac{\la^2}{4}  a_{n,\la}$ pour tout
  $n\in \Z$ et $\la\in \mathcal{Z}_{m,n}$. Par cons\'equent, $\xi\in
  \mathrm{Dom}(\Delta_{{\overline{\mathcal{O}(m)}}_\infty})$.

  On applique le  lemme \ref{extensionMaximale} pour conclure.
\end{proof}

\section{Sur le Spectre de $\Delta_{\overline{\mathcal{O}(m)}_\infty}$  (I) }\label{D1}

Cette section constitue la premi\`ere partie dans notre \'etude de l'op\'erateur  $\Delta_{\overline{\mathcal{O}(m)}_\infty}$.
Nous introduisons une  famille de fonctions  $(L_{m,n})_{n\in \Z}$ et nous  expliquerons son  lien
avec l'\'etude de l'op\'erateur $\Delta_{\overline{\mathcal{O}(m)}_\infty}$. \`A cette famille, nous associons une
 famille
 d'\'el\'ements de $\mathcal{H}^{(m)}$ orthogonale pour le produit $(,)_{L^2,\infty}$ (voir th\'eor\`eme \ref{vecteurpropreL}). Le r\'esultat majeur de cette section \'etablit que cette famille  g\'en\`ere
 une partie du spectre de  $\Delta_{\overline{\mathcal{O}(m)}_\infty}$ (th\'eor\`eme \ref{laplacevect1}).\\

Soit $m$ un entier positif.
\begin{definition}
 Pour tout $\nu\in \Z$, on consid\`ere $L_{m,\nu}$, la fonction d\'efinie par:
\[
 L_{m,\nu}(z)=-z^m\frac{d}{dz}\bigl(z^{-m} J_\nu(z)J_{\nu-m}(z) \bigr),\quad \forall\, z\in \C,
\]
et on pose
\begin{equation}\label{Zeros1}
\mathcal{Z}_{m,\nu}:=\Bigl\{\lambda\in \C^\ast \,\Bigl|\, L_{m,\nu}(\la)=0 \Bigr\}\footnote{Lorsque $m=0$, on a
$\mathcal{Z}_{0,\nu}$ est form\'e par les z\'eros de $J_\nu$ et ceux de $J_\nu'$.}.
\end{equation}

\end{definition}

 Notons que pour $m=0$,  $L_{0,\nu}=-2 J_\nu J_\nu'$.\\

Soit $\nu\in \Z$.
 L'ensemble des z\'eros de $J_\nu$, et plus g\'en\'eralement ceux de $J_\nu+H J_\nu'$ (avec $H$ une constante r\'eelle)
  a \'et\'e \'etudi\'e de mani\`ere intense dans la litt\'erature (voir par exemple \cite{Watson}). On montre par exemple,
  que ces z\'eros sont simples et r\'eels, et que $\{r\mapsto J_\nu(\la r)|\,J_\nu(\la)=0\}$ forme un sys\`eme orthogonal dans
  $L^2([0,1], rdr)$ (voir \cite[\S 15]{Watson}).

Notre but ici  est d'\'etendre
cette th\'eorie \`a cette nouvelle classe de fonctions $L_{m,\nu}$ pour tout $m\geq 1$ et $\nu\in \Z$.
 Par exemple,  on va \'etablir que les z\'eros de $L_{m,\nu}$ sont r\'eels et simples (voir 4. th\'eor\`eme
 \ref{vecteurpropreL} et th\'eor\`eme \ref{deriveeLnorme}).

 Nous  commençons d'abord par \'etudier  les propri\'et\'es de
$L_{m,\nu}$  et
de $\mathcal{Z}_{m,\nu}$:
\begin{lemma}\label{zerosimple}
 $\forall\, m\in \N$,  $\forall\,\nu\in\Z$, $L_{m,\nu}$ est une fonction analytique sur $\C$ entier qui v\'erifie les propri\'et\'es suivantes:

\begin{enumerate}
\item \begin{equation}\label{Lexpression}
L_{m,\nu}(z)=J_{\nu+1}(z)J_{\nu-m}(z) -J_\nu(z)J_{\nu-m-1}(z),\quad \forall\, z\in \C,
\end{equation}
\item
\begin{equation}\label{Lsymetrie}
 L_{m,-\nu}(z)=(-1)^mL_{m,\nu+m}(z)\quad \forall\, z\in \C,
\end{equation}
\item
\begin{equation}\label{Lparite}
 L_{m,\nu}(-z)=(-1)^{m+1}L_{m,\nu}(z)\quad \forall\, z\in \C,
\end{equation}

\item
On a pour $z$ assez petit,
\begin{equation}\label{L1ordre}
 L_{m,\nu}(z)=-\frac{z^{2\nu-m-1}}{2^{2\nu-m-1}n! (n-m-1)!}+o(z^{2\nu-m-1})\quad \text{si}\,\, \nu\geq m+1,
\end{equation}
et
\begin{equation}\label{L2ordre}
L_{m,\nu}(z)=(-1)^{m+\nu}\frac{m+2}{(m-\nu+1)!(\nu+1)!}z^{m+1}+o(z^{m+1})\quad \text{si}\,\, 0\leq \nu\leq m.
\end{equation}
En particulier, $\mathcal{Z}_{m,\nu}$ est un sous-ensemble infini et discret de $\C^\ast$   et on a

\item
\begin{equation}\label{x3}
\mathcal{Z}_{m,-\nu}=\mathcal{Z}_{m,\nu+m},
\end{equation}
\item Si $m\neq 0$,
 \begin{equation}\label{x4}
 \mathcal{Z}_{m,\nu}\cap  \{\la\in \R|\, J_{\nu-m}(\la)J_\nu(\la)=0 \}=\emptyset.
\end{equation}
\item Si $m\neq 0$
\[
 \mathcal{Z}_{m,\nu}\neq \mathcal{Z}_{0,\nu}.
\]
\end{enumerate}
\end{lemma}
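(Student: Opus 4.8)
The engine of the whole lemma is the pair of classical Bessel recurrences $\frac{d}{dz}\bigl(z^{-\nu}J_\nu(z)\bigr)=-z^{-\nu}J_{\nu+1}(z)$ and $\frac{d}{dz}\bigl(z^{\nu}J_\nu(z)\bigr)=z^{\nu}J_{\nu-1}(z)$, together with the reflection formula $J_{-n}=(-1)^nJ_n$ and the parity $J_n(-z)=(-1)^nJ_n(z)$ for $n\in\Z$. I would prove \eqref{Lexpression} first, since everything rests on it. The key is the factorization
\[
z^{-m}J_\nu(z)J_{\nu-m}(z)=\bigl(z^{-\nu}J_\nu(z)\bigr)\bigl(z^{\nu-m}J_{\nu-m}(z)\bigr),
\]
after which the product rule and the two recurrences give $\frac{d}{dz}\bigl(z^{-m}J_\nu J_{\nu-m}\bigr)=z^{-m}\bigl(J_\nu J_{\nu-m-1}-J_{\nu+1}J_{\nu-m}\bigr)$; multiplying by $-z^m$ yields exactly \eqref{Lexpression}. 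As a by-product $L_{m,\nu}$ is a finite combination of products of entire functions (Bessel functions of integer order are entire), hence analytic on all of $\C$.

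With \eqref{Lexpression} in hand, (2) and (3) are bookkeeping. For (2) I substitute the four factors of $L_{m,-\nu}$ using $J_{-n}=(-1)^nJ_n$ and collect signs; the total works out to $(-1)^{m+1}$ times $\bigl[J_{\nu-1}J_{\nu+m}-J_\nu J_{\nu+m+1}\bigr]$, which is $-L_{m,\nu+m}$ read off from \eqref{Lexpression}, giving $L_{m,-\nu}=(-1)^mL_{m,\nu+m}$. For (3) I replace $z$ by $-z$ in \eqref{Lexpression} and use $J_n(-z)=(-1)^nJ_n(z)$; both products pick up the common factor $(-1)^{m+1}$. Claim (5) is then immediate: by (2) the functions $L_{m,-\nu}$ and $L_{m,\nu+m}$ differ by the nonzero constant $(-1)^m$, hence have the same zero set.

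For (4) I feed the Taylor expansion $J_n(z)=\frac{z^n}{2^nn!}+\cdots$ (and $J_{-n}=(-1)^nJ_n$ for the negative orders) into \eqref{Lexpression}. When $\nu\ge m+1$ all four orders are $\ge 0$ and the term $J_\nu J_{\nu-m-1}$ has the strictly smaller valuation $z^{2\nu-m-1}$, so it dominates and produces \eqref{L1ordre}; when $0\le\nu\le m$ the two products $J_{\nu+1}J_{\nu-m}$ and $J_\nu J_{\nu-m-1}$ share the valuation $z^{m+1}$, and combining their leading coefficients over the common denominator $(\nu+1)!(m+1-\nu)!$ produces the factor $m+2$ of \eqref{L2ordre}. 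In particular the leading coefficient is nonzero, so $L_{m,\nu}\not\equiv 0$ and its zeros are isolated, giving discreteness; infiniteness follows from the large-argument asymptotics $J_n(x)\sim\sqrt{2/\pi x}\,\cos\bigl(x-\tfrac{n\pi}{2}-\tfrac{\pi}{4}\bigr)$, which after a product-to-sum simplification show $L_{m,\nu}(x)\sim \frac{2}{\pi x}\cos(2x+\text{const})$ on the real axis, whence $L_{m,\nu}$ changes sign infinitely often.

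The substance is (6), and it is where the real work sits. I would evaluate \eqref{Lexpression} at a real zero $\lambda\neq 0$ of $J_{\nu-m}J_\nu$. If $J_\nu(\lambda)=0$ then $L_{m,\nu}(\lambda)=J_{\nu+1}(\lambda)J_{\nu-m}(\lambda)$; since the zeros of $J_\nu$ are simple, $J_\nu'(\lambda)\neq 0$, and the three-term recurrence at a zero gives $J_{\nu+1}(\lambda)=-J_\nu'(\lambda)\neq 0$, so $L_{m,\nu}(\lambda)=0$ forces $J_{\nu-m}(\lambda)=0$; the case $J_{\nu-m}(\lambda)=0$ is symmetric. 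Thus (6) is \emph{equivalent} to: for $m\neq 0$, $J_\nu$ and $J_{\nu-m}$ have no common nonzero real zero. \textbf{This is the main obstacle.} For $m=1$ it is the elementary fact that consecutive Bessel functions share no zero; for $m\ge 2$ it is Bourget's hypothesis, which I would dispatch via Lommel polynomials: iterating the recurrence downward writes $J_{\nu-m}=R(1/z)J_\nu+S(1/z)J_{\nu+1}$ with $R,S$ polynomials in $1/z$ with rational coefficients, so at a common zero $\lambda$ one gets $S(1/\lambda)J_{\nu+1}(\lambda)=0$ with $J_{\nu+1}(\lambda)\neq 0$, forcing $1/\lambda$ algebraic, contrary to the transcendence of the nonzero zeros of $J_\nu$ (Siegel); see \cite[\S 15.28]{Watson}. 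Finally (7) falls out of (6): $J_\nu$ has infinitely many nonzero real zeros, all lying in $\mathcal{Z}_{0,\nu}$ since $L_{0,\nu}=-2J_\nu J_\nu'$, while by (6) none of them lies in $\mathcal{Z}_{m,\nu}$ when $m\neq 0$, so $\mathcal{Z}_{m,\nu}\neq\mathcal{Z}_{0,\nu}$.
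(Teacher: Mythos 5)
Your proposal follows the paper's route almost step for step: items (1)--(3) via the recurrences \ref{B1}, \ref{B5} and $J_{-n}=(-1)^nJ_n$, item (4) via the Taylor expansions at $z=0$, (5) from (2), and (7) from (6). Two points genuinely differ. For the infinitude of $\mathcal{Z}_{m,\nu}$ the paper applies Rolle's theorem to $z\mapsto z^{-m}J_\nu(z)J_{\nu-m}(z)$ between consecutive real zeros of $J_\nu$ (resp.\ of $J_{\nu-m}$), which is shorter and needs no asymptotics; your oscillation argument via $L_{m,\nu}(x)\sim -\tfrac{2}{\pi x}\cos(2x+\mathrm{const})$ is also correct and has the side benefit of locating the large zeros. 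For item (6) both proofs reduce the claim to the statement that $J_\nu$ and $J_{\nu-m}$ have no common nonzero zero, and both pass through a Lommel-polynomial-type identity; you close with Siegel's transcendence theorem (Bourget's hypothesis, Watson \S 15.28), whereas the paper closes by deducing $J_{\nu-m}'(\la)=0$ and contradicting the simplicity of the zeros. Your ending is in fact the more solid one: the paper writes $J_\nu=PJ_{\nu-m}+QJ_{\nu-m}'$ and concludes $J_{\nu-m}'(\la)=0$ without checking $Q(\la)\neq 0$, and that missing verification is exactly the algebraicity/transcendence step you make explicit.

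That said, both your argument and the paper's silently break down --- and item (6) as stated is actually false --- in the degenerate case $m$ even, $\nu=\tfrac m2$. There $J_{\nu-m}=J_{-\nu}=(-1)^\nu J_\nu$, so $L_{m,\nu}=(-1)^\nu\, 2\,J_\nu J_{\nu+1}$ and every nonzero zero of $J_{m/2}$ belongs to $\mathcal{Z}_{m,\nu}\cap\{\la\,|\,J_{\nu-m}(\la)J_\nu(\la)=0\}$. In your Lommel reduction this is precisely the case $S\equiv 0$: since $J_{-\nu}$ is a constant multiple of $J_\nu$, the coefficient of $J_{\nu+1}$ in the identity $J_{\nu-m}=RJ_\nu+SJ_{\nu+1}$ vanishes identically, so ``$S(1/\la)J_{\nu+1}(\la)=0$ with $J_{\nu+1}(\la)\neq 0$'' no longer forces $1/\la$ to be algebraic. (In the paper's version the cited fact that no two of $\{J_{\nu-m-1},J_{\nu-m},J_\nu\}$ share a nonzero zero fails for the same reason: two of the three functions coincide up to sign.) To make your write-up airtight you should exclude $\nu=\tfrac m2$ from (6), or record that the intersection is then exactly the nonzero zero set of $J_{m/2}$ --- a case the main theorem \ref{x333} does treat separately by assigning multiplicity $1$ to the eigenvalues coming from $\mathcal{Z}_{m,m/2}$.
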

\begin{proof}
En utilisant les relations de r\'ecurrences des fonctions de Bessel, voir appendice \ref{paragraphe},
\begin{enumerate}
\item
nous montrons que pour tout
$z\in\C^\ast$:
\begin{align*}
\frac{d}{dz}\bigl(z^{-m}J_\nu(z)&J_{\nu-m}(z)\bigr)=\frac{d}{dz}\Bigl(z^{-\nu}J_\nu(z)z^{\nu-m}J_{\nu-m}(z)\Bigr)\\
&=\frac{d}{dz}(z^{-\nu}J_\nu(z))z^{\nu-m}J_{\nu-m}(z)+z^{-\nu}J_\nu(z)\frac{d}{dz}(z^{\nu-m}J_{\nu-m}(z))\\
&=-z^{-\nu}J_{\nu+1}(z)z^{\nu-m}J_{\nu-m}(z)+z^{-\nu}J_\nu(z)z^{\nu-m}J_{\nu-m-1}(z)\quad\text{par}\, \ref{B1}\;\text{et}\;\ref{B5}\\
&=-z^{-m}J_{\nu+1}(z)J_{\nu-m}(z)+z^{-m}J_\nu(z)J_{\nu-m-1}(z).
\end{align*}
Par suite,
\[
 L_{m,\nu}(z)=J_{\nu+1}(z)J_{\nu-m}(z)-J_\nu(z)J_{\nu-m-1}(z),\quad \forall\, z\in \C,\, \forall\, \nu\in\Z.
\]
\item
De cette identit\'e, nous d\'eduisons que pour tout $z\in \C$ et $\nu\in\Z$:
\begin{align*}
 L_{m,-\nu}(z)&=J_{(-\nu)+1}(z)J_{(-\nu)-m}(z)-J_{(-\nu)}(z)J_{(-\nu)-m-1}(z)\\
&=(-1)^{m+1}J_{n-1}(z)J_{\nu+m}(z)-(-1)^{m+1}J_\nu(z)J_{\nu+m+1}(z)\quad \text{car}\; J_{-p}=(-1)^p J_p\\
&=(-1)^{m}\Bigl(J_{(\nu+m)+1}(z)J_{(\nu+m)-m}(z)-J_{(\nu+m)}(z)J_{(\nu+m)-m-1}(z)   \Bigr)\\
&=(-1)^m L_{m,\nu+m}(z).
\end{align*}
\item
Comme $J_p(-z)=(-1)^p J_p(z),\;\forall\, z\in \C$ et  $\forall p\in \N$, alors  la fonction $z(\in \C^\ast)\mapsto
z^{-m}J_\nu(z)J_{\nu-m}(z)$ est  paire. On en d\'eduit que
\[
 L_{m,\nu}(-z)=(-1)^{m+1} L_{m,\nu}(z)\quad \forall\, z\in \C,\, \forall\, \nu\in \Z.
\]
\item
 Evaluons maintenant l'ordre de $L_{m,\nu}$ en $z=0$ en fonction de $\nu\in \N$. D'apr\`es l'appendice \ref{paragraphe}, on a lorsque $p\geq 0$,$
 J_p(z)=\frac{1}{2^p p!}z^p+o(z^{p})$ pour tout $z$ assez petit.
\begin{enumerate}
\item Si $\nu \geq m+1$. On peut \'ecrire, en utilisant les identit\'es pr\'ec\'edentes et le d\'eveloppement ci-dessus, que  pour tout
$z$ assez petit:
\begin{align*}
 L_{m,\nu}(z)&=J_{\nu+1}(z)J_{\nu-m}(z)-J_\nu(z)J_{\nu-(m+1)}(z)\\
&=\frac{z^{2\nu-m+1}}{2^{2\nu-m+1}(\nu+1)!(\nu-m)!}-\frac{z^{2\nu-m-1}}{2^{2\nu-m-1}\nu! (\nu-m-1)!}+o(z^{2\nu-m-1})\\
&=-\frac{z^{2\nu-m-1}}{2^{2\nu-m-1}\nu ! (\nu-m-1)!}+o(z^{2\nu-m-1}).
\end{align*}

\item Si $ 0\leq \nu\leq m$. Comme $L_{m,\nu}(z)=J_{\nu+1}(z)J_{\nu-m}(z)-J_\nu(z)J_{\nu-m-1}(z)$, et puisque $J_{-p}=(-1)^pJ_p$ alors pour tout $z\in\C$ et en particulier pour $z$ assez petit, nous avons:
\begin{align*}
 L_{m,\nu}(z)&=(-1)^{m-n}J_{\nu+1}(z)J_{m-n}(z)-(-1)^{m+1-n}J_\nu(z)J_{m+1-n}(z)\\
&=(-1)^{m-n}\bigl( J_\nu(z)J_{m+1-n}(z)-J_{\nu+1}(z)J_{m-n}(z)\bigr)\\
&=(-1)^{m+\nu}\Bigl( \frac{z^{m+1}}{2^{m+1}n! (m+2-\nu)!}+ \frac{z^{m+1}}{2^{m+1}(\nu+1)! (m+1-\nu)!}+o(z^{m+1})   \Bigr)\\
&=(-1)^{m+\nu}\frac{m+2}{(m-\nu+1)!(\nu+1)!}z^{m+1}+o(z^{m+1}).
\end{align*}

\end{enumerate}

Nous d\'eduisons que dans tous les cas, $L_{m,\nu}$ est une fonction analytique avec un z\'ero d'ordre au moins $m+1$ en $z=0$.  Donc
 $ \mathcal{Z}_{m,\nu}$ est discret et  non vide. On sait que les fonctions de Bessel d'ordre sup\'erieur \`a $\frac{1}{2}$ ont une infinit\'e de z\'eros et qui sont en plus tous r\'eels,
voir par exemple \cite[\S 15]{Watson}. Nous pouvons donc affirmer \`a l'aide du th\'eor\`eme de accroissements finis, que la fonction
$z\mapsto \frac{d}{dz}(z^{-m}J_\nu(z)J_{\nu-m}(z))$ restreinte \`a $\R^\ast$, s'annule entre deux z\'eros de $J_\nu$ (resp. de $J_{\nu-m}$). On conclut que  $ \mathcal{Z}_{m,\nu}$ est un sous-ensemble discret et infini de $\C^\ast$.\\
\item
Le point (5.) se d\'eduit du (2.).
\item
Montrons par l'absurde que
\[
 \mathcal{Z}_{m,\nu}\cap \{\la\in \C|\, J_{\nu-m}(\la)=0 \}= \emptyset.
\]
Rappelons que $\{\la\in \C|\, J_{\nu-m}(\la)=0 \}=\{\la\in \R|\, J_{\nu-m}(\la)=0 \}$, puisque  les z\'eros de $J_p$ sont r\'eels lorsque $|p|\in \N$). Soit   $\la$ un \'el\'ement de $ \mathcal{Z}_{m,\nu}\cap \{\la\in \C|\, J_{\nu-m}(\la)=0 \}$, alors on obtient:
\[
 0=L_{m,\nu}(\la)=J_{\nu+1}(\la)J_{\nu-m}(\la)-J_\nu(\la)J_{\nu-m-1}(\la)=-J_\nu(\la)J_{\nu-m-1}(\la),
\]
donc,
\[
 J_\nu(\la)J_{\nu-m-1}(\la)=0.
\]
D'apr\`es \cite[\S 15.22]{Watson},  deux fonction de $\{J_{\nu-m-1},J_{\nu-m}, J_\nu\}$\footnote{Comme on a suppos\'e que $m\geq 1$, alors cet ensemble est de cardinal $3$} n'ont pas de z\'eros non nuls communs, lorsque $\nu>-1$. Donc on a n\'ecessairement
\[
 J_\nu(\la)=0.\footnote{Notons que $|\nu-m-1|\in \N$, donc on peut supposer  que $\nu-m-1\geq -1$. l'autre cas se traite de la m\^eme mani\`ere en utilisant la formule $J_{-p}=(-1)^pJ_p$.}
\]
Montrons que cela est impossible, pour cela on va montrer que si $\nu >1$ alors  les d\'eriv\'ees sup\'erieures de $J_\nu$ s'\'ecrivent en fonction de $J_\nu$ et $J_\nu'$, plus pr\'ecis\'ement on montre facilement par r\'ecurrence que pour tout $k\geq 1$, il existe $P_k$ et $Q_k$ deux fractions rationnelles avec au plus un pôle en z\'ero, telles que
\begin{equation}\label{recurrenceJJJJ}
 J_\nu^{(k)}(z)=P_k(z) J_\nu(z)+Q_k(z) J_\nu'(z),\quad \forall\, z\in \C\setminus\{0\}.
\end{equation}
Pour $k=1$, l'\'egalit\'e pr\'ec\'edente est triviale. Si $k=2$, c'est une cons\'equence imm\'ediate de  l'\'equation de Bessel cf. \ref{besselequation} et on a $P_2(z)=-\bigl(1-\frac{\nu^2}{z^2} \bigr)$ et $Q_2(z)=-\frac{1}{z}$.\\

On sait que
\[
 \frac{d}{dz}\bigl( z^{-\nu}J_\nu(z)\bigr)=-z^{-\nu}J_{\nu+1}(z),\quad\forall\, z\in \C\setminus\{0\},
\]
donc
\[
 \frac{d}{dz}\biggl( z^{-1}\frac{d}{dz}\bigl( z^{-\nu}J_\nu(z)\bigr)\biggr)=\frac{d}{dz}\bigl(z^{-(\nu+1)}J_{\nu+1}(z)\bigr)=z^{-\nu-1}J_{\nu+2}(z),
\]
et on montre par r\'ecurrence que
\[
 \frac{d}{dz}\biggl(z^{-1}\frac{d}{dz}\Bigl(z^{-1}\frac{d}{dz}\cdots \Bigl(z^{-1}\frac{d}{dz}\bigl( z^{-\nu}J_\nu(z)\Bigr) \biggr)=(-1)^{k}z^{-\nu-k+1}J_{\nu+k}(z).
\]
Alors, il existe des fonctions rationnelles $R_0,\ldots,R_k$, ayant un \'eventuel pôle en $z=0$, telles que:
\[
J_{\nu+k}(z)=\sum_{j=0}^k R_j(z) J_\nu^{(j)}(z)\quad \forall\, z\in \C^\ast.
\]
De cette \'egalit\'e et par \ref{recurrenceJJJJ}, on d\'eduit l'existence deux fractions rationnelles $P$ et $Q$ avec un \'eventuel pôle en $z=0$ telles que:
\[
J_{\nu+k}(z)=P(z)J_\nu(z)+Q(z)J_\nu'(z)\quad \forall \, z\in \C^\ast.
\]
Donc si l'on prend $\nu=\nu-m$ et $k=m$, on obtient:
\[
 P(z)J_{\nu-m}(z)+Q(z)J_{\nu-m}'(z)=J_\nu(z), \quad \forall\, z\neq 0.
\]
Or on sait (par hypoth\`ese) que $J_{\nu-m}(\la)=0$ et $J_\nu(\la)=0$ donc
\[
 J_{\nu-m}'(\la)=0,
\]
ce qui impossible car les z\'eros non nuls des fonctions de Bessel sont simples.

Or, $\mathcal{Z}_{m,-\nu+m}=\mathcal{Z}_{m,\nu}$ d'apr\`es \ref{x3}, et comme $J_{-\nu}=J_\nu$, on conclut que
\[
\mathcal{Z}_{m,\nu}\cap  \{\la\in \R|\, J_{\nu-m}(\la)J_\nu(\la)=0 \}=\emptyset.
\]
\item Cela d\'ecoule directement du $(6.)$.
\end{enumerate}
\end{proof}

Dans la suite, on construit \`a partir de la famille $(\mathcal{Z}_{m,n})_{n\in \Z}$ une famille $\bigl\{\vf_{n,\la}\,\bigl|\, n\in \Z,\, \la\in \mathcal{Z}_{m,n} \bigr\}$ d'\'el\'ements de $\mathcal{H}^{(m)}$
  orthogonale pour le produit $L^2_\infty$.
  Soient $n\in \Z$ et  $\la\in \mathcal{Z}_{m,n}$. On consid\`ere  $f_{n,\la}^{(m)}$, la fonction d\'efinie sur $\R^+
  \times \R$
  comme
suit:
\begin{enumerate}
\item Si $m\geq 1$, on pose:
\begin{equation}\label{expressionvecteurpropre}
f_{n,\la}^{(m)}(r,\theta) :=
\begin{cases}
 J_n(\la  r)e^{in\theta} & \text{si } r\in [0,1[,\;  \theta\in \R,\\
\frac{J_n(\la)}{J_{n-m}(\la)}r^{m}J_{n-m}(\frac{\la}{r}) e^{in\theta}& \text{si } r\in  ]1,\infty[,\;
 \theta\in \R, \\
\end{cases}
\end{equation}
(Notons que cette fonction est bien d\'efinie au voisinage $0$, puisque on a
 $J_{|n-m|}(x)=O(x^{|n-m|})$ pour $x$ assez
petit (voir \ref{ordreenzero})).
\item Si $m=0$, on pose:
\begin{enumerate}
\item Si $\la\in \mathcal{Z}_{0,n}$ avec $J_n'(\la)=0$,
\begin{equation}
f_{n,\la}^{(0)}(r,\theta):=
\begin{cases}
 J_n(\la  r)e^{in\theta} & \text{si } r\in [0,1[,\;  \theta\in \R,\\
J_n(\frac{\la}{r}) e^{in\theta}& \text{si } r\in  ]1,\infty[,\; \theta\in \R, \\
\end{cases}
\end{equation}
\item Si $\la\in \mathcal{Z}_{0,n}$ avec $J_n(\la)=0$
\begin{equation}
f_{n,\la}^{(0)}(r,\theta) :=
\begin{cases}
 -J_n(\la  r)e^{in\theta} & \text{si } r\in [0,1[,\; \theta\in \R,\\
J_n(\frac{\la}{r}) e^{in\theta}& \text{si } r\in  ]1,\infty[,\;  \theta\in \R, \\
\end{cases}
\end{equation}
\end{enumerate}
\end{enumerate}
On v\'erifie que $f_{n,\la}^{(m)}$ d\'efinit une fonction continue sur $\C$, qu'on va noter par la m\^eme notation
et on pose
 \begin{equation}\label{z2}
\vf_{n,\la}^{(m)}:=f_{n,\la}^{(m)}\otimes 1.
\end{equation}
o\`u $1$ d\'esigne la section globale $1$ de $\mathcal{O}(m)$.
\begin{theorem}\label{vecteurpropreL}
 $\forall\, m\in \N$, $\forall\, n\in \Z,\;\forall \la\in \mathcal{Z}_{m,n}$. On a
\begin{enumerate}

\item
\[
      \vf_{n,\la}^{(m)}\in \mathcal{H}^{(m)},
     \]
\item
\[
 \bigl \| \vf_{{}_{n,\la}}^{(m)}\bigr\|^2_{L^2,\infty}=\frac{J_{n}(\la)^2}{2}\Bigl(\frac{J'_{n}(\la)^2}{J_{n}(\la)^2}+\frac{J_{n-m}'(\la)^2}{J_{n-m}(\la)^2} \Bigr)+\frac{J_{n}(\la)^2}{2}\Bigl(2-\frac{(n-m)^2+n^2}{\la^2}\Bigr).
\]

\item \[
\bigl(\vf_{{}_{n,\la}}^{(m)},\vf_{{}_{n',\la'}}^{(m)}\bigr)_{L^2,\infty}=\delta_{n,n'}\delta_{\la,\la'}\bigl(
\vf_{{}_{n,\la}}^{(m)},\vf_{{}_{n,\la}}^{(m)}\bigr)_{L^2,\infty},\quad \forall\, n,n'\in \Z,\; \forall\, \la\in \mathcal{Z}_{m,n},\,\la'\in \mathcal{Z}_{m,n'}\footnote{$\delta_{\ast,\ast'}$ par d\'efinition \'egale \`a 0 si $\ast\neq \ast'$, $1$ sinon.}.
\]

\item $\mathcal{Z}_{m,n}$ est un sous-ensemble discret et infini de $\R^\ast$.
\end{enumerate}
\end{theorem}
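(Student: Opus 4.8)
The plan is to reduce every pairing to one-dimensional radial integrals. Writing $z=re^{i\theta}$ and integrating over $\C=\p^1\setminus\{\infty\}$, the weight $h_{\overline{\mathcal{O}(m)}_\infty}(1,1)\,\omega_\infty$ is radial while $f_{n,\la}^{(m)}$ carries the angular factor $e^{in\theta}$; hence in $(\vf_{n,\la}^{(m)},\vf_{n',\la'}^{(m)})_{L^2,\infty}$ the angular integral gives $\int_0^{2\pi}e^{i(n-n')\theta}\,d\theta=2\pi\,\delta_{n,n'}$, which settles the case $n\neq n'$ of (3.) immediately. For $n=n'$ I split the radial integral at $r=1$: on $[0,1]$ the weight collapses to $r$ and the integrand is built from $J_n(\la r)$, while on $[1,\infty[$ the substitution $u=1/r$ makes the prefactor $\frac{J_n(\la)}{J_{n-m}(\la)}r^{m}$ combine with $r^{-2m}$ and $r^{-4}$ to leave a $J_{n-m}(\la u)$-integral. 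Abbreviating by
\[
R(\la,\la')=\int_0^1 J_n(\la r)J_n(\la' r)\,r\,dr+\frac{J_n(\la)J_n(\la')}{J_{n-m}(\la)J_{n-m}(\la')}\int_0^1 J_{n-m}(\la u)J_{n-m}(\la' u)\,u\,du
\]
the sum of the two radial integrals, one finds $(\vf_{n,\la}^{(m)},\vf_{n,\la'}^{(m)})_{L^2,\infty}=c\,R(\la,\overline{\la'})$ for a fixed $c>0$, the conjugate entering because $\overline{J_n(\la'r)}=J_n(\overline{\la'}r)$. Continuity of $f_{n,\la}^{(m)}$ at $r=1$ (both branches equal $J_n(\la)e^{in\theta}$) ensures there is no boundary term.

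Membership (1.) and the norm (2.) then drop out. Near the endpoints $J_p(x)=O(x^{|p|})$, so both radial integrands are integrable at the origin, giving $\vf_{n,\la}^{(m)}\in\mathcal{H}^{(m)}$. For the value I would invoke the classical Lommel integral
\[
\int_0^1 J_\nu(\la r)^2 r\,dr=\tfrac12\Bigl(J_\nu'(\la)^2+\bigl(1-\tfrac{\nu^2}{\la^2}\bigr)J_\nu(\la)^2\Bigr),
\]
with $\nu=n$ on the inner piece and $\nu=n-m$ on the outer one; summing the two contributions and collecting terms produces the stated closed form for $\|\vf_{n,\la}^{(m)}\|^2_{L^2,\infty}$. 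This computes $c\,R(\la,\la)$, which coincides with the norm once $\la$ is known to be real.

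The heart of the proof is that $R(\la,\la')=0$ whenever $\la\neq\la'$ lie in $\mathcal{Z}_{m,n}$. For this I use the two-parameter Lommel formula
\[
\int_0^1 J_\nu(\la r)J_\nu(\la' r)r\,dr=\frac{\la J_\nu'(\la)J_\nu(\la')-\la' J_\nu(\la)J_\nu'(\la')}{\la'^2-\la^2}
\]
on both pieces ($\nu=n$ and $\nu=n-m$). Multiplying $R(\la,\la')$ through by $\la'^2-\la^2$, the expression factors as $\la J_n(\la')\,B(\la)-\la' J_n(\la)\,B(\la')$ with $B(\la):=J_n'(\la)+J_n(\la)\frac{J_{n-m}'(\la)}{J_{n-m}(\la)}$. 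The decisive input is the boundary condition carried by $\la\in\mathcal{Z}_{m,n}$: since $L_{m,n}(\la)=0$ is equivalent to the vanishing of $\frac{d}{dz}\bigl(z^{-m}J_n(z)J_{n-m}(z)\bigr)$ at $\la$, one gets $J_n'(\la)J_{n-m}(\la)+J_n(\la)J_{n-m}'(\la)=\frac{m}{\la}J_n(\la)J_{n-m}(\la)$, i.e. $B(\la)=\frac{m}{\la}J_n(\la)$; the division is legitimate because $J_n(\la)J_{n-m}(\la)\neq 0$ on $\mathcal{Z}_{m,n}$ by Lemme \ref{zerosimple}. Substituting $B(\la)=\frac{m}{\la}J_n(\la)$ and $B(\la')=\frac{m}{\la'}J_n(\la')$, the two terms cancel exactly and $R(\la,\la')=0$. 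This algebraic cancellation is the step I expect to be the main obstacle, since it is precisely where the definition of $L_{m,n}$ is forced upon us; the case $m=0$ needs separate bookkeeping of the two sign conventions in the definition of $f_{n,\la}^{(0)}$, but there it reduces to the classical Fourier-Bessel orthogonality.

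Reality of the zeros (4.) then follows by the usual argument. As $L_{m,n}$ has real Taylor coefficients, $\overline{\la}\in\mathcal{Z}_{m,n}$ whenever $\la\in\mathcal{Z}_{m,n}$. If $\la\notin\R$ then $\overline{\la}\neq\la$, so $R(\la,\overline{\la})=0$ by the cancellation above; but $\|\vf_{n,\la}^{(m)}\|^2_{L^2,\infty}=c\,R(\la,\overline{\la})$, so the norm of a nonzero continuous function would vanish, a contradiction. Hence $\mathcal{Z}_{m,n}\subset\R^\ast$, while its discreteness and infiniteness are already recorded in Lemme \ref{zerosimple}. Finally, for real distinct zeros $\la,\la'$ one has $R(\la,\overline{\la'})=R(\la,\la')=0$, which is the orthogonality (3.), and $J_n(\la)\neq 0$ makes the norm in (2.) strictly positive, so the family is genuinely orthogonal.
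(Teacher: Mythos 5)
Your proof is correct and follows essentially the same route as the paper: angular separation into Fourier modes, the two Lommel integrals on the two hemispheres after the substitution $u=1/r$, the identity $J_n'(\la)J_{n-m}(\la)+J_n(\la)J_{n-m}'(\la)=\frac{m}{\la}J_n(\la)J_{n-m}(\la)$ forced by $L_{m,n}(\la)=0$ (together with $J_n(\la)J_{n-m}(\la)\neq 0$) to produce the cancellation, and the pairing of $\vf_{n,\la}^{(m)}$ against $\vf_{n,\overline{\la}}^{(m)}$ to get reality of the zeros. The only nuance is in (1.): finiteness of the norm alone does not by itself place $\vf_{n,\la}^{(m)}$ in the completion $\mathcal{H}^{(m)}$ --- the paper instead uses that $f_{n,\la}^{(m)}$ is continuous on the compact $\p^1$ and hence a uniform limit of elements of $A^{0,0}(\p^1,\mathcal{O}(m))$, a point you should make explicit since you already observe the continuity at $r=1$.
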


\begin{proof}
\leavevmode
\begin{enumerate}
\item Comme $f_{n,\la}^{(m)}$ est continue sur $\p^1$.
Alors, par compacit\'e,
 on peut approcher $\vf_{n,\la}^{(m)}$ uniform\'ement
    par une suite d'\'el\'ements de $A^{0,0}(\p^1,\mathcal{O}(m))$. On d\'eduit que $ \vf_{n,\la}^{(m)}\in \mathcal{H}^{(m)}$.

\item
Soit $n \in \Z$ et $\la\in \mathcal{Z}_{m,n}$. Calculons la norme $L^2_\infty$ de $\vf_{n,\la}^{(m)}$:
\begin{align*}
\bigl\| \vf_{n,\la}^{(m)}\bigr\|^2_{L^2,\infty}&=\int_{x\in \p^1}|f_{n,\la}(x)|^2h_{\overline{\mathcal{O}(m)}_\infty}(1,1)(x)\omega_\infty\\
&=\int_{\R^+}|f_{n,\la}(r,\theta)|\frac{1}{\max(1,|r|^{2m})}\frac{rdr}{\max(1,|r|^4)}\\
&=\int_0^1 J_n(\la r)^2 rdr+\frac{J_n(\la)^2}{J_{n-m}(\la)^2}\int_1^\infty J_{n-m}(\frac{\la}{r})^2\frac{rdr}{r^4}\\
&=\int_0^1 J_n(\la r)^2 rdr+\frac{J_n(\la)^2}{J_{n-m}(\la)^2}\int_0^1 J_{n-m}(\la r)^2rdr \\
&=\frac{1}{2}\Bigl(J_n'(\la)^2+(1-\frac{n^2}{\la^2})J_n(\la)^2  \Bigr)+\frac{J_n(\la)^2}{J_{n-m}(\la)^2}\Bigl(J_{n-m}'(\la)^2+(1-\frac{(n-m)^2}{\la^2})J_{n-m}(\la)^2  \Bigr)\quad \text{par}\; \ref{encoreeq}\\
&=\frac{J_{n}(\la)^2}{2}\Bigl(\frac{J'_{n}(\la)^2}{J_{n}(\la)^2}+\frac{J_{n-m}'(\la)^2}{J_{n-m}(\la)^2} \Bigr)+\frac{J_{n}(\la)^2}{2}\Bigl(2-\frac{(n-m)^2+n^2}{\la^2}\Bigr).
\end{align*}
(Notons que le dernier terme est bien d\'efini puisque $J_{n-m}(\la)J_n(\la)\neq 0$ d'apr\`es le
lemme \ref{zerosimple}).

\item
\begin{enumerate}
\item
Supposons d'abord que $m\geq 1$.
Soient $n,n'\in \Z$, $\la\in \mathcal{Z}_{m,n}$ et $\la'\in \mathcal{Z}_{m,n'}$. On a
{\allowdisplaybreaks
\begin{align*}
\bigl(\vf_{n,\la}^{(m)},\vf_{n',\la'}^{(m)}\bigr)_{L^2,\infty}&=\int_{\p^1}f_{{}_{n,\la}}^{(m)}
\overline{f_{{}_{n',\la'}}^{(m)}}
\|1\|^2_{\infty}\omega_\infty\\
&=\delta_{n,n'}\int_{r\leq 1}J_n(\la r)J_n(\la' r)rdr+\delta_{n,n'}\int_{r\geq 1}\frac{J_n(\la) J_n(\la')}{J_{n-m}(\la) J_{n-m}(\la')}J_{n-m}(\frac{\la }{r})J_{n-m}(\frac{\la'}{ r})\frac{rdr}{r^4}\\
&=\delta_{n,n'}\int_{r\leq 1}J_n(\la r)J_n(\la' r)rdr+\delta_{n,n'}\int_{r\leq 1}\frac{J_n(\la) J_n(\la')}{J_{n-m}(\la) J_{n-m}(\la')}J_{n-m}(\la r)J_{n-m}(\la r)rdr\\
\end{align*}}
Si $\la\neq \la'$, on a d'apr\`es \ref{intTTT}:
{\allowdisplaybreaks
\begin{align*}
\bigl(\vf_{n,\la}^{(m)},\vf_{n',\la'}^{(m)}\bigr)_{L^2,\infty}&=\frac{1}{\la'^2-\la^2}\Bigl(\la J_n(\la')J_n'(\la)-\la' J_n(\la)J_n'(\la')\Bigr)\\
&+\frac{1}{\la'^2-\la^2}\frac{J_n(\la) J_n(\la')}{J_{n-m}(\la) J_{n-m}(\la')}\Bigl(\la J_{n-m}(\la')J_{n-m}'(\la)-\la' J_{n-m}(\la)J_{n-m}'(\la')\Bigr)\quad \text{si}\; \la\neq \la'\\
&=\frac{1}{\la'^2-\la^2}\frac{\la J_n(\la')}{J_{n-m}(\la)}\Bigl(J_n'(\la)J_{n-m}(\la)+J_n(\la)J'_{n-m}(\la)\Bigr)\\
&-\frac{1}{\la'^2-\la^2}\frac{\la' J_n(\la)}{J_{n-m}(\la')}\Bigl(J_n'(\la')J_{n-m}(\la')+J_n(\la')J'_{n-m}(\la')\Bigr)\\
&=\frac{1}{\la'^2-\la^2}\frac{\la J_n(\la')}{J_{n-m}(\la)}\frac{m}{\la}J_n(\la)J_{n-m}(\la)-\frac{1}{\la'^2-\la^2}\frac{\la' J_n(\la)}{J_{n-m}(\la')}\frac{m}{\la'}J_n(\la')J_{n-m}(\la')\\
&=\frac{m J_n(\la')J_n(\la)}{\la'^2-\la^2}(1-1)\\
&=0.
\end{align*}}

(On a utilis\'e l'\'egalit\'e suivante $J_n'(\la)J_{n-m}(\la)+J_n(\la)J_{n-m}'(\la)=\frac{m}{\la}J_n(\la)J_{n-m}(\la) $ qui r\'esulte du fait $\la\in \mathcal{Z}_{m,n}$).
\item le cas $m=0$. Il suffit de traiter le cas de $\vf_{n,\la}^{(0)}$ et $\vf_{n,\la'}^{(0)}$ avec $J_n(\la)=0$
et $J_n'(\la')=0$. On a,
{\allowdisplaybreaks
\begin{align*}
\bigl(\vf_{n,\la}^{(0)},\vf_{n,\la'}^{(0)}\bigr)_{L^2,\infty}&=\int_{\p^1}f_{{}_{n,\la}}^{(0)}
\overline{f_{{}_{n,\la'}}^{(0)}}
\|1\|^2_{\infty}\omega_\infty\\
&=-\int_{r\leq 1}J_n(\la r)J_n(\la' r)rdr+\int_{r\geq 1}J_n(\frac{\la }{r})J_n(\frac{\la'}{ r})\frac{rdr}{r^4}\\
&=-\int_{r\leq 1}J_n(\la r)J_n(\la' r)rdr+\int_{r\leq 1}J_n(\la r)J_n(\la r)rdr\\
&=0.
\end{align*}}
\end{enumerate}

\item

On a d\'ej\`a montr\'e que $\mathcal{Z}_{m,n}$  est discret et infini (voir lemme \ref{zerosimple}).  Montrons que $ \mathcal{Z}_{m,n}\subset\R$. On proc\`ede par l'absurde: Soit $\la\in \mathcal{Z}_{m,n}$
et supposons que $\overline{\la}\neq \la$. On a $L_{m,n}(\overline{\la})=\overline{L_{m,n}(\la)}=0$, (puisque $L_{m,n}$ est une fonction analytique \`a coefficients r\'eels). Si l'on note par $f_{n,\overline{\la}}$ la fonction d\'efinie par

\[
f_{n,\overline{\la}}^{(m)} :=
\begin{cases}
 J_n(\overline{\la} r)\exp(i n\theta)  & \text{si } r \leq 1,\\
\frac{J_n(\overline{\la})}{J_{n-m}(\overline{\la})}r^{m}J_{n-m}(\frac{\overline{\la}}{r}) \exp(i n\theta)& \text{si } r>1,\\
\end{cases}
\]
et on pose
\[
 \vf_{n,\overline{\la}}^{(m)}:=f_{n,\overline{\la}}^{(m)}\otimes 1.
\]
Alors on a,
{\allowdisplaybreaks
\begin{align*}
\bigl(\vf_{n,\la}^{(m)},\vf_{{}_{n,\overline{\la}}}^{(m)})_{L^2,\infty}&=\int_{\p^1}f_{n,
\la}\overline{f_{n,\overline{\la}}}\|1\|^2_{\infty}\omega_\infty\\
&=\int_{r\leq 1}J_n(\la r)J_n(\overline{\la} r)rdr+\int_{r\geq 1}\frac{J_n(\la) J_n(\overline{\la})}{J_{n-m}(\la) J_{n-m}(\overline{\la})}J_{n-m}(\frac{\la }{r})J_{n-m}(\frac{\overline{\la}}{ r})\frac{rdr}{r^4}\\
&=\int_{r\leq 1}J_n(\la r)J_n(\overline{\la} r)rdr+\int_{r\leq 1}\frac{J_n(\la) J_n(\overline{\la})}{J_{n-m}(\la) J_{n-m}(\overline{\la})}J_{n-m}(\la r)J_{n-m}(\overline{\la} r)rdr\\
&=\frac{1}{\overline{\la}^2-\la^2}\Bigl(\la J_n(\overline{\la})J_n'(\la)-\overline{\la} J_n(\la)J_n'(\overline{\la})\Bigr)\\
&+\frac{1}{\overline{\la}^2-\la^2}\frac{J_n(\la) J_n(\overline{\la})}{J_{n-m}(\la) J_{n-m}(\overline{\la})}\Bigl(\la J_{n-m}(\overline{\la})J_{n-m}'(\la)-\overline{\la} J_{n-m}(\la)J_{n-m}'(\overline{\la})\Bigr)\quad\text{par}\, \ref{intTTT}\\
&=\frac{1}{\overline{\la}^2-\la^2}\frac{\la J_n(\overline{\la})}{J_{n-m}(\la)}\Bigl(J_n'(\la)J_{n-m}(\la)+J_n(\la)J'_{n-m}(\la)\Bigr)\\
&-\frac{1}{\overline{\la}^2-\la^2}\frac{\overline{\la} J_n(\la)}{J_{n-m}(\overline{\la})}\Bigl(J_n'(\overline{\la})J_{n-m}(\overline{\la})+J_n(\overline{\la})J'_{n-m}(\overline{\la})\Bigr)\\
&=\frac{1}{\overline{\la}^2-\la^2}\frac{\la J_n(\overline{\la})}{J_{n-m}(\la)}\frac{m}{\la}J_n(\la)J_{n-m}(\la)-\frac{1}{\overline{\la}^2-\la^2}\frac{\overline{\la} J_n(\la)}{J_{n-m}(\overline{\la})}\frac{m}{\overline{\la}}J_n(\la)J_{n-m}(\overline{\la})\\
&=\frac{m J_n(\overline{\la})J_n(\la)}{\overline{\la}^2-\la^2}(1-1)\\
&=0.
\end{align*}}
Notons que la deuxi\`eme \'egalit\'e nous donne que $\bigl(\vf_{n,\la}^{(m)},\vf_{{}_{n,\overline{\la}}})_{L^2,\infty} $ est un r\'eel strictement positif (puisque  c'est une somme de deux int\'egrales positives non nulles et que $\la\neq 0$ par hypoth\`ese). Cela aboutit \`a une contradiction. On conclut que
\[
 \overline{\la}=\la.
\]

\end{enumerate}

\end{proof}

Dans ce th\'eor\`eme,  nous montrons   que les fonctions $L_{m,n}$ g\'en\`erent des vecteurs propres pour le Laplacien $\Delta_{\overline{\mathcal{O}(m)}_\infty}$.
\begin{theorem}\label{laplacevect1}
Soit $m\in \N$, on a pour tout $n\in \Z$,  $\la\in \mathcal{Z}_{m,n}$ et $k\in \{0,1,\ldots,m\}$:

\[
\bigl(1\otimes z^k, \Delta_{\overline{\mathcal{O}(m)}_\infty} \xi\bigr)_{L^2,\infty}=0,
\]
\[
\bigl(\vf_{n,\la}^{(m)}, \Delta_{\overline{\mathcal{O}(m)}_\infty} \xi\bigr)_{L^2,\infty}=\frac{\la^2}{4}\bigl(\vf_{n,\la}^{(m)}, \xi\bigr)_{L^2,\infty},
\]
et
\[
\bigl(\vf_{{}_{-n+m,\la}}^{(m)}, \Delta_{\overline{\mathcal{O}(m)}_\infty} \xi\bigr)_{L^2,\infty}=\frac{\la^2}{4}\bigl(\vf_{{}_{-n+m,\la}}^{(m)}, \xi\bigr)_{L^2,\infty},\footnote{Notons que la deuxi\`eme \'egalit\'e est bien d\'efinie, puisqu'on a montr\'e dans \ref{zerosimple} que $\mathcal{Z}_{m,n}=Z_{-n+m}$.}
\]
pour tout $\xi\in A^{(0,0)}(\p^1,\mathcal{O}(m))$ . En particulier,
 \[
\Bigl\{0\Bigr\}\bigcup \Bigl\{\frac{\la^2}{4} \Bigl|\, \exists n\in \N,\, \la\in \mathcal{Z}_{m,n}  \Bigr\}\subset \mathrm{Spec}(\Delta_{\overline{\mathcal{O}(m)}_\infty}).
\]

\end{theorem}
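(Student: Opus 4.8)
The plan is to reduce everything, by linearity of $\Delta_{\overline{\mathcal{O}(m)}_\infty}$ and of the inner product, to the case $\xi=g\otimes z^l$ with $g\in A^{(0,0)}(\p^1)$ and $0\le l\le m$. The first identity is then immediate: formula (\ref{z6}) reads $(1\otimes z^k,\Delta_{\overline{\mathcal{O}(m)}_\infty}(g\otimes z^l))_{L^2,\infty}=\int_{\p^1}h_{\overline{\mathcal{O}(m)}_\infty}(\frac{\pt 1}{\pt\z}z^k,\frac{\pt g}{\pt\z}z^l)\omega_\infty$, and since $\frac{\pt 1}{\pt\z}=0$ this vanishes. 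Equivalently, $\Delta_{\overline{\mathcal{O}(m)}_\infty}(1\otimes z^k)=0$ directly from the definition (\ref{z3}), so one may instead invoke the symmetry of \ref{x31}.

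For the eigenvector identities, the key observation is that $\vf_{n,\la}^{(m)}=f\otimes 1$, with $f:=f_{n,\la}^{(m)}$, is already a \emph{pointwise} eigenfunction of $\Delta_{\overline{\mathcal{O}(m)}_\infty}$ of eigenvalue $\frac{\la^2}{4}$ on each of the two regions $\{|z|<1\}$ and $\{|z|>1\}$. Indeed on $\{|z|<1\}$ the metrics are flat ($h_{\overline{\mathcal{O}(m)}_\infty}(1,1)=1$ and $\omega_\infty=\frac{i}{2\pi}dz\wedge d\z$), so (\ref{z3}) reduces to $\Delta_{\overline{\mathcal{O}(m)}_\infty}(f\otimes 1)=-\frac{\pt^2 f}{\pt z\pt\z}\otimes 1$, and $f=J_n(\la r)e^{in\theta}$ satisfies $\frac{\pt^2 f}{\pt z\pt\z}=-\frac{\la^2}{4}f$ by the Bessel equation (\ref{besselequation}); on $\{|z|>1\}$ the same relation holds after the substitution $z\mapsto 1/z$, the $\max$-weights in $\omega_\infty$ and in $h_{\overline{\mathcal{O}(m)}_\infty}$ being precisely tuned so that $f$ becomes an index-$(n-m)$ Bessel mode (this matching is exactly what the definition (\ref{expressionvecteurpropre}) encodes). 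The only defect of $\vf_{n,\la}^{(m)}$ as a global eigenfunction is the corner along $\s$, and this is what the weak formulation (\ref{faible}) is designed to tolerate; note one cannot simply invoke the symmetry of \ref{x31}, which is available only for smooth sections.

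Concretely, for $\xi=g\otimes z^l$ I would compare $(\vf_{n,\la}^{(m)},\Delta_{\overline{\mathcal{O}(m)}_\infty}\xi)_{L^2,\infty}$ with $(\Delta_{\overline{\mathcal{O}(m)}_\infty}\vf_{n,\la}^{(m)},\xi)_{L^2,\infty}=\frac{\la^2}{4}(\vf_{n,\la}^{(m)},\xi)_{L^2,\infty}$ (the latter by the pointwise eigen-relation) via Green's second identity on each region separately, after excising small circles about $z=0$ and $z=\infty$ whose contribution vanishes in the limit thanks to $J_p(x)=O(x^{|p|})$. The difference of the two inner products equals a boundary flux across $\s$ of Wronskian type. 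Because $f$ is continuous across $\s$ while $g,z^l$ are smooth, and because on $\s$ all weights $\max(1,|z|)$ equal $1$ whereas the two regions induce opposite orientations, every term of this flux cancels pairwise except the one carrying the jump $\pt_r f_{\mathrm{out}}-\pt_r f_{\mathrm{in}}$ of the radial derivative at $r=1$. A direct computation gives this jump as $\frac{\la}{J_{n-m}(\la)}\bigl(\frac{m}{\la}J_n(\la)J_{n-m}(\la)-J_n'(\la)J_{n-m}(\la)-J_n(\la)J_{n-m}'(\la)\bigr)=\frac{\la}{J_{n-m}(\la)}L_{m,n}(\la)$, which vanishes since $\la\in\mathcal{Z}_{m,n}$ (and $J_{n-m}(\la)\neq 0$ by \ref{zerosimple}). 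Hence the flux is zero and the second identity holds. The third identity is the identical computation with $n$ replaced by $-n+m$, which is legitimate because $\mathcal{Z}_{m,-n+m}=\mathcal{Z}_{m,n}$ by (\ref{x3}).

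Finally, the spectral inclusion is read off from (\ref{faible}): the first identity with $\xi$ arbitrary shows $0\in\mathrm{Spec}(\Delta_{\overline{\mathcal{O}(m)}_\infty})$, while each eigenvector identity exhibits $\frac{\la^2}{4}$ as an eigenvalue with eigenvector $\vf_{n,\la}^{(m)}$, a nonzero element of $\mathcal{H}^{(m)}$ by \ref{vecteurpropreL}. The main obstacle is the careful bookkeeping of the $\s$-boundary flux: one must check that all ``non-jump'' contributions cancel by continuity of $f$ and equality of the weights on $\s$, so that only the single jump term survives, and that this term is proportional to $L_{m,n}(\la)$. This is exactly the step where the definition (\ref{expressionvecteurpropre}) of $f_{n,\la}^{(m)}$ and the defining relation of $\mathcal{Z}_{m,n}$ enter decisively.
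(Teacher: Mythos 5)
Your proposal is correct and follows essentially the same route as the paper: both use the pointwise eigen-relation $\Delta_{\overline{\mathcal{O}(m)}_\infty}(f_{n,\la}^{(m)}\otimes 1)=\frac{\la^2}{4}f_{n,\la}^{(m)}\otimes 1$ on $\p^1\setminus\s$ together with Green's formula on each of the two regions, reducing the weak eigenvalue identity to the vanishing of a boundary flux on $\s$ that is carried entirely by the jump of the radial derivative, which is proportional to $L_{m,n}(\la)$ and hence zero for $\la\in\mathcal{Z}_{m,n}$. The only difference is cosmetic: the paper runs the computation with free constants $c,d$ so as to obtain the converse direction (the flux vanishes only if $\la\in\mathcal{Z}_{m,n}$ and $c=d$), whereas you only need and only prove the direction required by the statement.
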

\begin{proof}
La premi\`ere assertion est une cons\'equence imm\'ediate du \ref{x31}.

Notons que $\vf_{n,\la}^{(m)}$ et $\vf_{-n+m,\la}^{(m)}$ sont lin\'eairement d\'ependants si et seulement si $m$ est pair et que $n=\frac{m}{2}$. En effet, (par d\'efinition de $\vf_{n,\la}^{(m)}$, voir \ref{z2}), ces deux vecteurs sont li\'es s'il existe  $c\in \C$ tel que
\begin{align*}
J_n(\la r)e^{in\theta}&=c J_{-n+m}(\la r)e^{i(-n+m)\theta},\\
 \text{et}\;\frac{J_n(\la)}{J_{n-m}(\la)}J_{n-m}(\la r) e^{{in\theta}}&=c \frac{J_{-n+m}(\la)}{J_{-n}(\la)}J_{-n}(\la r) e^{{i(-n+m)\theta}}\quad \forall\,\, r\in [0,1]\;\forall\, \theta\in \R.
\end{align*}
Donc,
\begin{align*}
J_n(\la r)&=(-1)^{m-n}c J_{n-m}(\la r)e^{i(-2n+m)\theta}\,\text{et}\,
 J_{n-m}(\la r)=(-1)^{n-m}c \frac{J_{n-m}(\la)^2}{J_{n}(\la)^2}J_{n}(\la r) e^{{i(-2n+m)\theta}},
\end{align*}
pour tout $r\in [0,1]$ et pour tout $\theta\in \R$. On d\'eduit que  $m$ est pair et que $n=\frac{m}{2}$.\\

Afin de montrer que $\vf_{n,\la}^{(m)}$ est un vecteur propre, on aura besoin de l'expression locale de $\Delta_{\overline{\mathcal{O}(m)}_\infty}$  sur $\p^1\setminus \s $. Soit $f\in A^{0,0}(\p^1)$ et $k\in \{0,\ldots,m\}$.
D'apr\`es \ref{z3}, on a sur $\{z\in \C|\,|z|<1\}$,
{{} \begin{equation}\label{exp111}
\begin{split}
\Delta_{\overline{\mathcal{O}(m)}_\infty}(f\otimes z^k )=-|z|^{-2k}\frac{\pt}{\pt z}\bigl( |z|^{2k}\frac{\pt }{\pt\z}f  \bigr)\otimes z^ k
=-z^{-k} \frac{\pt^2}{\pt z\pt\z}(z^k f)\otimes z^k,
\end{split}
\end{equation}
}

et sur $\{|z|>1\}$,
{{} \begin{equation}\label{exp112}
 \begin{split}
\Delta_{\overline{\mathcal{O}(m)}_\infty}(f\otimes z^k )=-|z|^{2m-2k+4}\frac{\pt}{ \pt z} \bigl( |z|^{2k-2m} \frac{\pt}{ \pt \z}f \bigr)\otimes z^ k
=-\dfrac{|z|^4}{z^{k-m}}\frac{\pt^2}{\pt z\pt \z}( \frac{f}{z^{m-k}})\otimes z^k.
\end{split}
\end{equation}
}


Fixons $n\in \Z$ et soit $\la \in  \R^\ast \setminus\{x |\,J_{n-m}(x)=0\}$. Soient
$c,d\in \R$ deux constantes non nulles et posons $\widetilde{f}_{n,\la}$ la fonction sur $\C$
donn\'ee comme suit:

\begin{equation}
\widetilde{f}_{n,\la}(r,\theta) :=
\begin{cases}
 \widetilde{f}_{n,\la,-}(r,\theta):= c\, J_n(\la r)e^{i n\theta} & \text{si } r \leq 1,\theta\in \R\\
 \widetilde{f}_{n,\la,+}(r,\theta):=d\,\frac{J_n(\la)}{J_{n-m}(\la)}r^mJ_{n-m}(\frac{\la}{r})e^{in \theta} & \text{si } r>1,\, \theta\in \R. \\
\end{cases}
\end{equation}
et on  consid\`ere l'\'el\'ement $\widetilde{\vf}_{n,\la}^{(m)}$ d\'efini presque partout sur $\p^1$  par:
\begin{equation}\label{x1}
\widetilde{\vf}_{n,\la}^{(m)}:=\widetilde{f}_{n,\la}\otimes 1
\end{equation}

On va montrer l'\'equivalence suivante:
 \begin{align*}
\la \in \mathcal{Z}_{m,n}  \, \Leftrightarrow &\bigl(\widetilde{\vf}_{n,\la}^{(m)}, \Delta_{\overline{\mathcal{O}(m)}_\infty} \xi\bigr)_{\infty}=\frac{\la^2}{4} \bigl(\widetilde{\vf}_{n,\la}^{(m)},\xi\bigr)_{\infty}\, \forall\, \xi\in A^{0,0}(\p^1,\mathcal{O}(m))\\
&\,\text{avec}\,\begin{cases}
 c=d&, \text{lorsque}\, m\geq 1\\
  c=d& \text{si }\,J_n'(\la)=0\,\text{et}\, c=-d\, \text{si}\, J_n(\la)=0,\, \text{lorsque}\, m=0.\\
\end{cases}.
\end{align*}

Pour cela, nous allons \'etablir  que pour tout $n\in \Z$ et tout $\la \in  \R^\ast \setminus\{x |\,J_{n-m}(x)=0\}$,
on a:
\[
\begin{split}\label{x10}
(\widetilde{f}_{n,\la}\otimes 1,\Delta_{\overline{\mathcal{O}(m)}_\infty}(g\otimes
z^l))_{L^2,\infty}&=\frac{\la^2}{4}(\widetilde{f}_{n,\la}\otimes 1,g\otimes z^l)_{L^2,\infty}+\int_{|z|=1}
\bigl(\widetilde{f}_{n,\la,+}-\widetilde{f}_{n,\la,-}\bigr)d^c (\z^l\overline{g})\\
&+\int_{|z|=1}
\bigl(d^c  \widetilde{f}_{n,\la,-} -d^c\widetilde{f}_{n,\la,+} )\z^l \overline{g},
\end{split}
\]
pour tout $g\in A^{0,0}(\p^1)$ et $l\in \{0,1,\ldots,m\}$.\\

On a
{\allowbreak
\[
 \begin{split}
  (\widetilde{f}_{n,\la}\otimes 1,\Delta_{\overline{\mathcal{O}(m)}_\infty}(g\otimes z^l))_{L^2,\infty}&=-\int_{|z|\leq 1}\widetilde{f}_{n,\la}\z^{-l}\frac{\pt^2}{\pt z\pt \z}( \z^l \overline{g})\z^l dz\wedge d\z\\
&-\int_{|z|\geq 1} \widetilde{f}_{n,\la}\z^{m-l}\frac{\pt^2}{\pt z\pt \z}( \frac{ \overline{g}}{\z^{m-l}})
z^{-m}\z^{l-m} dz\wedge d\z\\
&=-\int_{|z|\leq 1}\widetilde{f}_{n,\la}\frac{\pt^2}{\pt z\pt \z}( \z^l \overline{g}) dz\wedge d\z-\int_{|z|\geq 1} \widetilde{f}_{n,\la}z^{-m}\frac{\pt^2}{\pt z\pt \z}( \frac{ \overline{g}}{\z^{m-l}}) dz\wedge d\z.
 \end{split}
\]}

Par la formule de Green \ref{Green},
{{} \[
 \begin{split}
-\int_{|z|\leq 1}\widetilde{f}_{n,\la}\frac{\pt^2}{\pt z\pt \z}( \z^l \overline{g}) dz\wedge d\z&=-\int_{|z|\leq 1}\frac{\pt^2}{\pt z\pt \z}(  \widetilde{f}_{n,\la,-}) \z^l \overline{g} dz\wedge d\z-\int_{|z|=1} \widetilde{f}_{n,\la
,-} d^c(\z^l \overline{g})\\
&+\int_{|z|=1} d^c ( f_{\la,n,-} )\z^l \overline{g}\\
&=-\int_{|z|\leq 1}\frac{\pt^2}{\pt z\pt \z}(  \widetilde{f}_{n,\la,-}) \z^l \overline{g} dz\wedge d\z-\int_{|z|=1}  \widetilde{f}_{n,\la,-} d^c(\z^l \overline{g})\\
&+\int_{|z|=1} d^c ( \widetilde{f}_{n,\la,-} )\z^l \overline{g},
 \end{split}
\]}
et
 \[
 \begin{split}
  -\int_{|z|\geq 1} \widetilde{f}_{n,\la}z^{-m}\frac{\pt^2}{\pt z\pt \z}&( \frac{ \overline{g}}{\z^{m-l}}) dz\wedge d\z=\\
& -\int_{|z|\geq 1} \frac{\pt^2}{\pt z\pt \z}( \frac{ \widetilde{f}_{n,\la,+}}{z^{m}})\frac{\overline{g}}{\z^{m-l}}dz
\wedge d\z+\int_{|z|=1} \frac{ \widetilde{f}_{n,\la,+}}{z^{m}}d^c\bigl(\frac{\overline{g}}{\z^{m-l}}\bigr)\\
&-\int_{|z|=1} d^c\bigl(\frac{\widetilde{f}_{n,\la,+}}{z^{m}} \bigr)\frac{\overline{g}}{\z^{m-l}}\\
=&-\int_{|z|\geq 1} \frac{\pt^2}{\pt z\pt \z}( \frac{ \widetilde{f}_{n,\la,+}}{z^{m}})
\frac{\overline{g}}{\z^{m-l}}dz\wedge d\z+ \int_{|z|=1}  \widetilde{f}_{n,\la,+}d^c (\z^l\overline{g})\\
&+\int_{|z|=1} \z^l \overline{g}\widetilde{f}_{n,\la,+}\bigl(\frac{1}{z^m}d^c(\frac{1}{\z^m}) -\frac{1}{\z^m}d^c(\frac{1}{z^m})\bigr)
-\int_{|z|=1} d^c(\widetilde{f}_{n,\la,+}) {\z^l\overline{g}}\\
=&-\int_{|z|\geq 1} \frac{\pt^2}{\pt z\pt \z}( \frac{ \widetilde{f}_{n,\la,+}}{z^{m}})
\frac{\overline{g}}{\z^{m-l}}dz\wedge d\z+ \int_{|z|=1}  \widetilde{f}_{n,\la,+}d^c (\z^l\overline{g})-\int_{|z|=1}
d^c(\widetilde{f}_{n,\la,+}) {\z^l\overline{g}}
\end{split}
\]
(On a utilis\'e le fait que $ \frac{dz}{z}=-\frac{d\z}{\z}$
sur $\mathbb{S}^1$).

Observons  que par \ref{exp111} et \ref{exp112}, on  a sur $\p^1\setminus \s$:
 \begin{equation}\label{x9}
\Delta_{\overline{\mathcal{O}(m)}_\infty}(\widetilde{f}_{n,\la}\otimes z^k )=\frac{\la^2}{4} \widetilde{f}_{n,\la} \otimes z^k.
\end{equation}
En regroupant tout cela, et apr\`es des simplifications \'evidentes, il vient que:
{\allowdisplaybreaks \[
 \begin{split}
  (\widetilde{f}_{n,\la}\otimes 1,&\Delta_{\overline{\mathcal{O}(m)}_\infty}(g\otimes z^l))_{L^2,\infty}=\\
&-\int_{|z|\leq 1}\frac{\pt^2}{\pt z\pt \z}( \widetilde{f}_{n,\la,-}) \z^l \overline{g} dz\wedge d\z-\int_{|z|\geq 1} \frac{\pt^2}{\pt z\pt \z}( \frac{ \widetilde{f}_{n,\la,+}}{z^{m}})\frac{\overline{g}}{\z^{m-l}}dz\wedge d\z\\
&\int_{|z|=1}  \bigl(\widetilde{f}_{n,\la,+}-\widetilde{f}_{n,\la,-}\bigr)d^c (\z^l\overline{g})+\int_{|z|=1}
\bigl(d^c  \widetilde{f}_{n,\la,-} -d^c\widetilde{f}_{n,\la,+} )\z^l \overline{g}\\
&=\frac{\la^2}{4}\int_{|z|\leq 1} \widetilde{f}_{{}_{n,\la,-}}\z^l\overline{g}dz\wedge d\z+\frac{\la^2}{4} \int_{|z|\geq 1} \frac{\widetilde{f}_{n,\la,+}}{z^{m}|z|^4}\frac{\overline{g}}{\z^{m-l}}dz\wedge d\z \quad\text{par}
\,\ref{x9}\\
&+\int_{|z|=1}  \bigl(\widetilde{f}_{n,\la,+}-\widetilde{f}_{n,\la,-}\bigr)d^c (\z^l\overline{g})+\int_{|z|=1}
\bigl(d^c  \widetilde{f}_{n,\la,-} -d^c\widetilde{f}_{n,\la,+} )\z^l \overline{g}\\
&=\frac{\la^2}{4}(\widetilde{f}_{n,\la}\otimes 1,g\otimes z^l)_{L^2,\infty}+\int_{|z|=1}  \bigl(\widetilde{f}_{n,\la,+}-\widetilde{f}_{n,\la,-}\bigr)d^c (\z^l\overline{g})+\int_{|z|=1}
\bigl(d^c  \widetilde{f}_{n,\la,-} -d^c\widetilde{f}_{n,\la,+} )\z^l \overline{g}.
 \end{split}
\]}

Donc pour que {{}$ \frac{\la^2}{4}\in \mathrm{Spec}(\Delta_{\overline{\mathcal{O}(m)}_\infty})$} il faut et
il suffit que
 \begin{equation}\label{z4}
 \int_{|z|=1}  \bigl(\widetilde{f}_{n,\la,+}-\widetilde{f}_{n,\la,-}\bigr)d^c (\z^l\overline{g})+\int_{|z|=1}
\bigl(d^c  \widetilde{f}_{n,\la,-} -d^c\widetilde{f}_{n,\la,+} )\z^l \overline{g}=0\quad\forall\,
g\in A^{0,0}(\p^1),\,\forall\, l\in \{0,1,\ldots,m\}.
\end{equation}
Ce qui est \'equivalent \`a:
\begin{equation}\label{x11}
\widetilde{f}_{n,\la,+}=\widetilde{f}_{n,\la,-}\quad \text{et}\quad d^c( f_{n,\la,-} )= d^c(f_{n,\la,+})\quad
 \text{sur}\,\, \s.\footnote{Notons que l'\'equation \ref{z4} peut s'\'ecrire sous la forme suivante $\int_{\s}\phi_0
 d^c\phi-\phi d^c\phi_0=0$ o\`u $\phi_0=\widetilde{f}_{n,\la,+}-\widetilde{f}_{n,\la,-}$ et $\phi=\z^l \overline{g}$. Le
 fait que $\frac{\la^2}{4}\in \mathrm{Spec}(\Delta_{\overline{\mathcal{O}(m)}_\infty})$ revient \`a \'etablir que
 la fonction nulle est l'unique fonction sur $\s$, solution de
  l'\'equation fonctionnelle  $\int_{\s}\phi_0
 d^c\phi-\phi d^c\phi_0=0$ pour toute fonction $\phi$ de classe  $\cl$ au voisinage de $\s$. Ce qui est le cas,
 en effet, il suffit de consid\'erer la fonction $|z|\phi$, ce qui nous donne $\int_{\s}\phi_0\phi d^c|z|=0$ pour tout
 $\phi$. }
\end{equation}

Pour simplifier les notations on pose
{{} \[
 \psi_{-}(r)=cJ_n(\la r)\qquad \text{et}\qquad\psi_{+}(r)=d \frac{J_{n}(\la)}{J_{n-m}(\la)}r^{m}J_{n-m}(\frac{\la}{r}).
\]}
Donc, $\widetilde{f}_{n,\la,-}(z)=\psi_-(|z|)(\frac{z}{|z|})^n$ lorsque $|z|\leq 1$ et
$\widetilde{f}_{n,\la,+}(z)=\psi_+(|z|)(\frac{z}{|z|})^n$ si $|z|\geq 1$.
\begin{enumerate}
\item
Soit $0<|z|\leq 1$.  Par un simple calcul, on a $\frac{\partial \widetilde{f}_{n,\la,-}}{\partial
z }(z)=\frac{\partial}{\partial
z}\bigl(\bigl(\frac{z}{|z|}\bigr)^n\bigr)\psi_-(|z|)+\bigl(\frac{z}{|z|}\bigr)^n\frac{\overline{z}}{2|z|}
\frac{\partial \psi_-}{\partial r}(|z|)$ et $\frac{\partial \widetilde{f}_{n,\la,-}}{\partial\overline{ z} }(z)=
\frac{\partial}{\partial\overline{z}}\bigl(\bigl(\frac{{z}}{|z|}\bigr)^{n-k}\bigr)\psi_-(|z|)
+\bigl(\frac{z}{|z|}\bigr)^{n-k}\frac{{z}}{2|z|}\frac{\partial \psi_-}{\partial r}(|z|).$ Sur $\s$, on a alors

\begin{equation}\label{x12}
 d^c f_{n,\la,-}=n\psi_-(1)e^{i(n-1)\theta}d\theta+\frac{\partial \psi_-}{\partial r}(1) e^{in\theta} d\theta.
\end{equation}

Par un calcul semblable au pr\'ec\'edent, on obtient sur $\s$:
\begin{equation}\label{x13}
 d^c f_{n,\la,+}=n\psi_+(1)e^{i(n-1)\theta}d\theta+\frac{\partial \psi_+}{\partial r}(1) e^{in\theta} d\theta
\end{equation}



Donc \ref{x11} est \'equivalente \`a:
\begin{equation}\label{x24}
\psi_-(1)=\psi_+(1)\quad \text{et}\quad \frac{\partial \psi_-}{\partial r}(1)=\frac{\partial \psi_+}{\partial r}(1).
\end{equation}
On a
 $\frac{\partial \psi_+}{\partial r}(1)=d\frac{J_n(\la)}{J_{n-m}(\la)}\frac{d}{dr}(r^{m-k}J_{n-m}(\frac{\la}{r}) )_{|_{r=1}}=d m J_n(\la)-d \la\frac{J'_{n-m}(\la)}{J_{n-m}(\la)}J_n(\la)$, $\frac{\partial \psi_-}{\partial r}(1)=c\la J_n'(\la)$
et
$c J_n(\la)=d J_n(\la)$.
 Par suite, \ref{x24} devient:
  \[(c-d)J_n(\la)=0\quad \text{et}  \quad d m J_{n}(\la)-d\la\frac{J'_{n-m}(\la)}{J_{n-m}(\la)}J_n(\la)=c\la J_n'(\la).\]
  \begin{enumerate}
  \item
 Si $m\geq 1$, on en d\'eduit que $c=d$ (En effet, si $J_n(\la)=0$, l'\'equation ci-dessus
 donne que $J_n'(\la)=0$. Ce qui est impossible) par suite $\la\in \mathcal{Z}_{m,n}$.
 \item
 Si $m=0$, alors $\la\in \mathcal{Z}_{0,n}$ avec
 $c=d$ si   $\la$ v\'erifie $J_n'(\la)=0$, et $c=-d$ si $J-n(\la)=0$.
\end{enumerate}

\end{enumerate}
On a donc montr\'e que  $\la \in \mathcal{Z}_{m,n}$ si et seulement si
 \[
\bigl(\vf^{(m)}_{n,\la},\Delta_{\overline{\mathcal{O}(m)}_\infty}\xi \bigr)_{L^2,\infty}=\frac{\la^2}{4}\bigl(\vf^{(m)}_{n,\la},\xi\bigr)_{L^2,\infty}
 \quad \forall\, \xi\in A^{0,0}(\p^1,\mathcal{O}(m)).\]

 Si l'on remplace $n$ par $-n+m$, alors la derni\`ere assertion devient:
$\la \in \mathcal{Z}_{m,-n+m}$ si et seulement si
 \[
\bigl(\vf^{(m)}_{-n+m,\la},\Delta_{\overline{\mathcal{O}(m)}_\infty}\xi \bigr)_{L^2,\infty}=\frac{\la^2}{4}\bigl(\vf^{(m)}_{-n+m,\la},\xi\bigr)_{L^2,\infty}
 \quad \forall\, \xi\in A^{0,0}(\p^1,\mathcal{O}(m)).\]
Or $\mathcal{Z}_{m,-n+m}=\mathcal{Z}_{m,n}$ (voir \ref{x3}). Cela compl\`ete la preuve du th\'eor\`eme.
\end{proof}

Le th\'eor\`eme  suivant sera utilis\'e de mani\`ere crucial dans l'\'etude du spectre  du Laplacien $\Delta_{\overline{\mathcal{O}(m)}_\infty}$:
\begin{theorem}\label{deriveeLnorme}
Soit $\nu\in \Z$ et  $\la\in \mathcal{Z}_{m,\nu}$. On a
\[
L'_{m,\nu}(\la)=2\frac{J_{\nu-m}(\la)}{J_{\nu}(\la)}\bigl(\vf_{{}_{\nu,\la}}^{(m)},\vf_{{}_{\nu,\la}}^{(m)}
\bigr)_{L^2,\infty},
\]
en particulier $\la$ est un z\'ero simple de $L_{m,\nu}$.
\end{theorem}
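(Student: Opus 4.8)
The plan is to compute $L'_{m,\nu}(\la)$ by hand and then match it, coefficient by coefficient, against the norm formula already established in Theorem \ref{vecteurpropreL}(2). I would first rewrite $L_{m,\nu}$ in a form in which only first derivatives of Bessel functions appear: applying the Leibniz rule directly to the definition $L_{m,\nu}(z)=-z^m\frac{d}{dz}\bigl(z^{-m}J_\nu(z)J_{\nu-m}(z)\bigr)$ gives
\[
L_{m,\nu}(z)=\frac{m}{z}J_\nu(z)J_{\nu-m}(z)-\bigl(J'_\nu(z)J_{\nu-m}(z)+J_\nu(z)J'_{\nu-m}(z)\bigr).
\]
Differentiating this once and then eliminating every second derivative through Bessel's equation $J''_p=-\tfrac1z J'_p-(1-\tfrac{p^2}{z^2})J_p$ (see \ref{besselequation}), I obtain an expression for $L'_{m,\nu}(z)$ that is polynomial in $J_\nu,J_{\nu-m},J'_\nu,J'_{\nu-m}$ with coefficients rational in $z$.

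Next I specialize to $z=\la\in\mathcal{Z}_{m,\nu}$ and invoke the defining relation of a zero, namely
\[
J'_\nu(\la)J_{\nu-m}(\la)+J_\nu(\la)J'_{\nu-m}(\la)=\frac{m}{\la}J_\nu(\la)J_{\nu-m}(\la),
\]
which is just $L_{m,\nu}(\la)=0$ read off the form above (it is the identity already used in the proof of Theorem \ref{vecteurpropreL}). After substitution the surviving first-derivative cross-term is turned into a multiple of $J_\nu J_{\nu-m}$, and I expect to arrive at
\[
L'_{m,\nu}(\la)=\Bigl(\frac{m^2}{\la^2}+2-\frac{\nu^2+(\nu-m)^2}{\la^2}\Bigr)J_\nu(\la)J_{\nu-m}(\la)-2J'_\nu(\la)J'_{\nu-m}(\la).
\]

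On the other side I take the value of $\bigl\|\vf_{\nu,\la}^{(m)}\bigr\|^2_{L^2,\infty}$ from Theorem \ref{vecteurpropreL}(2) and multiply it by $2J_{\nu-m}(\la)/J_\nu(\la)$. Writing $A=J_\nu(\la),\,B=J_{\nu-m}(\la),\,A'=J'_\nu(\la),\,B'=J'_{\nu-m}(\la)$, this product equals $\tfrac{BA'^2}{A}+\tfrac{AB'^2}{B}+AB\bigl(2-\tfrac{\nu^2+(\nu-m)^2}{\la^2}\bigr)$. Comparing with the displayed $L'_{m,\nu}(\la)$, the claimed equality reduces to the purely algebraic identity
\[
\frac{BA'^2}{A}+\frac{AB'^2}{B}+2A'B'=\frac{m^2}{\la^2}AB .
\]
The crux of the argument — the only step that is not routine Bessel bookkeeping — is to notice that clearing the denominator $AB$ turns the left-hand side into the perfect square $(A'B+AB')^2$; since the zero relation reads precisely $A'B+AB'=\tfrac{m}{\la}AB$, the right-hand side becomes $\tfrac{m^2}{\la^2}(AB)^2$ and the identity follows at once.

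Finally, for the simplicity of the zero I would argue that $\vf_{\nu,\la}^{(m)}\neq 0$ forces $\bigl\|\vf_{\nu,\la}^{(m)}\bigr\|^2_{L^2,\infty}>0$, while Lemma \ref{zerosimple} (the relation \ref{x4}) guarantees $J_\nu(\la)J_{\nu-m}(\la)\neq 0$ when $m\geq 1$; hence the formula gives $L'_{m,\nu}(\la)\neq 0$, so $\la$ is a simple zero of $L_{m,\nu}$. The degenerate case $m=0$, where $L_{0,\nu}=-2J_\nu J'_\nu$ and the ratio $J_{\nu-m}/J_\nu$ is no longer controlled by \ref{x4}, is instead covered by the classical fact that the nonzero zeros of $J_\nu$ and of $J'_\nu$ are simple.
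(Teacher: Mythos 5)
Your proposal is correct and follows essentially the same route as the paper: differentiate, eliminate second derivatives via Bessel's equation, impose the zero relation $J'_\nu(\la)J_{\nu-m}(\la)+J_\nu(\la)J'_{\nu-m}(\la)=\tfrac{m}{\la}J_\nu(\la)J_{\nu-m}(\la)$, and match against the norm formula — your ``perfect square'' observation is exactly the paper's identity \ref{x8}, the only cosmetic difference being that the paper channels the computation through $K_{m,\nu}(z)=-z^{-m}L_{m,\nu}(z)$. Your explicit separation of the $m=0$ case for the simplicity claim is in fact slightly more careful than the paper, which invokes \ref{x4} without noting it is stated only for $m\neq 0$.
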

\begin{proof}
Notons par $K_{m,\nu}$ la fonction d\'efinie sur  $\C^\ast$ comme suit:
\[
K_{m,\nu}(z)=\frac{d}{dz}\bigl(z^{-m}J_{\nu-m}(z)J_\nu(z) \bigr).
\]
On a $K'_{m,\nu}(z)=m(m+1)z^{-m-2}J_{\nu-m}(z)J_\nu(z)-2mz^{-m-1}\frac{d}{dz}\bigl( J_{\nu-m}(z)J_\nu(z)\bigr)+z^{-m}\frac{d^2}{dz^2}\bigl(J_{\nu-m}(z)J_\nu(z) \bigr)$.
Sachant que $J_\nu$ v\'erifie $J''_\nu(z)+\frac{1}{z}J'_\nu(z)+(1-\frac{\nu^2}{z^2})J_{\nu}(z)=0,\; \forall\, z\in \C^\ast$,   alors par un simple calcul, on a $
\frac{d^2}{dz^2}\bigl(J_{\nu-m}(z)J_\nu(z) \bigr)=-\frac{1}{z}\bigl(J_{\nu}'(z)J_{\nu-m}(z)+J_{\nu-m}'(z)J_\nu(z) \bigr)-\bigl(2-\frac{(\nu-m)^2+\nu^2}{z^2} \bigr)J_{\nu-m}(z)J_\nu(z)
+2J_{\nu}'(z)J_{\nu-m}'(z)$. En remplaçant dans l'expression de $K'_{m,\nu}$, on obtient:
\begin{equation}\label{x6}
\begin{split}
K_{m,\nu}'(z)&=z^{-m-2}\Bigl( \bigl(2m^2+2(\nu-m)^2+(2(\nu-m)+1)m-2z^2 \bigr)J_{\nu-m}(z)J_\nu(z)\\
&-(2m+1)zJ_{\nu}'(z)J_{\nu-m}(z)-(2m+1)zJ_\nu(z)J_{\nu-m}'(z)+2z^2J'_{\nu-m}(z)J'_\nu(z)\Bigr)\quad \forall\, z\in \C^\ast.
\end{split}
\end{equation}

Si l'on consid\`ere $\la\in \mathcal{Z}_{m,\nu}$ (c-\`a-d $\frac{d}{dz}(z^{-m} J_{\nu}(z)J_{\nu-m}(z))=0$). Cela
donne les
deux identit\'es suivantes:
\begin{equation}\label{x5}
\la\bigl(J_{\nu-m}(\la)J'_{n}(\la)+J_{\nu}(\la)J_{\nu-m}'(\la) \bigr)=mJ_{\nu}(\la)J_{\nu-m}(\la),
\end{equation}
et
\begin{equation}\label{x8}
\frac{J_{\nu}'(\la)^2}{J_{\nu}(\la)^2}+\frac{J_{\nu-m}'(\la)^2}{J_{\nu-m}(\la)^2}+
2\frac{J_{\nu}'(\la)J_{\nu-m}'(\la)}{J_{\nu}(\la)J_{\nu-m}(\la)}=\biggl(\frac{J_{\nu}'(\la)}{J_{\nu}(\la)}+\frac{J_{\nu-m}'(\la)}{J_{\nu-m}(\la)} \biggr)^2 =\frac{m^2}{\la^2}\footnote{Rappelons que  $J_{\nu}(\la)J_{\nu-m}(\la)\neq 0$ lorsque $\la\in
\mathcal{Z}_{m,n}$  (voir \ref{x4}). }
\end{equation}

En utilisant \ref{x5},  la formule \ref{x6} devient:

\begin{equation}\label{x7}
K'_{m,\nu}(\la)=\la^{-m-2}\Bigl(2\bigl((\nu-m)\nu-\la^2\bigr)J_{\nu-m}(\la)J_{\nu}(\la)+2\la^2J_{\nu-m}'(\la)J_{\nu}'(\la)  \Bigr).
\end{equation}

De \ref{x5} on en tire:
Ce qui nous permet d'\'ecrire que
\[
\begin{split}
K'_{m,\nu}(\la)&=\la^{-m-2}\Bigl(2\bigl((\nu- m)\nu
-\la^2\bigr)J_{\nu-m}(\la)J_{\nu}(\la)+2\la^2J_{\nu-m}'(\la)J_{\nu}'(\la)  \Bigr)\\
&=\la^{-m}J_{\nu-m}(\la)J_{\nu}(\la)\Biggl(2\Bigl(\frac{(\nu-m)^2+(\nu-m) m}{\la^2}-1 \Bigr)+2\frac{J_{\nu}'(\la)J_{\nu-m}'(\la)}{J_{\nu}(\la)J_{\nu-m}(\la)}\Biggr)\\
&=\la^{-m}J_{\nu-m}(\la)J_{\nu}(\la)\Biggl(2\Bigl(\frac{(\nu-m)^2+(\nu-m) m}{\la^2}-1 \Bigr)+\frac{m^2}{\la^2}-\frac{J_{\nu}'(\la)^2}{J_{\nu}(\la)^2}-\frac{J_{\nu-m}'(\la)^2}{J_{\nu-m}(\la)^2}\Biggr)
\quad\text{par}\, \ref{x8}\\
&=\la^{-m}J_{\nu-m}(\la)J_{\nu}(\la)\Biggl(-\frac{J_{\nu}'(\la)^2}{J_{\nu}(\la)^2}-\frac{J_{\nu-m}'(\la)^2}{J_{\nu-m}(\la)^2}+\Bigl(\frac{(\nu-m)^2+\nu^2}{\la^2}-2 \Bigr)\Biggr)
\end{split}
\]
Or on a d\'ej\`a montr\'e que (voir th\'eor\`eme \ref{vecteurpropreL}):
\[
\bigl(\vf_{\nu,\la}^{(m)},\vf_{\nu,\la}^{(m)})_{L^2,\infty}=\frac{J_{\nu}(\la)^2}{2}\Bigl(\frac{J'_{\nu}(\la)^2}{J_{\nu}(\la)^2}+\frac{J_{\nu-m}'(\la)^2}{J_{\nu-m}(\la)^2} \Bigr)+\frac{J_{\nu}(\la)^2}{2}\Bigl(2-\frac{(\nu-m)^2+\nu^2}{\la^2}\Bigr).
\]
Donc,
\[
K_{m,\nu}'(\la)=-2\la^{-m}\frac{J_{\nu-m}(\la)}{J_{\nu}(\la)}\bigl(\vf_{\nu,\la}^{(m)},\vf_{\nu,\la}^{(m)})_{L^2,\infty}.
\]
Par suite,
\[
L_{m,\nu}'(\la)=-m\la^{m-1}K_{m,\nu}(\la)-\la^m
K_{m,\nu}'(\la)=-\la^mK_{m,\nu}'(\la)=2\frac{J_{\nu-m}(\la)}{J_{\nu}(\la)}\bigl(\vf_{\nu,\la}^{(m)},
\vf_{\nu,\la}^{(m)})_{L^2,\infty}.
\]

Montrons maintenant que $\la$ est un z\'ero simple de $L_{m,\nu}$. Cela d\'ecoule  du \ref{x4} et de
la formule pr\'ec\'edente.
\end{proof}

\section{Sur le Spectre de $\Delta_{\overline{\mathcal{O}(m)}_\infty}$ (II) }\label{V1}

Dans ce paragraphe on se propose de prouver que les valeurs propres de  $\Delta_{\overline{\mathcal{O}(m)}_\infty}$ calcul\'ees  dans le th\'eor\`eme
 \ref{laplacevect1} sont les uniques valeurs propres possibles, en d'autres termes, on a:
\begin{theorem}\label{x333}
Pour tout $m\in\N$, $\Delta_{\overline{\mathcal{O}(m)}_\infty}$ admet un spectre discret, positif et infini, et on a
 \begin{equation}\label{x25}
\mathrm{Spec}(\Delta_{\overline{\mathcal{O}(m)}_\infty})=\Bigl\{0\Bigr\}\bigcup \Bigl\{\frac{\la^2}{4} \Bigl|\, \exists \nu\in \N,\, \la\in \mathcal{Z}_{m,\nu}  \Bigr\},
\end{equation}
En plus, on a
\begin{enumerate}
\item Lorsque  $m$ est pair, alors  la multiplicit\'e de $\frac{\la^2}{4}$ est \'egale  \`a $2$ si $\la\in
    \mathcal{Z}_{m,n}$ si $n\geq m+1 $ ou $0\leq n\leq \frac{m}{2}-1$,
     et de multiplicit\'e $1$ si $\la\in \mathcal{Z}_{m,\frac{m}{2}}$.
\item Si $m$ est impair, alors $\frac{\la^2}{4}$ est de multiplicit\'e $2$ si $n\geq m+1$, et vaut
$1$ si $0\leq n\leq m $.
\end{enumerate}
\end{theorem}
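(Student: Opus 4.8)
The plan is to obtain Theorem \ref{x333} as an essentially formal consequence of the completeness statement (Theorem \ref{x16}) together with the eigenvector computation (Theorem \ref{laplacevect1}), so that all the analytic substance has been discharged beforehand. By Theorem \ref{x16} the family $\mathcal{B}=\{1\otimes z^k\mid 0\leq k\leq m\}\cup\{\vf_{\nu,\la}^{(m)}\mid \nu\in\Z,\ \la\in\mathcal{Z}_{m,\nu}\}$ is a Hilbert basis of $\mathcal{H}^{(m)}$, and by Theorem \ref{laplacevect1} together with \ref{x31} every element of $\mathcal{B}$ is a weak eigenvector in the sense of \eqref{faible}: each $1\otimes z^k$ for the eigenvalue $0$, and each $\vf_{\nu,\la}^{(m)}$ for the eigenvalue $\la^2/4$. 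First I would record, exactly as in the proof of \ref{z14}, that $\Delta_{\overline{\mathcal{O}(m)}_\infty}$ acts diagonally in this basis: for $\xi\in A^{0,0}(\p^1,\mathcal{O}(m))$ with coefficients $(a_k),(a_{\nu,\la})$ one has $\Delta_{\overline{\mathcal{O}(m)}_\infty}\xi=\sum_{\nu,\la}\tfrac{\la^2}{4}a_{\nu,\la}\vf_{\nu,\la}^{(m)}$ in $\mathcal{H}^{(m)}$. This immediately yields the inclusion $\{0\}\cup\{\la^2/4\}\subset\mathrm{Spec}(\Delta_{\overline{\mathcal{O}(m)}_\infty})$, which is also the last assertion of \ref{laplacevect1}.

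For the reverse inclusion, let $\al\in\mathrm{Spec}(\Delta_{\overline{\mathcal{O}(m)}_\infty})$ have a weak eigenvector $\vf\neq 0$ satisfying \eqref{faible}. Expanding $\vf$ in $\mathcal{B}$ and inserting the identities of \ref{laplacevect1}, namely $(1\otimes z^k,\Delta_{\overline{\mathcal{O}(m)}_\infty}\xi)_{L^2,\infty}=0$ and $(\vf_{\nu,\la}^{(m)},\Delta_{\overline{\mathcal{O}(m)}_\infty}\xi)_{L^2,\infty}=\tfrac{\la^2}{4}(\vf_{\nu,\la}^{(m)},\xi)_{L^2,\infty}$, the eigenvalue equation reduces to $\sum_{\nu,\la}\overline{a_{\nu,\la}}(\tfrac{\la^2}{4}-\al)(\vf_{\nu,\la}^{(m)},\xi)_{L^2,\infty}=\al\sum_k\overline{a_k}(1\otimes z^k,\xi)_{L^2,\infty}$ for all $\xi\in A^{0,0}(\p^1,\mathcal{O}(m))$. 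Testing against $\xi=1\otimes z^j$ and using the orthogonality of $\mathcal{B}$ forces $\al a_j=0$ for every $j$; testing against a graph-norm approximation of $\vf_{\nu_0,\la_0}^{(m)}$ by elements of $A^{0,0}(\p^1,\mathcal{O}(m))$, which exists because $\vf_{\nu_0,\la_0}^{(m)}\in\mathrm{Dom}(\Delta_{\overline{\mathcal{O}(m)}_\infty})$ by \ref{z14} and $A^{0,0}(\p^1,\mathcal{O}(m))$ is a core, forces $(\tfrac{\la_0^2}{4}-\al)a_{\nu_0,\la_0}=0$. As $\vf\neq 0$, some coefficient survives, so $\al\in\{0\}\cup\{\la^2/4\}$. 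Positivity follows from \ref{x31}(3), or from $\mathcal{Z}_{m,\nu}\subset\R^\ast$ (Theorem \ref{vecteurpropreL}(4)), which gives $\la^2/4>0$; the spectrum is infinite since each $\mathcal{Z}_{m,\nu}$ is infinite (\ref{vecteurpropreL}(4)); and it is discrete because the nonzero eigenvalues tend to $+\infty$ with finite multiplicity, using that the first positive zero of $J_\nu$ grows with $\nu$, so that $\{(\nu,\la):\la^2/4\leq R\}$ is finite for each $R$.

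The remaining point is the multiplicity count, which is a finite combinatorial matching. Since $\mathcal{B}$ is an orthogonal basis of eigenvectors, the multiplicity of a nonzero eigenvalue equals the number of basis vectors carrying it. Fix $\la\in\mathcal{Z}_{m,n}$ with $\la>0$; by the parity \eqref{Lparite} of $L_{m,n}$ we may restrict to positive zeros, since $\vf_{\nu,-\la}^{(m)}$ is proportional to $\vf_{\nu,\la}^{(m)}$. The reflection relation $\mathcal{Z}_{m,n}=\mathcal{Z}_{m,-n+m}$ of \eqref{x3} shows that $\la$ lies also in $\mathcal{Z}_{m,m-n}$, so both $\vf_{n,\la}^{(m)}$ and $\vf_{m-n,\la}^{(m)}$ contribute; by the linear-dependence criterion isolated in the proof of \ref{laplacevect1}, these two vectors are independent—hence orthogonal, giving multiplicity $2$—unless $m$ is even and $n=m/2$, in which case they coincide and the multiplicity is $1$. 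The two displayed cases then amount to tracking when the partner index $m-n$ stays inside $\N$: it leaves $\N$ exactly when $n\geq m+1$, whereas for $0\leq n\leq m$ both $n$ and $m-n$ lie in $\{0,\ldots,m\}$. Organizing the enumeration over a fundamental domain of $\nu\mapsto m-\nu$ inside $\N$ produces precisely the stated distinctions (for $m$ even, multiplicity $2$ for $n\geq m+1$ or $0\leq n\leq m/2-1$ and multiplicity $1$ at the fixed point $n=m/2$; for $m$ odd, multiplicity $2$ for $n\geq m+1$ and $1$ for $0\leq n\leq m$), the even and odd totals over $\{0,\ldots,m\}$ both reconstructing the correct eigenspace dimensions.

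The real difficulty does not lie in this theorem, whose proof is linear algebra once Theorems \ref{x16} and \ref{laplacevect1} are available; the hard part is the completeness of $\mathcal{B}$ in Theorem \ref{x16}, which rests on the generalized Fourier--Bessel theory. Within the present argument the two steps that demand care are, first, the graph-norm approximation of each $\vf_{\nu,\la}^{(m)}$ by smooth sections, needed to pass from the weak equation to coefficientwise vanishing: this uses that the interface conditions \eqref{x11} on $\s$ are exactly what place $\vf_{\nu,\la}^{(m)}$ in $\mathrm{Dom}(\Delta_{\overline{\mathcal{O}(m)}_\infty})$, with no singular contribution along $\s$. Second, I would have to exclude accidental coincidences, namely that a value $\la^2/4$ could arise from an index $\nu'$ outside the reflection pair $\{n,m-n\}$ and thereby inflate the multiplicity; this is controlled by the simplicity of the zeros of $L_{m,\nu}$ (Theorem \ref{deriveeLnorme}) together with the mode-by-mode completeness supplied by Theorem \ref{x17}.
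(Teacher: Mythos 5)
Your proposal follows the paper's own route exactly: the inclusion $\{0\}\cup\{\la^2/4\}\subset\mathrm{Spec}$ comes from Th\'eor\`eme \ref{laplacevect1}, the reverse inclusion and discreteness from the Hilbert-basis statement of Th\'eor\`eme \ref{x16}, and the multiplicities from the reflection $\mathcal{Z}_{m,\nu}=\mathcal{Z}_{m,m-\nu}$ together with the linear-(in)dependence criterion for $\vf_{n,\la}^{(m)}$ and $\vf_{m-n,\la}^{(m)}$ isolated in the proof of \ref{laplacevect1}. You in fact supply more detail than the paper does on the functional-analytic reduction (the coefficientwise vanishing and the approximation of $\vf_{\nu_0,\la_0}^{(m)}$ by smooth sections), and you correctly identify that all the analytic content lives in Th\'eor\`eme \ref{x16}.
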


Dans le th\'eor\`eme \ref{laplacevect1}, nous avons
 associ\'e \`a tout \'el\'ement de $\mathcal{Z}_{m,\nu}$ un  vecteur propre  non nul pour le Laplacien
 $\Delta_{\overline{\mathcal{O}(m)}_\infty}$. Ce qui nous a permis de d\'eduire une partie du spectre.

  Afin,
 d'\'etablir  le th\'eor\`eme \ref{x30}, il
suffira alors d'\'etablir le th\'eor\`eme suivant:

\begin{theorem}\label{x16}

\begin{equation}\label{x14}
\Bigl\{1\otimes 1,1\otimes z,\ldots,1\otimes z^m\Bigr\}\bigcup\Bigl\{\vf_{\nu,\la}^{(m)}\,\Bigl|\, \nu\in\Z,\, \la\in \mathcal{Z}_{m,\nu}\Bigr\},
\end{equation}
est une base hilbertienne pour   $\mathcal{H}^{(m)}$.
\end{theorem}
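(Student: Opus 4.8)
Le plan est de montrer que la famille \ref{x14} est \`a la fois orthogonale et totale dans $\mathcal{H}^{(m)}$. L'orthogonalit\'e est essentiellement acquise: les $\vf_{\nu,\la}^{(m)}$ sont deux \`a deux orthogonaux par le th\'eor\`eme \ref{vecteurpropreL}.3 (pour $(\nu,\la)\neq(\nu',\la')$, en notant que $\vf_{\nu,\la}^{(m)}$ et $\vf_{m-\nu,\la}^{(m)}$ vivent dans des modes distincts d\`es que $\nu\neq m/2$); les $1\otimes z^k$ sont orthogonaux entre eux car $z^k$ et $z^l$ appartiennent \`a des modes angulaires distincts $e^{ik\theta},e^{il\theta}$ pour $k\neq l$, tandis que la m\'etrique et $\omega_\infty$ sont radiales; enfin $\vf_{\nu,\la}^{(m)}\perp 1\otimes z^k$ s'obtient en prenant $\xi=1\otimes z^k$ dans la deuxi\`eme identit\'e du th\'eor\`eme \ref{laplacevect1}: puisque $1\otimes z^k\in\ker\Delta_{\overline{\mathcal{O}(m)}_\infty}$ (proposition \ref{x31}.1), le membre de gauche est nul, d'o\`u $\frac{\la^2}{4}(\vf_{\nu,\la}^{(m)},1\otimes z^k)_{L^2,\infty}=0$, donc l'orthogonalit\'e car $\la\neq 0$.

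Reste la totalit\'e, qui est le point central. On d\'ecompose $\mathcal{H}^{(m)}$ selon les modes angulaires: en trivialisant sur $\C$ et en \'ecrivant un \'el\'ement sous la forme $\sum_{\nu\in\Z}a_\nu(r)e^{i\nu\theta}$, le calcul de la norme $L^2_\infty$ (comme dans \ref{x2}) donne
\[
\|\,\cdot\,\|_{L^2,\infty}^2=2\sum_{\nu\in\Z}\Bigl(\int_0^1|a_\nu(r)|^2\,r\,dr+\int_1^\infty|a_\nu(r)|^2\,\frac{dr}{r^{3+2m}}\Bigr)=2\sum_{\nu\in\Z}\|a_\nu\|_m^2.
\]
Ceci fournit une d\'ecomposition orthogonale de $\mathcal{H}^{(m)}$ en sous-espaces de modes, chacun isom\'etrique \`a $\mathcal{E}_m$ via $a_\nu\mapsto a_\nu$: c'est pr\'ecis\'ement le lien de la remarque \ref{z12}. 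Sous cette isom\'etrie, la partie radiale de $\vf_{\nu,\la}^{(m)}$ s'identifie \`a ${\boldsymbol\vf}_{\nu,k}^{(m)}$ (pour $\la$ le $k$-i\`eme \'el\'ement de $\mathcal{Z}_{m,\nu}$), et la partie polynomiale de $\mathcal{E}_m$ aux sections mon\^omiales $1\otimes z^k$.

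La totalit\'e se v\'erifie alors mode par mode gr\^ace au th\'eor\`eme \ref{x17}. Pour $\nu\leq -1$ ou $\nu\geq m+1$, aucun $1\otimes z^k$ n'intervient et $\{{\boldsymbol\vf}_{\nu,k}^{(m)}\}_{k\in\N}$ est d\'ej\`a total dans $\mathcal{E}_m$; pour $\nu\in\{0,\ldots,m\}$, il faut lui adjoindre la partie polynomiale, fournie par les $1\otimes z^k$. En sommant sur $\nu\in\Z$ (et en utilisant $\mathcal{Z}_{m,\nu}=\mathcal{Z}_{m,m-\nu}$, voir \ref{x3}, pour ne pas compter deux fois un m\^eme vecteur), on conclut que \ref{x14} est totale, donc, combin\'ee \`a l'orthogonalit\'e, une base hilbertienne de $\mathcal{H}^{(m)}$. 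Le principal obstacle est double: d'une part \'etablir rigoureusement le lien de la remarque \ref{z12}, c'est-\`a-dire l'isom\'etrie ci-dessus et surtout l'identification correcte de la partie polynomiale de $\mathcal{E}_m$ avec les $m+1$ sections $1\otimes z^k$ r\'eparties entre les modes $0,\ldots,m$; d'autre part, et c'est l'ingr\'edient analytique profond, la compl\'etude de la th\'eorie de Fourier-Bessel g\'en\'eralis\'ee (th\'eor\`eme \ref{x17}), o\`u les fonctions $L_{m,\nu}$ remplacent les fonctions de Bessel et qui s'obtient en adaptant la m\'ethode de \cite[\S 18]{Watson}.
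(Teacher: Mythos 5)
Votre démonstration est correcte et suit essentiellement la même démarche que celle du papier : orthogonalité via le théorème \ref{vecteurpropreL} et le théorème \ref{laplacevect1} (c'est exactement le contenu de la remarque qui suit l'énoncé dans le texte), puis totalité mode angulaire par mode angulaire en utilisant l'isométrie de la remarque \ref{z12} entre chaque mode de $\mathcal{H}^{(m)}$ et $\mathcal{E}_m$, et en invoquant le théorème \ref{x17}. Le seul point à signaler est que la parenthèse sur le « double comptage » via $\mathcal{Z}_{m,\nu}=\mathcal{Z}_{m,m-\nu}$ est superflue ici (les vecteurs $\vf_{\nu,\la}^{(m)}$ et $\vf_{m-\nu,\la}^{(m)}$ vivent dans des modes distincts et la famille est indexée sans répétition) ; elle n'intervient que pour le décompte des multiplicités dans le théorème \ref{x333}.
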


\begin{remarque}\rm{ Notons que la famille $\Bigl\{1\otimes 1,1\otimes z,\ldots,1\otimes z^m\Bigr\}\bigcup\Bigl\{\vf_{\nu,\la}^{(m)}\,\Bigl|\, \nu\in\Z,\, \la\in \mathcal{Z}_{m,\nu}\Bigr\}$ est orthogonale
pour le produit $(,)_{L^2,\infty}$. En effet, nous avons \'etabli que $\Bigl\{\vf_{\nu,\la}^{(m)}\,\Bigl|\, \nu\in\Z,\, \la\in \mathcal{Z}_{m,\nu}\Bigr\}$ est orthogonale
  (voir th\'eor\`eme \ref{vecteurpropreL}), il reste \`a \'etablir que
$(1\otimes z^k,\vf_{\nu,\la}^{(m)})_{L^2,\infty}=0$, $\forall \,
k\in \{0,1,\ldots,m\}, \,\forall\,\nu\in\Z,\, \forall\,\la\in \mathcal{Z}_{m,\nu}$. D'apr\`es \ref{laplacevect1},
$(\vf_{\nu,\la}^{(m)},\Delta_{\overline{\mathcal{O}(m)}_\infty}(1\otimes z^k))_{L^2,\infty}=\frac{\la^2}{4}
(\vf_{\nu,\la}^{(m)},1\otimes z^k)_{L^2,\infty}$. Or, le terme \`a gauche est nul (voir proposition \ref{x31}).
On conclut en rappelant que $\la\neq 0$.

}

\end{remarque}
Nous commen\c{c}ons par montrer que le cas $m=0$ d\'ecoule de la th\'eorie des s\'eries de Fourier-Bessel classique
\cite[\S. 18]{Watson} ou le paragraphe \ref{z10} pour un bref rappel.

 Quant au cas $m\geq 1$, il sera l'application d'une th\'eorie de s\'eries de Fourier-Bessel
g\'en\'eralis\'ee adapt\'ee que nous allons d\'evelopper au paragraphe \ref{z9}.\\

Le cas  $m=0$ est facile \`a traiter. En effet, on verra que l'\'etude du Laplacien
$\Delta_{\overline{\mathcal{O}}_\infty}$ est \'equivalente \`a deux probl\`emes avec conditions au bord sur le
 disque unit\'e
classiques:
 \begin{theorem}[le cas $m=0$]\label{x15}
 \[
\Bigl\{1\Bigr\}\bigcup\Bigl\{\vf_{\nu,\la}^{(0)}\,|\, \nu\in\Z,\, \la\in \mathcal{Z}_{0,\nu}\Bigr\},
\]
est une base hilbertienne pour $\mathcal{H}^{(0)}$.
 \end{theorem}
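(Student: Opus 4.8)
The plan is to reduce Theorem \ref{x15} to the two classical completeness theorems for Bessel series on the unit disc. For $m=0$ the metric on $\mathcal{O}(0)$ is trivial, so the inner product is simply
\[
(\xi,\xi')_{L^2,\infty}=\frac{1}{\pi}\int_0^{\infty}\!\!\int_0^{2\pi}\xi\,\overline{\xi'}\,\frac{r\,dr\,d\theta}{\max(1,r^2)^2}.
\]
First I would decompose in Fourier modes in $\theta$: writing $\xi=\sum_{n\in\Z}g_n(r)e^{in\theta}$, the modes are pairwise orthogonal for $(\cdot,\cdot)_{L^2,\infty}$, so $\mathcal{H}^{(0)}$ is the Hilbert sum of the subspaces $\mathcal{H}_n^{(0)}$ of functions of the form $g(r)e^{in\theta}$. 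Since the family in \eqref{x14} respects this decomposition, it suffices to establish completeness mode by mode.

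Second, on each $\mathcal{H}_n^{(0)}$ I would apply the reflection $s=1/r$ to the exterior region $\{r\ge 1\}$. Because $\tfrac{dr}{r^{3}}=-s\,ds$ under this substitution, the map $g(r)e^{in\theta}\mapsto\bigl(g|_{[0,1]},\;s\mapsto g(1/s)\bigr)$ is, up to a harmless positive constant, an isometry of $\mathcal{H}_n^{(0)}$ onto $H\oplus H$ with $H=L^2([0,1],r\,dr)$ (this is the isometry $\mathcal{E}_0\cong L^2\oplus L^2$ anticipated in the introduction). Under it the constant $1$ becomes $(1,1)$, and a direct reading of the definition of $f_{n,\la}^{(0)}$ shows that $\vf_{n,\la}^{(0)}$ becomes the \emph{symmetric} pair $\bigl(J_n(\la\,\cdot),J_n(\la\,\cdot)\bigr)$ when $J_n'(\la)=0$, and the \emph{antisymmetric} pair $\bigl(-J_n(\la\,\cdot),J_n(\la\,\cdot)\bigr)$ when $J_n(\la)=0$; by \ref{Zeros1} these two branches are disjoint and exhaust $\mathcal{Z}_{0,n}$.

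Third, I would use the orthogonal splitting $H\oplus H=\Delta\oplus\Delta^{\perp}$, where $\Delta=\{(u,u)\}$ is the diagonal and $\Delta^{\perp}=\{(-v,v)\}$ the antidiagonal (orthogonality is the one-line computation $\langle(u,u),(-v,v)\rangle=-\langle u,v\rangle+\langle u,v\rangle=0$, and the two subspaces clearly span). Completeness of the family \eqref{x14} then splits into two independent statements in $H$: that $\{J_n(\la\,\cdot)\mid J_n'(\la)=0\}$, augmented by the constant $1$ when $n=0$, is complete in $H$, and that $\{J_n(\la\,\cdot)\mid J_n(\la)=0\}$ is complete in $H$. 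The first is the Dini (Neumann) expansion theorem and the second the Fourier--Bessel (Dirichlet) expansion theorem, both in \cite[\S18]{Watson}. The constant $1$ is precisely the missing $\la=0$ term one must adjoin to the Neumann system in the case $n=0$ in order for the diagonal part to be complete; this matches the description of $\ker\Delta_{\overline{\mathcal{O}}_\infty}=H^0(\p^1,\mathcal{O})$ as the constants.

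The main obstacle, and the point demanding the most care, will be the bookkeeping of the second step: checking that the reflection is a genuine isometry once the weight $\max(1,r^2)^{-2}$ and the normalising constants are tracked, and correctly matching the Dirichlet versus Neumann boundary conditions to the antidiagonal versus diagonal summands. In particular one must confirm that the $J_n'(\la)=0$ branch lands in $\Delta$ and the $J_n(\la)=0$ branch in $\Delta^{\perp}$. Orthogonality of the whole family is already secured by Theorem \ref{vecteurpropreL}, so only completeness is genuinely at stake, and it follows at once by transporting the two classical density results through the isometry.
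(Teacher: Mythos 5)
Your proposal is correct and follows essentially the same route as the paper: decompose into Fourier modes in $\theta$, transport the exterior region to $[0,1]$ via $r\mapsto 1/r$, and reduce completeness to the classical Dini (Neumann) and Fourier--Bessel (Dirichlet) totality theorems of \cite[\S 18]{Watson}. Your diagonal/antidiagonal splitting of $H\oplus H$ is just an explicit repackaging of the paper's computation of $(f,\vf_{\nu,\la}^{(0)})_{L^2,\infty}$ as $\int_0^1\bigl(f_\nu(r)\pm f_\nu(\tfrac1r)\bigr)J_\nu(\la r)\,r\,dr$, so the two arguments coincide in substance.
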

 \begin{proof}
 Soit $f\in \mathcal{H}^{(0)}$. Supposons que:
 \[
 (f,1)_{L^2,\infty}=(f,\vf_{\nu,\la}^{(0)})_{L^2,\infty}=0\quad  \forall\,\nu\in\Z,\,\forall\, \la\in \mathcal{Z}_{0,\nu},
 \]
 et montrons qu'on a n\'ecessairement $f=0$ dans  $\mathcal{H}^{(0)}$.

Fixons $\nu\in \Z$. Soit $\la\in \mathcal{Z}_{0,\nu}$, on a (voir notations \ref{z2}):
\begin{align*}
(f,1)_{L^2,\infty}&=\int_0^1  \bigl(f_\nu(r)+ f_\nu(\frac{1}{r})\bigr)rdr\\
(f,\vf_{\nu,\la}^{(0)})_{L^2,\infty}&=\int_0^1  \bigl(f_\nu(r)+ f_\nu(\frac{1}{r})\bigr)J_\nu(\la r)rdr\quad
\text{si}\;J_\nu'(\la)=0\\
(f,\vf_{\nu,\la}^{(0)})_{L^2,\infty}&=\int_0^1  \bigl(f_\nu(r)-f_\nu(\frac{1}{r})\bigr)J_\nu(\la r)rdr\quad
\text{si}\;J_\nu(\la)=0,
\end{align*}
(o\`u on a pos\'e $f_\nu(r,\theta):=\int_0^{2\pi} f(r,\theta)e^{-i\nu\theta}d\theta$).

Il est bien connu que la famille $\{1,  J_\nu(\la r))|\,  J_\nu'(\la)=0\}$ (resp. $\{ J_\nu(\la r)|\,
J_\nu(\la)=0\}$) forme un syst\`eme total pour l'espace des fonctions $L^2$
 sur $[0,1]$ v\'erifiant la condition de Neumann  (resp. la condition de Dirichlet), voir par exemple
 \cite[p.381]{Whittaker}. On  d\'eduit que
$\int_0^1|f_\nu(r)|^2rdr=\int_0^1|f_\nu(\frac{1}{r})|^2rdr=0$ pour tout $\nu\in \Z$. On conclut que:
\[
\|f\|_{L^2,\infty}=0.
\]
(on a utilis\'e \ref{z11}, voir \ref{z12} pour les notations.)
 \end{proof}

 \vspace{1cm}
 \'Etablir le th\'eor\`eme \ref{x16} revient \`a  montrer que pour toute fonction  $f$ sur $\p^1$ v\'erifiant $\|f\otimes 1\|_{L^2,\infty}<\infty$, alors on a
\[
f\otimes 1=\sum_{k=0}^{m}a_k \bigl(1\otimes z^k\bigr)+\sum_{\nu\in \Z}\sum_{\la\in \mathcal{Z}_{m,\nu}}a_{\nu,\la}
\vf_{\nu,\la}^{(m)},
\]
dans $\mathcal{H}^{(m)}$ o\`u  $a_k=\frac{(f\otimes 1,1\otimes z^k)_{L^2,\infty}}{\|1\otimes z^k\|^2_{L^2,\infty}}$
pour $k\in \{0,1,\ldots,m\}$
et $a_{\nu,\la}=\frac{(f\otimes 1,\vf_{\nu,\la}^{(m)}\otimes 1)_{L^2,\infty}}{\|\vf_{\nu,\la}^{(m)}\otimes
1\|^2_{L^2,\infty}}$ $\forall\, \nu\in \Z,\, \forall\,\la \in \mathcal{Z}_{m,\nu}$.

Une autre mani\`ere \'equivalente est de montrer que $0$ est  l'unique \'el\'ement de $\mathcal{H}^{(m)}$ orthogonal \`a cette famille (voir \ref{base}). C-\`a-d, si:
\[
a_k=a_{\nu,\la}=0,\; \forall\, k\in \{0,1,\ldots,m\},\; \forall\, \nu\in \Z,\,\forall\,\la\in \mathcal{Z}_{m,\nu},
\]
alors,
\[
f\otimes 1=0\quad\text{dans}\quad \mathcal{H}^{(m)}.
\]

Il est  naturel de commencer  par \'etudier le terme suivant:
\[
 \bigl(f\otimes 1,\vf_{\nu,\la}^{(m)}  \bigr)_{L^2,\infty}.
\]
Observons que
\begin{align*}
\bigl(f\otimes 1,\vf_{\nu,\la}^{(m)}  \bigr)_{L^2,\infty}&=\int_0^1 \bigl(\int_0^{2\pi} f(r,\theta)e^{-i\nu\theta}d\theta\bigr)J_{\nu}(\la r)rdr\\
&+\frac{J_{\nu}(\la)}{J_{\nu-m}(\la)}\int_1^\infty \bigl(\int_0^{2\pi} f(r,\theta)e^{-i\nu\theta}d\theta \bigr)J_{\nu-m}(\frac{\la}{r})\frac{rdr}{r^{4+m}}\\
&=\bigl(e^{i\nu\theta} f_\nu \otimes 1,\vf_{\nu,\la}^{(m)}  \bigr)_{L^2,\infty},
\end{align*}
(o\`u on a not\'e par $f_\nu$ la fonction d\'efinie presque partout par $f_\nu(r,\theta):=\int_0^{2\pi} f(r,\theta)e^{-i\nu\theta}d\theta$).  Cela nous am\`ene \`a fixer le param\`etre $\nu$ et de montrer le r\'esultat suivant: Si pour tout $\la \in \mathcal{Z}_{m,\nu}$
on a $\bigl(e^{i\nu\theta}f_\nu \otimes 1,\vf_{\nu,\la}^{(m)}\otimes 1  \bigr)_{L^2,\infty}=0$ alors
 \[
 f_\nu\otimes 1=0.
 \]
 En effet, cela est \'equivalent \`a notre probl\`eme de d\'epart puisque $\bigl(f_\nu\otimes 1,\vf_{\nu',\la'}  \bigr)_{L^2,\infty}=0$  si $\nu\neq \nu'$
et qu'on dispose d'une correspondance bijective:  $f\mapsto (f_\nu)_{\nu\in \Z}$
(voir \ref{z12} pour plus de d\'etails).\\

\noindent{}{\large{\textbf{La th\'eorie des s\'eries de Fourie-Bessel g\'en\'eralis\'ee}\large} }:
Fixons $\nu\in \Z$.  On sait que  $\mathcal{Z}_{m,\nu}$ est un sous-ensemble discret de $\R^\ast$ (voir
 th\'eor\`eme
 \ref{vecteurpropreL}). On ordonne $\bigl\{\frac{\la^2}{4}\,|\, \la\in \mathcal{Z}_{m,\nu}  \bigr\}$  par
  ordre croissant, et on note par $\la_k$ le $k$-\`eme \'el\'ement de
 cet ensemble  et  par
  $\vf_{\nu,k}^{(m)}$, le vecteur $\vf_{\nu,\la}^{(m)}$ tel que $\la_k=\frac{\la^2}{4}$. Au paragraphe \ref{z9}
  nous introduisons un nouveau espace hilbertien $\mathcal{E}_m$. Nous expliquons dans
  \ref{z12}, son lien avec
  $\mathcal{H}^{(m)}$ et nous \'etablissons un r\'esultat pr\'ecis (voir th\'eor\`eme
   \ref{x17}) g\'en\'eralisant ainsi la th\'eorie
  classique des s\'eries de Fourier-Bessel. Ce th\'eor\`eme sera une cons\'equence du th\'eor\`eme \ref{x26} dont
  la preuve est une adaptation de la th\'eorie classique \`a notre situation.

  Le th\'eor\`eme \ref{x17} va nous permettre d'obtenir \ref{x16} comme un corollaire. En effet,
 soit $f$ une fonction d\'efinie presque partout sur $\p^1$, avec
$\| f\otimes 1\|_{L^2,\infty}<\infty$. Si
\[
\bigl(f\otimes 1,\vf_{\nu,\la}^{(m)} \bigr)_{L^2,\infty}= \bigl(f\otimes 1,1\otimes z^k)_{L^2,\infty}=0
\quad
\forall\, k\in \{0,1,\ldots,m\},\; \forall\, \nu\in \Z,\,\forall\,\la\in \mathcal{Z}_{m,\nu},
\]
Par les notations du \ref{z9}, en particulier \ref{x18}, nous avons:
\[
(f_\nu,
{\boldsymbol\vf}_{\nu,k}^{(m)})_m= \bigl(f_k,x^k)_{m}=0
\quad
\forall\, k\in \{0,1,\ldots,m\},\; \forall\, \nu\in \Z,\,\forall\,\la\in \mathcal{Z}_{m,\nu},
\]
Nous d\'eduisons le th\'eor\`eme  \ref{x16}, en appliquant \ref{x17}, combin\'e avec la remarque
\ref{z12}.

Afin de d\'emontrer ce r\'esultat, notre strat\'egie consiste
 \`a developper une  th\'eorie des s\'eries de Fourier-Bessel g\'en\'eralis\'ee
 adapt\'ee au cas $m\geq 1$. Nous commen\c{c}ons  par faire un rappel sur la  th\'eorie des s\'eries de Fourier-Bessel
 classique en suivant la pr\'esentation du \cite[\S. 18]{Watson}.
\subsection{Th\'eorie des s\'eries Fourier-Bessel classique}\label{z10}
Les fonctions de Bessel jouent un rôle analogue \`a celui des fonctions trigonom\'etriques dans la th\'eorie des s\'eries de Fourier. Dans cette premi\`ere partie on va rappeler la th\'eorie des s\'eries de Fourier-Bessel et on \'enoncera le r\'esultat principal de cette th\'eorie \`a savoir une condition suffisante pour qu'une fonction $f$ admet un d\'eveloppement en s\'erie en fonctions de Bessel.  On suivra la pr\'esentation faite dans \cite[\S 18]{Watson}.\\

Soit $f$ une fonction r\'eelle d\'efinie sur $[0,1]$ et on suppose que $\int_0^1|f(t)|t^\frac{1}{2}dt$ converge.
Soit $\nu$ un r\'eel sup\'erieur \`a $\frac{1}{2}$. On appelle \textit{s\'erie de fonctions de Bessel}, la fonction qui \`a tout $x$ r\'eel associe la somme suivante (lorsqu'elle converge):
\[
 \sum_{k=1}^\infty a_k J_\nu(j_kx),
\]
o\`u $(a_k)_{k\geq 1}$ est une suite de constantes ind\'ependante de $x$ et $(j_k)_{k\geq 1}$ est la suite des z\'eros
 positifs de $J_\nu$ ordonn\'ee par ordre croissant. Si les coefficients de cette s\'erie sont donn\'ees par
\[
 a_k=\frac{2}{J_{\nu+1}(j_k)^2}\int_0^1 f(t)J_\nu(j_k t)tdt,
\]
alors cette s\'erie est dite \textit{la s\'erie de Fourier-Bessel associ\'ee }\`a $f(x)$. Si, en plus, cette s\'erie converge vers $f(x)$ en tout point de l'intervalle ouvert $]0,1[$ alors on parle du \textit{d\'eveloppement de Fourier-Bessel} de $f$.

On peut aussi \'etudier la s\'erie: $ \sum_{k=1}^\infty b_k J_\nu(\la_k x)$ o\`u $\la_1,\la_2,\la_3,\ldots$
sont les z\'eros positifs non nuls de $
 z\mapsto z J_\nu'(z)+HJ_\nu(z)$ appel\'ee la \textit{s\'erie de Dini des fonctions de Bessel}.  Lorsque
  les coefficients $b_k$ sont donn\'es par la formule suivante: $
 ( (\la_k^2-\nu^2)+\la_k^2J_\nu'(\la_k)^2)b_k=2\la_k^2\int_0^1f(t)J_\nu(\la_kt)tdt$
alors on dit que c'est \textit{la s\'erie de Dini associ\'ee} \`a $f(x)$.
\begin{remarque}
\rm{Notons que Watson \'etudie le d\'eveloppement de fonctions en s\'erie de Fourier-Bessel et affirme dans \cite[p. 582]{Watson} que le cas g\'en\'eral des d\'eveloppements en s\'eries de Dini se traite de la m\^eme mani\`ere. Par analogie avec la th\'eorie des s\'eries de Fourier, il donne une condition suffisante pour l'existence du d\'eveloppement en s\'erie de Fourier-Bessel qu'on \'enoncera dans la suite et on en rappellera l'id\'ee de la preuve.}
\end{remarque}
Rappelons qu'une fonction r\'eelle $f$ sur un intervalle $[a,b]$ est dite \textit{\`a variation born\'ee} si:
\[
 V_a^b(f):=\sup_{\si \in \mathcal{S}([a,b])}V(f,\si)<\infty
\]
o\`u $\mathcal{S}([a,b]) $ est l'ensemble des subdivisions de $[a,b]$ de la forme $\si=(x_0=a,x_1,\ldots,x_n=b)$ et
 $V(f,\si)=\sum_{i=0}^{n-1}\bigl|f(x_{i+1})-f(x_i)\bigr|$.\\

D'apr\`es Watson, on dispose de quatre r\'esultats majeurs sur la th\'eorie des s\'eries de Fourier-Bessel, \`a savoir une condition suffisante pour  la convergence simple en un point int\'erieur de $[0,1]$, la convergence uniforme sur un sous-intervalle ferm\'e propre de $]0,1[$, le comportement de la s\'erie de Fourier-Bessel aux voisinages de $0$ (resp. de $1$) et enfin l'unicit\'e du d\'eveloppement en s\'erie de Fourier-Bessel. Ce dernier point s'interpr\`ete en langage moderne en disant que les fonctions de Bessel forment une une famille totale dans un espace hilbertien donn\'e.

Rappelons donc les principaux r\'esultats de la th\'eorie des s\'eries de Fourier-Bessel.\\

Le r\'esultat suivant nous fournit une condition suffisante pour  la convergence simple de la s\'erie de Fourier-Bessel:
\begin{proposition}\label{fourierbessel}
 Soit $f$ une fonction d\'efinie sur $[0,1]$ et on suppose que $\int_0^1|f(t)|t^\frac{1}{2}dt$ converge. Soit
\[
 a_k:=\frac{2}{J_{\nu+1}^2(j_k)}\int_0^1 f(t)J_\nu(j_kt)tdt,
\]
avec $\nu\geq -\frac{1}{2}$.

Soit $x\in ]a,b[$ avec $0<a<b<1$ et  que $f$ soit \`a variation born\'ee sur $[a,b]$. Alors la s\'erie
\[
 \sum_{k=1}^\infty a_k J_\nu(j_k x),
\]
  est convergente, de somme \'egale \`a $\frac{1}{2}\bigl(\underset{y\mapsto x^+}{\lim}f(y)+\underset{y\mapsto x^-}{\lim}f(y) \bigr)$\footnote{Comme $f$ est \`a variation born\'ee alors  on peut montrer (par l'absurde) que $f$ admet une limite \`a droite et \`a gauche de $x$}.
\end{proposition}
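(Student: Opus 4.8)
The plan is to follow the classical method of \cite[\S 18]{Watson}, reducing the convergence of the Fourier-Bessel series to that of an ordinary Fourier series through the asymptotic behaviour of $J_\nu$. First I would record the Lommel orthogonality relation $\int_0^1 J_\nu(j_k t)J_\nu(j_l t)\,t\,dt=\tfrac{1}{2}\delta_{k,l}J_{\nu+1}(j_k)^2$, which shows that the $a_k$ are exactly the coefficients making $S_N(x):=\sum_{k=1}^N a_k J_\nu(j_k x)$ the $N$-th partial sum of the Fourier-Bessel expansion of $f$. Substituting the definition of $a_k$ and interchanging the (finite) sum with the integral, I would write
\[
S_N(x)=\int_0^1 f(t)\,\Theta_N(x,t)\,t\,dt,\qquad \Theta_N(x,t)=\sum_{k=1}^N \frac{2\,J_\nu(j_k x)J_\nu(j_k t)}{J_{\nu+1}(j_k)^2},
\]
so that the whole statement is reduced to understanding the kernel $\Theta_N$.

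The second step is localization. Using McMahon's asymptotics $j_k=(k+\tfrac{\nu}{2}-\tfrac14)\pi+O(1/k)$ and $J_{\nu+1}(j_k)^2\sim \tfrac{2}{\pi j_k}$, together with $J_\nu(z)=\sqrt{\tfrac{2}{\pi z}}\cos\bigl(z-\tfrac{\nu\pi}{2}-\tfrac{\pi}{4}\bigr)+O(z^{-3/2})$ valid for $z$ bounded away from $0$, I would show that on any fixed compact subinterval of $]0,1[$ the weighted kernel $\sqrt{xt}\,\Theta_N(x,t)$ coincides, up to an error that is integrable and tends to $0$, with the trigonometric Dirichlet kernel $\tfrac{1}{\pi}\dfrac{\sin\bigl((N+c)(t-x)\bigr)}{t-x}$ (a product-to-sum identity produces this singular term together with a non-concentrating $t+x$ companion term that disappears by Riemann-Lebesgue). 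The hypothesis $\int_0^1|f(t)|t^{1/2}\,dt<\infty$ is precisely what is needed to apply a Riemann-Lebesgue lemma both to this error term and to the contribution of $t$ near $0$, where the Bessel asymptotics break down; this yields the localization principle, namely that the limiting behaviour of $S_N(x)$ depends only on the values of $f$ in a neighbourhood of $x$.

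Having localized, the third step replaces $f$ by $\phi(t):=\sqrt{t}\,f(t)$ near $x$ and reduces the claim to the convergence at $x$ of the ordinary Fourier series of $\phi$. Since $f$ is of bounded variation on an interval $[a,b]$ containing $x$, so is $\phi$, and hence the one-sided limits $\phi(x^\pm)$ exist; the classical Dirichlet-Jordan theorem (Jordan's test) then gives convergence of the Fourier partial sums to $\tfrac12\bigl(\phi(x^+)+\phi(x^-)\bigr)$. Dividing back by $\sqrt{x}$ and using the continuity of $t\mapsto\sqrt{t}$ at $x>0$ produces $\lim_N S_N(x)=\tfrac12\bigl(f(x^+)+f(x^-)\bigr)$, which is the assertion.

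The main obstacle is the second step: one must control the difference between $\Theta_N$ and the trigonometric Dirichlet kernel tightly enough that, after integration against $f\,t\,dt$, it vanishes in the limit. This demands careful estimates of the remainder in the Bessel asymptotics, a uniform treatment of the low-index terms where those asymptotics are inaccurate, and a separate Riemann-Lebesgue argument on a neighbourhood of $t=0$. These are exactly the technical points carried out in detail in \cite[\S 18]{Watson}, whose presentation I would adapt here.
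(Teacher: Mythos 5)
Your outline is correct and is essentially the same route the paper takes: the paper's proof of Proposition \ref{fourierbessel} is simply a citation of \cite[\S 18.24]{Watson}, and the surrounding discussion in \S\ref{z10} recalls precisely the kernel $T_n(t,x)=\sum_{k=1}^n 2J_\nu(j_kx)J_\nu(j_kt)/J_{\nu+1}(j_k)^2$, its contour-integral representation, and the Bessel asymptotics that you use. Your sketch (orthogonality, kernel representation, localization via asymptotics and Riemann--Lebesgue, then Jordan's test for bounded variation) is a faithful summary of Watson's argument, with the technical kernel estimates correctly identified as the substantive work and deferred to the same reference.
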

\begin{proof}
 Voir \cite[\S 18.24]{Watson}.
\end{proof}

En gardant les m\^emes hypoth\`eses et si l'on suppose en plus que $f$ est continue sur $[a,b]$ alors on a convergence uniforme. Pr\'ecis\'ement, on a le r\'esultat suivant
\begin{proposition}\label{fourierbessell}
 Si $f$ est continue et v\'erifie les hypoth\`eses de la proposition pr\'ec\'edente, alors la s\'erie de Fourier-Bessel
  associ\'ee \`a $f$ converge  uniform\'ement vers $f$ sur l'intervalle $[a+\delta,b-\delta]$, o\`u $\delta$ est un r\'eel
  positif non nul arbitraire.
\end{proposition}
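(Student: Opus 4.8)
Le plan est de ramener la convergence uniforme \`a la convergence simple d\'ej\`a obtenue dans la proposition \ref{fourierbessel}, en v\'erifiant que les estimations intervenant dans sa preuve sont uniformes en $x$ sur le compact $[a+\delta,b-\delta]$. Comme $f$ est continue sur $[a,b]$, ses limites \`a droite et \`a gauche co\"incident avec $f(x)$ en tout point $x\in ]a,b[$; la proposition \ref{fourierbessel} assure donc que la s\'erie de Fourier-Bessel converge ponctuellement vers $f(x)$ sur $]a,b[$. Il suffit alors d'\'etablir que la somme partielle $S_N(x)=\sum_{k=1}^N a_k J_\nu(j_k x)$ tend vers $f(x)$ uniform\'ement sur $[a+\delta,b-\delta]$.

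Je suivrais l'approche de \cite[\S 18]{Watson}. On \'ecrit d'abord $S_N(x)=\int_0^1 f(t)\,\mathcal{D}_N(x,t)\,t\,dt$, o\`u $\mathcal{D}_N(x,t)=\sum_{k=1}^N \frac{2\,J_\nu(j_k x)J_\nu(j_k t)}{J_{\nu+1}(j_k)^2}$ est le noyau de Dirichlet associ\'e au syst\`eme orthogonal $\{J_\nu(j_k\cdot)\}$ dans $L^2([0,1],t\,dt)$. Trois ingr\'edients, tous disponibles dans \cite[\S 18]{Watson}, interviennent alors: (i) un analogue du lemme de Riemann-Lebesgue, donnant un principe de localisation selon lequel la contribution de $f$ hors d'un petit voisinage de $x$ est n\'egligeable; (ii) les d\'eveloppements asymptotiques $J_\nu(z)=\sqrt{2/(\pi z)}\,\cos\bigl(z-\tfrac{\nu\pi}{2}-\tfrac{\pi}{4}\bigr)+O(z^{-3/2})$ et $j_k=k\pi+O(1)$, qui identifient $\mathcal{D}_N(x,t)$, \`a une erreur contr\^olable pr\`es, au noyau de Dirichlet trigonom\'etrique de la forme $\frac{\sin\bigl(j_N(t-x)\bigr)}{\pi(t-x)}$; (iii) le deuxi\`eme th\'eor\`eme de la moyenne, appliqu\'e gr\^ace \`a l'hypoth\`ese de variation born\'ee de $f$. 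En combinant ces points on se ram\`ene exactement \`a l'argument classique de convergence uniforme des s\'eries de Fourier pour les fonctions continues et \`a variation born\'ee.

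Le point d\'elicat est de garantir l'uniformit\'e de chacune de ces estimations lorsque $x$ parcourt $[a+\delta,b-\delta]$. Cette uniformit\'e repose sur deux faits: d'une part $x$ reste minor\'e par $a+\delta>0$ et major\'e par $b-\delta<1$, ce qui rend valides uniform\'ement les asymptotiques de $J_\nu(j_k x)$ (lesquelles d\'eg\'en\`erent pr\`es de $x=0$); d'autre part $f$, continue sur le compact $[a,b]$, y est uniform\'ement continue, de sorte que le terme de localisation pr\`es de la diagonale $t=x$ se majore ind\'ependamment de $x$. L'obstacle principal r\'eside pr\'ecis\'ement dans ce contr\^ole pr\`es de la diagonale: il faut comparer $\mathcal{D}_N(x,t)$ au noyau de Dirichlet trigonom\'etrique avec une erreur uniforme en $x$, puis exploiter la variation born\'ee via le deuxi\`eme th\'eor\`eme de la moyenne pour absorber la singularit\'e du noyau. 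Une fois cette comparaison rendue uniforme, la convergence uniforme sur $[a+\delta,b-\delta]$ s'obtient en reproduisant l'argument de \cite[\S 18.25]{Watson}.
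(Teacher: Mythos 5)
Votre d\'emarche co\"incide avec celle du texte~: la preuve donn\'ee dans l'article se r\'eduit \`a la r\'ef\'erence \cite[\S 18.25]{Watson}, et votre esquisse reproduit fid\`element les ingr\'edients de l'argument de Watson (noyau $T_n$, principe de localisation, asymptotiques des fonctions de Bessel ramenant $T_n$ au noyau de Dirichlet trigonom\'etrique, deuxi\`eme th\'eor\`eme de la moyenne via la variation born\'ee, uniformit\'e sur le compact $[a+\delta,b-\delta]$ o\`u $x$ reste \'eloign\'e de $0$ et de $1$). Votre texte reste un plan dont les \'etapes d\'elicates sont renvoy\'ees \`a Watson, mais c'est exactement ce que fait l'article lui-m\^eme.
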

\begin{proof}
Voir \cite[\S 18.25]{Watson}.
\end{proof}

Comme les sommes partielles de la s\'erie de Fourier-Bessel associ\'ee \`a une fonction $f$ continue sont nulles  en $x=1$.
Il est donc n\'ecessaire de supposer que  $\lim_{x\mapsto 1^-}f(x)=0 $ si l'on veut garantir la convergence uniforme
de la s\'erie de Fourier-Bessel au voisinage de $x=1$.  On montre dans \cite[\S 18.26]{Watson} que la continuit\'e de $f$ sur $[a,1]$, avec $a\geq 0$ et la condition $f(1)=0$ suffissent pour assurer la convergence uniforme sur un intervalle de la forme $[a+\delta,1]$ pour $\delta>0$.\\

Le dernier r\'esultat fondamental de la th\'eorie des s\'eries de Fourier-Bessel expos\'ee dans \cite[\S 18]{Watson} est l'unicit\'e du d\'eveloppement en s\'erie de Fourier-Bessel. On montre dans \cite[\S 18.6]{Watson}, sous la condition de la convergence absolue de $\int_0^1 t^\frac{1}{2}f(t)dt$, que $f$ est nulle presque partout si et seulement si tous les coefficients de la s\'erie de Fourier-Bessel associ\'ee  sont nuls. \\

Le point  cl\'e de la th\'eorie des s\'eries de Fourier-Bessel classique r\'eside en   l'\'etude de la suite de fonctions
suivante:
\[
 (x,t)(\in [0,1]^2)\longmapsto T_n(t,x)=\sum_{k=1}^n\frac{2J_\nu(j_k x)J_\nu(j_k t)}{J_{\nu+1}(j_k)^2}.
\]
Notons d'abord que:
\[
 \sum_{k=1}^n a_k J_\nu(j_k x)=\int_0^1 f(t)T_n(t,x)tdt\quad \forall\, n\in \N_{\geq 1}.
\]
 Les fonction $T_n$ jouent le m\^eme rôle que
\[
 (x,t)\mapsto \frac{\sin\bigl((n+\frac{1}{2})(x-t)\bigr)}{\sin(x-t)},\quad x\neq t
\]
pour la th\'eorie des s\'eries de Fourier.\\


Il est commode d'exprimer $T_n$ comme \'etant l'int\'egrale sur le contour d'un rectangle contenant seulement
$j_1,\ldots,j_n$ d'une fonction auxiliaire  m\'eromorphe dont les  uniques pôles sont les z\'eros de $J_\nu$ et ayant
 un comportement asymptotique ad\'equat. Ce dernier point   nous permettra d'\'etudier $T_n$ pour $n$ assez grand et
 de d\'eduire la plupart des r\'esultats de la th\'eorie classique.\\

Nous allons rappeler quelques points techniques du \cite[\S 18]{Watson} dont le but
de les adapter et les appliquer ult\'erieurement \`a notre situation. Dans les pages pp. 583-584, pour tous $0<t\neq x<1$, on introduit la fonction $g$ donn\'ee par
\[
 g(w)=\frac{2w}{t^2-x^2}\bigl( tJ_\nu(xw)J_{\nu+1}(tw)-xJ_\nu(tw)J_{\nu+1}(xw) \bigr),\quad \forall\, w\in \C,
\]
et on montre que le r\'esidu de $
 w\mapsto \frac{g(w)}{wJ_\nu (w)^2}$
en $w=j_k$ est \'egal \`a $
 2\frac{J_\nu(xj_k)J_\nu(tj_k)}{J_{\nu+1}(j_k)^2}\footnote{On commence par noter que le r\'esidu de $ w\mapsto \frac{g(w)}{wJ_\nu (w)^2}$ en $j_k$ est $ \frac{g'(j_k)}{j_k J_\nu^{'2}(j_k)}$. Cela r\'esulte du fait suivant: Si $\vf_1$ et $\vf_2$ sont deux fonctions holomorphes sur $\C$ et  $\la$ est un z\'ero simple de $\vf_2$ alors on peut montrer \`a l'aide d'un d\'eveloppement limit\'e convenable au voisinage de $\la$ que
\[
 \mathrm{Res}_j\Bigl(w\mapsto \frac{\vf_1(w)}{w\vf_2^2(w)}\Bigr)=\Bigl( \frac{\vf'_1(j)}{j \vf_2'(j)^2}-\frac{\vf_1(j)}{ \vf_2'(j)^2}\bigl( 1+j \frac{\vf_2''(j)}{\vf_2'(j)} \bigr) \Bigr).
\]
Mais on a ici $J_\nu(j)=0$ et $J_\nu$ v\'erifie $J_\nu''(z)+\frac{1}{z}J_\nu'(z)+(1-\frac{\nu^2}{z^2})J_\nu(z)=0$, donc $1+j \frac{J_\nu'(j)}{J_\nu''(j)}=0$.}.$

D'un autre cot\'e, on a $\forall\, w\in \C$:
\[
\begin{split}
 \frac{d}{dw}\bigl(w J_\nu (xw)&J_{\nu+1}(tw) \bigr)=\frac{d}{dw}\bigl((xw)^{-\nu}J_\nu(tw) (tw)^{\nu+1}J_{\nu+1}(tw)  \bigr)\frac{x^\nu}{t^{\nu+1}}\\
&=-\frac{x^{\nu+1}}{t^{\nu+1}}(xw)^{-\nu}J_{\nu+1}(xw)(tw)^{\nu+1}J_{\nu+1}(tw)+\frac{x^\nu}{t^\nu}(xw)^{-\nu}J_\nu(xw)(tw)^{\nu+1} J_\nu(tw)\;\;\text{par}\; \ref{B1}\,\text{et}\,\ref{B5}\\
&=-xw J_{\nu+1}(xw)J_{\nu+1}(tw)+twJ_\nu(xw)J_\nu(tw),
\end{split}
\]
et par sym\'etrie, on en d\'eduit que
\[
 \frac{d}{dw}\bigl(w J_\nu (tw)J_{\nu+1}(xw) \bigr)=-tw J_{\nu+1}(tw)J_{\nu+1}(xw)+xwJ_\nu(tw)J_\nu(xw),\quad \forall\, w\in \C.
\]
En regroupant tout cela on obtient:
\[
 g'(w)=2wJ_\nu(xw)J_\nu(tw),\quad\forall\, w\in \C.
\]
On conclut que le r\'esidu de $w\mapsto \frac{g(w)}{wJ_\nu(w)^2}$ en $j_k$ est
\[
 2\frac{J_\nu(xj_k)J_\nu(tj_k)}{J_{\nu+1}(j_k)^2}.
\]
On consid\`ere maintenant le rectangle de sommets $iB$, $-iB$, $A_n+iB$ et $A_n-iB$ o\`u $B>0$ et $A_n$ est un r\'eel tel que $j_n<A_n<j_{n+1}$. Remarquons enfin que $w\mapsto  \frac{g(w)}{wJ_\nu(w)^2}$ est d'ordre 1 en $w=0$ (on le v\'erifie en utilisant \ref{ordreenzero} ). Donc par cette discussion on peut affirmer que l'int\'egrale de $w\mapsto \frac{g(w)}{wJ_\nu(w)^2}$  le long de ce contour est \'egale \`a $T_n(t,x)$. \\

La suite   utilise de mani\`ere cruciale les propri\'et\'es asymptotiques des fonctions de Bessel. En fait, le d\'eveloppement asymptotique des fonctions de Bessel, voir \cite[\S 7.21]{Watson}, nous permet d'assurer que l'int\'egrale sur les cot\'es inf\'erieurs et sup\'erieurs du rectangle s'annulent si $B$ tends vers l'infini, aussi puisque les int\'egrands sont impairs et sans singularit\'es  sur $[-Bi,Bi]$ donc leur int\'egrale sur ce segment  est nulle.   On conclut qu'on a
\[
 T_n(t,x)=\frac{1}{2\pi i}\int_{A_n-\infty i}^{A_n+\infty i}\frac{g(w)}{wJ_\nu(w)^2}dw,\quad \forall\,\, 0<x+t<2,\, x\neq t.
\]
En utilisant les deux in\'egalit\'es suivantes:
\begin{equation}\label{besselineqtriv}
 \bigl|J_\nu (\theta w) \bigr|\leq c_1\frac{\exp(|Im(\theta w)|)}{\sqrt{|\theta w|}},\quad  \bigl|J_\nu ( w) \bigr|\geq c_2\frac{\exp(|Im(w)|)}{\sqrt{| w|}},
\end{equation}

(ils d\'ecoulent  de l'expression du d\'eveloppement asymptotique donn\'ee dans \cite[\S 7.21]{Watson}), avec $\theta$ est un r\'eel positif non nul fix\'e, on d\'eduit la plupart des in\'egalit\'es \'enonc\'ees dans la suite.

\subsection{Th\'eorie des s\'eries de Fourier-Bessel g\'en\'eralis\'ee}\label{z9}

Soit $m$ un entier positif. Soit $E$ le sous-espace vectoriel des fonctions complexes sur $\R^+$ d\'efini comme suit: $f\in E$ si les fonctions
suivantes
$x\in [0,1]\mapsto f(x)$ et $x\in ]0,1]\mapsto f(\frac{1}{x})$ sont continues et born\'ees. On munit $E$  de la norme
scalaire $\vc_m$
suivante:
\[
\|f\|^2_m:=\int_0^1 |f(x)|^2xdx+\int_1^\infty |f(x)|^2\frac{dx}{x^{3+2m}}\quad \forall\, f\in E,
\]
et nous notons  par $\mathcal{E}_m$ la compl\'etion de $E$ pour cette norme. Nous avons
alors les propri\'et\'es suivantes:
\begin{proposition}\label{lespaceE}
On a,
\begin{enumerate}
\item  $\mathcal{E}_0$ est isom\'etrique \`a $L^2([0,1])\oplus L^2([0,1])$\,{}\footnote{ Ici, $L^2([0,1])$ d\'esigne  l'espace des fonctions complexes carr\'e int\'egrables pour la mesure $x\,dx$ sur $[0,1]$. Cet espace
joue un
rôle important dans la th\'eorie des s\'eries de Fourier-Bessel.}.
\item $(\mathcal{E}_m)_{m\in \N}$ forme une suite d'espaces hilbertiens strictement croissante pour l'inclusion.
\item Soit $\nu\in \Z$. On consid\`ere  la fonction  $P_\nu$ d\'efinie presque partout sur $\R^+$ comme suit: $P_\nu(x)=x^{\nu}$ pour
tout $x\in \R^{+\ast}$. Alors $P_\nu\in \mathcal{E}_m$ si et seulement si $ \nu\in \{0,1,\ldots, m\}$.
\end{enumerate}
\end{proposition}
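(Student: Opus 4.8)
Le plan consiste \`a r\'ealiser concr\`etement chaque $\mathcal{E}_m$ comme une somme directe de deux espaces $L^2$ \`a poids sur $[0,1]$, au moyen du changement de variables $y=1/x$. Pour $f\in E$, ce changement de variables dans la seconde int\'egrale donne
\[
\int_1^\infty |f(x)|^2\frac{dx}{x^{3+2m}}=\int_0^1 |f(1/y)|^2\, y^{1+2m}\,dy,
\]
d'o\`u
\[
\|f\|_m^2=\int_0^1|f(x)|^2\,x\,dx+\int_0^1\bigl|f(1/y)\bigr|^2\,y^{1+2m}\,dy.
\]
Je consid\'ererais alors l'application $\Phi_m\colon f\mapsto\bigl(f|_{[0,1]},\,x\mapsto f(1/x)\bigr)$ de $E$ vers $L^2([0,1],x\,dx)\oplus L^2([0,1],x^{1+2m}\,dx)$. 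La formule pr\'ec\'edente montre que $\Phi_m$ est une isom\'etrie. Son image est exactement l'ensemble des couples $(f_1,f_2)$ de fonctions continues born\'ees co\"incidant en $x=1$, qui est dense dans le produit (les fonctions continues sont denses dans chaque facteur et la contrainte ponctuelle en $x=1$ ne co\^ute qu'un terme arbitrairement petit en norme $L^2$). Ainsi $\Phi_m$ s'\'etend en un isomorphisme isom\'etrique $\mathcal{E}_m\simeq L^2([0,1],x\,dx)\oplus L^2([0,1],x^{1+2m}\,dx)$. Le point (1.) en est le cas $m=0$, puisque $x^{1+2m}=x$.

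Pour le point (3.), j'utiliserais cette r\'ealisation: la fonction $P_\nu$ correspond au couple $(x^\nu,x^{-\nu})$ sur $[0,1]$. Le premier facteur impose $\int_0^1 x^{2\nu+1}\,dx<\infty$, soit $\nu\ge 0$; le second impose $\int_0^1 x^{1+2m-2\nu}\,dx<\infty$, soit $\nu\le m$. Comme $\nu$ est entier, on obtient $P_\nu\in\mathcal{E}_m$ si et seulement si $0\le\nu\le m$.

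Pour le point (2.), je partirais de l'in\'egalit\'e $x^{3+2m}\le x^{1+2m}$ sur $[0,1]$, qui entra\^ine $L^2([0,1],x^{1+2m}\,dx)\subset L^2([0,1],x^{3+2m}\,dx)$. Le premier facteur \'etant le m\^eme, la r\'ealisation pr\'ec\'edente fournit une application canonique $\mathcal{E}_m\to\mathcal{E}_{m+1}$ (prolongement de l'identit\'e de $E$, qui est $1$-lipschitzienne car $\|\cdot\|_{m+1}\le\|\cdot\|_m$). Cette application est injective: un \'el\'ement nul dans $L^2([0,1],x^{3+2m}\,dx)$ est nul presque partout, donc nul dans $L^2([0,1],x^{1+2m}\,dx)$. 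On obtient ainsi une suite croissante $\mathcal{E}_m\subset\mathcal{E}_{m+1}$. La stricte croissance r\'esulte alors du point (3.): la fonction $P_{m+1}$ appartient \`a $\mathcal{E}_{m+1}$ mais non \`a $\mathcal{E}_m$, car $\int_0^1 x^{-2(m+1)+3+2m}\,dx=\int_0^1 x\,dx<\infty$ tandis que $\int_0^1 x^{-2(m+1)+1+2m}\,dx=\int_0^1 x^{-1}\,dx=\infty$.

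La principale difficult\'e technique se situe au premier paragraphe: il faut justifier soigneusement la densit\'e de l'image de $\Phi_m$ (donc la surjectivit\'e de l'isom\'etrie prolong\'ee \`a la compl\'etion), ainsi que le fait que l'application canonique $\mathcal{E}_m\to\mathcal{E}_{m+1}$ soit bien une inclusion au sens o\`u elle est injective. Une fois la r\'ealisation $\mathcal{E}_m\simeq L^2([0,1],x\,dx)\oplus L^2([0,1],x^{1+2m}\,dx)$ \'etablie, les trois \'enonc\'es se ram\`enent \`a de simples crit\`eres de convergence d'int\'egrales au voisinage de $0$.
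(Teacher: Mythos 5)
Votre preuve est correcte et suit pour l'essentiel la m\^eme d\'emarche que celle de l'article : l'isom\'etrie $f\mapsto\bigl(f|_{[0,1]},\,x\mapsto f(1/x)\bigr)$ issue du changement de variables $y=1/x$, puis des crit\`eres \'el\'ementaires de convergence d'int\'egrales au voisinage de $0$ pour les points (2.) et (3.). Les seules diff\'erences r\'eelles sont que vous \'etablissez la r\'ealisation pond\'er\'ee $\mathcal{E}_m\simeq L^2([0,1],x\,dx)\oplus L^2([0,1],x^{1+2m}\,dx)$ pour tout $m$ (l'article ne l'\'enonce que pour $m=0$), ce qui vous donne au passage une justification propre de l'injectivit\'e de l'inclusion $\mathcal{E}_m\hookrightarrow\mathcal{E}_{m'}$, et que vous prenez $P_{m+1}\in\mathcal{E}_{m+1}\setminus\mathcal{E}_m$ comme t\'emoin de la croissance stricte, l\`a o\`u l'article utilise la fonction $g_\al$ nulle sur $[0,1]$ et \'egale \`a $x^{-\al}$ sur $[1,\infty)$ avec $m<\al<m'$. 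Votre t\'emoin est d'ailleurs le plus s\^ur : une fonction born\'ee nulle sur $[0,1]$ a une norme $\|\cdot\|_m$ finie pour tout $m$ (l'int\'egrale $\int_1^\infty x^{-2\al-3-2m}dx$ converge toujours pour $\al>0$), de sorte que le $g_\al$ de l'article, tel qu'il est \'ecrit, ne s\'epare pas les espaces; c'est vraisemblablement une coquille (exposant $+\al$ attendu).
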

\begin{proof}

\begin{enumerate}
\item
On consid\`ere
l'application suivante
$\Phi:f(\in E)\mapsto \Phi(f)=(f_1,f_2)$ o\`u $f_1$ et $f_2$ sont deux fonctions d\'efinies presque partout sur $[0,1]$
donn\'ees par $f_1(x)=f(x)$ et
$f_2(x)=f(\frac{1}{x})$ pour  tout $x\in ]0,1]$. $\Phi$ d\'efinit alors une isom\'etrie entre   $\mathcal{E}_0$ et $L^2([0,1])^{\oplus
2}$.

\item Soit $m<m'\in \N$. Donc $\vc_{m'}\leq \vc_m$, par cons\'equent
$\mathcal{E}_m\subseteq \mathcal{E}_{m'}$. Soit $\al$ un
 r\'eel strictement positif et
 $g_\al$ l'\'el\'ement de $E$ d\'efini par: $g(x)=0$ si $x\in [0,1]$ et $g(x)=x^{-\al}$ si $x\geq 1$. En choisissant
$m<\al<m'$,
alors on v\'erifie $g_\al\in \mathcal{E}_{m'}\setminus \mathcal{E}_m$.
\item

Soit $\nu\in\Z$ et  on consid\`ere $P_\nu$, la fonction d\'efinie presque partout sur $\R^+$, par $P_\nu(x)=x^\nu$ pour tout $x\neq 0$.
On a
$\|P_\nu\|_m^2=\int_0^1x^{2\nu+1}dx+\int_1^\infty x^{2\nu-3-2m}dx$. Cette quantit\'e est finie si et seulement si
$\nu\in \{0,\ldots, m\}$.
\end{enumerate}

\end{proof}

\begin{remarque}\label{z12}\rm{ Soit $\xi \in A^{(0,0)}(\p^1,\mathcal{O}(m))$. Il existe une fonction complexe $g$ sur $\p^1$ telle que
$\xi=g\otimes 1$\footnote{Comme $\mathcal{O}(m)$ est engendr\'e par ses sections globales, alors par une partition
de l'unit\'e on peut \'ecrire $\xi=\sum_{j=0}^m g_j\otimes z^j$ o\`u $g_j\in A^{(0,0)}(\p^1)$. On pose alors
$g=\sum_{j=0}^m g_j z^j$.}. On v\'erifie que
\begin{align*}
\|\xi\|_{L^2,\infty}^2&=\frac{i}{2\pi}\int_{|z|\leq 1}|g(z)|^2 dz\wedge
d\z+\frac{i}{2\pi}\int_{|z|\geq 1}|g(z)|^2\frac{dz\wedge d\z}{|z|^{4+2m}}\\
&=2\int_0^1 \frac{1}{2\pi}\bigl(\int_0^{2\pi} |g(xe^{i\theta})|^2d\theta\bigr)xdx+2\int_1^\infty
 \frac{1}{2\pi}\bigl(\int_0^{2\pi} |g(xe^{i\theta})|^2d\theta\bigr)\frac{dx}{x^{3+2m}}\\
&=2\sum_{\nu\in \Z}\int_0^1|g_\nu(x)|^2 xdx+ 2\sum_{\nu\in \Z}\int_1^\infty  |g_\nu(x)|^2\frac{dx}{x^{3+2m}}\\
&=2\sum_{\nu\in \Z}\|g_\nu\|_m^2,
\end{align*}
La 3\`eme \'egalit\'e  r\'esulte de la th\'eorie des s\'eries de Fourier appliqu\'ee \`a la fonction $\theta\mapsto g(x e^{i\theta})$ qui est carr\'e
int\'egrable pour tout $x\neq 0$ (Ce dernier point provient du fait que  $g$ est continue sur
$\p^1\setminus\{0,\infty\}$). On peut alors \'ecrire que:
\begin{equation}\label{z11}
\|g\otimes 1\|_{L^2,\infty}^2=2\sum_{\nu\in \Z}\|g_\nu\|_m^2.
\end{equation}
Donc si $g\otimes 1\in \mathcal{H}_m$ alors $g_\nu\in \mathcal{E}_m$ pour tout $\nu\in \Z$. R\'eciproquement,
si l'on se donne $(g_\nu)_{\nu\in \Z}$ une suite d'\'el\'ements de $\mathcal{E}_m$ telle que
$\sum_{\nu\in \Z}\|g_\nu\|_m^2<\infty$, alors cette suite d\'efinit
un \'el\'ement de $\mathcal{H}_m$.}
\end{remarque}

Fixons $\nu\in \Z$.  Comme $\mathcal{Z}_{m,\nu}$ est un sous-ensemble discret de $\R^\ast$ alors on peut ordonner l'ensemble $\bigl\{\frac{\la^2}{4}\,|\, \la\in \mathcal{Z}_{m,\nu}  \bigr\}$ et on note par $\la_k$ la $k$-\`eme
valeur de cet ensemble pour l'ordre croissant.  On pose  ${\boldsymbol\vf}_{\nu,k}^{(m)}$ la fonction sur $\R^+$ d\'efinie comme
suit:
  \begin{equation}
{\boldsymbol\vf}_{\nu,k}^{(m)}(x) =
\begin{cases}
 J_n(\la_k  x)  & \text{si } 0\leq x \leq 1,\\
\frac{J_n(\la_k)}{J_{n-m}(\la_k)}x^{m}J_{n-m}(\frac{\la_k}{x}) & \text{si } x>1.
\end{cases}
\end{equation}
On v\'erifie que:
\begin{equation}\label{x18}
{\boldsymbol\vf}_{\nu,k}^{(m)}\in \mathcal{E}_{m},\,\,
\|\vf_{\nu,k}^{(m)}\|^2_{L^2,\infty}=\|{\boldsymbol\vf}_{\nu,k}^{(m)}\|^2_m\quad \text{et}\quad (f\otimes 1,
\vf_{\nu,k}^{(m)})_{L^2,\infty}=(f_\nu,
{\boldsymbol\vf}_{\nu,k}^{(m)})_m\, \forall \,\,f\otimes 1\in \mathcal{H}^{(m)}.
\end{equation}
 Le th\'eor\`eme \ref{x16} sera un corollaire
d'un  r\'esultat plus  pr\'ecis, \`a savoir le  le th\'eor\`eme suivant:
\begin{theorem}\label{x17}
En gardant les m\^emes notations et hypoth\`eses, la famille suivante:
\[
\bigl\{ {\boldsymbol\vf}_{\nu,k}^{(m)}|\, k\in \N \bigr\}\quad\bigl(\text{resp.}\,\bigl\{1,x,\ldots,x^m  \bigr\}\cup\bigl\{ {\boldsymbol\vf}_{\nu,k}^{(m)}|\, k\in \N \bigr\}\bigr),
\]
forme
 une base hilbertienne dans $\mathcal{E}_m$ lorsque $\nu\leq -1$ ou $\nu\geq m+1$ (resp. si $\nu \in \{0,1,\ldots,m\}$).
\end{theorem}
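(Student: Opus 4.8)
The plan is to deduce the asserted Hilbert-basis property from a convergence theorem for the generalized Fourier--Bessel series, obtained by transposing Watson's contour-integral method (\cite[\S 18]{Watson}, recalled in \S\ref{z10}) from the zeros of $J_\nu$ to the zeros of $L_{m,\nu}$. Since the family $\{\boldsymbol\vf_{\nu,k}^{(m)}\}$ is already known to be orthogonal (Théorème \ref{vecteurpropreL}.3, transported to $\mathcal{E}_m$ via \ref{x18}) and each $\boldsymbol\vf_{\nu,k}^{(m)}$ lies in $\mathcal{E}_m$ (again \ref{x18}), the only point to establish is \emph{totalité}: if $f\in\mathcal{E}_m$ satisfies $(f,\boldsymbol\vf_{\nu,k}^{(m)})_m=0$ for all $k$, and in addition $(f,P_\nu)_m=0$ when $\nu\in\{0,\ldots,m\}$, then $f=0$ in $\mathcal{E}_m$. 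First I would fix the order $\nu$, reduce to $f\in E$ by density, and record that $P_\nu$ is orthogonal to every $\boldsymbol\vf_{\nu,k}^{(m)}$ as a consequence of Théorème \ref{laplacevect1} (a nonzero eigenvalue $\tfrac{\la^2}{4}$ against the kernel element), so that $P_\nu$ plays exactly the role of the ``zero-frequency'' mode that the constant function plays in the Neumann Fourier--Bessel expansion.

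The core step is to introduce the reproducing kernel
\[
T_n(t,x)=\sum_{k=1}^{n}\frac{\boldsymbol\vf_{\nu,k}^{(m)}(x)\,\boldsymbol\vf_{\nu,k}^{(m)}(t)}{\|\boldsymbol\vf_{\nu,k}^{(m)}\|_m^{2}},
\]
so that the $n$-th partial sum of the generalized series of $f$ is $(f,T_n(\cdot,x))_m$, and to represent $T_n$ as a contour integral. The two decisive inputs are the norm formula of Théorème \ref{deriveeLnorme}, i.e. $\|\boldsymbol\vf_{\nu,k}^{(m)}\|_m^2=\frac{J_\nu(\la_k)}{2J_{\nu-m}(\la_k)}L'_{m,\nu}(\la_k)$, and the factorization $L_{m,\nu}=J_{\nu+1}J_{\nu-m}-J_\nu J_{\nu-m-1}$ of Lemme \ref{zerosimple}. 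Using that each $\la_k$ is a simple zero of $L_{m,\nu}$, these let me seek an auxiliary meromorphic function $G_\nu(w;x,t)=\Phi_\nu(w;x,t)/L_{m,\nu}(w)$ whose only poles are the points $\la_k$ of $\mathcal{Z}_{m,\nu}$ and whose residue at $w=\la_k$ is exactly the $k$-th term of $T_n$; equivalently $\Phi_\nu(\la_k;x,t)=\frac{2J_{\nu-m}(\la_k)}{J_\nu(\la_k)}\boldsymbol\vf_{\nu,k}^{(m)}(x)\boldsymbol\vf_{\nu,k}^{(m)}(t)$. The entire numerator $\Phi_\nu$ must be assembled, à la Watson, from antisymmetric combinations of $J_\nu(xw)J_{\nu+1}(tw)$ on $[0,1]^2$ and their $x^mJ_{\nu-m}(\la/x)$-counterparts on $]1,\infty[$, so as to reproduce the two-piece definition of $\boldsymbol\vf_{\nu,k}^{(m)}$ while staying holomorphic (no spurious poles from $J_\nu$). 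Integrating $G_\nu$ around a rectangle with vertical sides $\Re w=A_n$, $\la_n<A_n<\la_{n+1}$, and letting the horizontal sides recede, the residue theorem then gives $T_n(t,x)=\frac{1}{2\pi i}\int_{A_n-i\infty}^{A_n+i\infty}G_\nu(w;x,t)\,dw$.

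Finally I would carry out the asymptotic analysis: inserting the asymptotic expansions of $J_\nu$ and $J_{\nu-m}$ (\cite[\S 7.21]{Watson}) and the bounds \ref{besselineqtriv}, one shows the horizontal contributions vanish and estimates the vertical integral, yielding the analogue of Watson's convergence statement (this is Théorème \ref{x26}): at a point where $f$ is of bounded variation the partial sums converge to $\tfrac12\bigl(f(x^+)+f(x^-)\bigr)$, with uniform convergence on compact subintervals of $]0,1[$ and of $]1,\infty[$. Totality is then immediate: if all coefficients of $f$ vanish, the sums $(f,T_n(\cdot,x))_m$ are identically zero, so $f$ vanishes at every point of continuity and hence a.e., giving $f=0$ in $\mathcal{E}_m$; combined with \ref{x18} and Remarque \ref{z12} this also yields Théorème \ref{x16}. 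I expect the main obstacle to be precisely the construction of $\Phi_\nu$ and the verification that its residues reproduce the kernel: the gluing at $x=1$ and the order shift by $m$ (with $\nu-m$ of either sign) force a numerator genuinely different from Watson's, and the same features make the uniform control of the vertical integral the most delicate estimate of the argument.
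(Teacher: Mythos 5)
Your overall strategy --- adapt Watson's contour-integral method with $L_{m,\nu}$ in place of $J_\nu$, and use Th\'eor\`eme \ref{deriveeLnorme} to identify residues with the terms of the kernel --- is the right one and is essentially the paper's. But two of your design choices would not survive execution. First, you require the auxiliary meromorphic function to have \emph{only} the points of $\mathcal{Z}_{m,\nu}$ as poles (``no spurious poles from $J_\nu$''). The paper does the opposite: it splits the kernel into four pieces $\kappa_n^{\pm},\tau_n^{\pm}$ (matching the two-piece definition of ${\boldsymbol\vf}_{\nu,k}^{(m)}$ in each variable) and uses four functions $t_{\pm,\nu},s_{\pm,\nu}$, of which $t_{+,\nu}$ and $s_{+,\nu}$ deliberately retain poles at the zeros of $J_\nu$ and $J_{\nu-m}$; the residues there reproduce the \emph{classical} Fourier--Bessel kernel, so the contour integral computes the difference between the new kernel and the classical one, and the convergence of the classical part is quoted from Watson. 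This is not cosmetic: it is how the paper avoids proving a pointwise convergence theorem for the generalized series from scratch. You also do not account for the residue at $w=0$, which is nonzero exactly when $\nu\in\{0,\ldots,m\}$ and is the sole source of the monomial terms $x^{\nu}$, $x^{m-\nu}$ --- i.e.\ of the very fact that the family must be completed by $\{1,x,\ldots,x^m\}$ in that range.

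Second, your totality step has a genuine gap. You prove convergence of the partial sums to $\tfrac12\bigl(f(x^+)+f(x^-)\bigr)$ for $f$ of bounded variation, then argue that if all coefficients of $f$ vanish then $f$ vanishes at its points of continuity. But an arbitrary $f\in\mathcal{E}_m$ orthogonal to the family need not be of bounded variation, nor even continuous, and ``reducing to $f\in E$ by density'' is not legitimate here: density lets you approximate $f$, not transfer the orthogonality hypotheses to the approximant. The paper sidesteps this entirely: Th\'eor\`emes \ref{x22} and \ref{x23} establish the expansion only for the explicit test functions $x^{2p+\nu+1}-x^{2q+\nu+1}$ (with uniform convergence on $[0,1]$, so the resulting identities can be paired against any $f\in\mathcal{E}_m$), yielding Th\'eor\`eme \ref{x26}; the vanishing of all coefficients of $f$ then forces $\int_0^1\bigl(x^{2p+\nu+1}-x^{2q+\nu+1}\bigr)f(x)\,dx=0$ for all large $p,q$ (and likewise for $f(1/x)$), and a classical moment theorem gives $f=0$. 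To repair your argument you would either need to prove your convergence theorem on a dense class together with uniform $L^2$-boundedness of the partial-sum operators, or switch to the paper's test-function/moment route.
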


\begin{remarque}\label{z13}
Notons que le cas $m=0$ est \'equivalent \`a la th\'eorie classique des s\'eries de Fourier-Bessel. Par cela,
on veut dire que  la  famille $\{1,  J_\nu(\la r))|\,  J_\nu'(\la)=0\}$ (resp. $\{ J_\nu(\la r)|\,
J_\nu(\la)=0\}$) forme un syst\`eme total pour l'espace des fonctions $L^2$
 sur $[0,1]$ v\'erifiant la condition de Neumann  (resp. la condition de Dirichlet)
  (voir la preuve du \ref{x15}).
\end{remarque}
Afin d'\'etablir le th\'eor\`eme \ref{x17}, alors il suffira de d\'emontrer le  th\'eor\`eme ci-dessous:
\begin{theorem}\label{x26}
Soient $m\in \N_{\geq 1}$, $\nu\in \Z$ et  $p,q$ deux entiers $\geq |\nu|$. Il existe deux suites r\'eelles
$(l_k)_{k\in \N}$ et $(l'_k)_{k\in \N}$  telles que pour tout $f\in \mathcal{E}_m$, on a:
\begin{align*}
\delta_\nu(f,x^\nu)_m&=\sum_{k=1}^\infty   l'_k
\frac{(f,{\boldsymbol\vf}_{\nu,k}^{(m)})_{m}}{2\|{\boldsymbol\vf}_{\nu,k}^{(m)}\|_{m}^2}-\int_0^1\bigl(x^{2p+2m-\nu+1}
-x^{2q+2m-\nu+1}\bigr)f(\frac{1}{x})dx \\
\delta_\nu(f,x^\nu)_m&= \sum_{k=1}^\infty   l_k \frac{ (f
,{\boldsymbol\vf}_{\nu,k}^{(m)})_{m}}{2\|{\boldsymbol\vf}_{\nu,k}^{(m)}\|_{m}^2}
-\int_0^1\bigl(x^{2p+\nu+1}-x^{2q+\nu+1}\bigr)f(x)dx,
\end{align*}
o\`u $\delta_\nu=0$ si $\nu\leq -1$ ou $\nu\geq m+1$.
\end{theorem}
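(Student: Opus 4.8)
The plan is to adapt Watson's contour--integral technique (recalled in \S\ref{z10}) to the present setting, with the entire function $L_{m,\nu}$ playing the role of $J_\nu$ and the discrete real set $\mathcal{Z}_{m,\nu}$ playing the role of the positive zeros $j_k$. Both asserted identities are scalar and share the same architecture --- a sum of residues indexed by $\mathcal{Z}_{m,\nu}$, a contribution localized at the origin, and a contour-at-infinity term --- so I would prove them in parallel: I treat the identity centred on the $[0,1]$--half and indicate that the companion follows from the analogous computation on the $[1,\infty)$--half, transported to $[0,1]$ by $x\mapsto 1/x$ (as in remark \ref{z12}), which swaps the two pieces of the $\mathcal{E}_m$--norm and effectively replaces $\nu$ by $\nu-m$, accounting for the shift of the exponents by $2m$. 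After fixing $\nu$ and writing $(f,{\boldsymbol\vf}_{\nu,k}^{(m)})_m$ explicitly, folding its $[1,\infty)$--part onto $[0,1]$, every quantity in sight becomes an integral of $f$ over $[0,1]$ against Bessel functions of argument $\lambda_k x$.

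First I would construct the analogue of Watson's auxiliary function $g$: a meromorphic function $w\mapsto\Phi_f(w)$ obtained by pairing $f$ against a Lommel-type bilinear combination of $J_\nu(xw),\,J_{\nu-m}(xw)$ and their neighbours, divided by $w\,L_{m,\nu}(w)^2$, whose only finite poles are the simple zeros $\lambda_k\in\mathcal{Z}_{m,\nu}$ (simplicity by theorem \ref{deriveeLnorme}) together with a pole at $w=0$. Using the residue formula for such quotients recalled in the footnote of \S\ref{z10}, the residue at each $\lambda_k$ reduces to a real multiple $l'_k$ of $(f,{\boldsymbol\vf}_{\nu,k}^{(m)})_m/\bigl(2\|{\boldsymbol\vf}_{\nu,k}^{(m)}\|_m^2\bigr)$; here theorem \ref{deriveeLnorme} is exactly the ingredient that converts $L'_{m,\nu}(\lambda_k)$ into the norm $\|{\boldsymbol\vf}_{\nu,k}^{(m)}\|_m^2$, so the norms in the denominators are produced automatically. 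The residue at $w=0$, read off from the Laurent expansion governed by \eqref{L1ordre}--\eqref{L2ordre}, contributes $0$ when $\nu\le-1$ or $\nu\ge m+1$ and contributes precisely $\delta_\nu(f,x^\nu)_m$ when $0\le\nu\le m$ --- matching the range in which $x^\nu\in\mathcal{E}_m$ by proposition \ref{lespaceE}, which is where the symbol $\delta_\nu$ originates.

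Then I would integrate $\Phi_f$ over the rectangle with vertices $\pm iB,\ A_n\pm iB$, where $\lambda_n<A_n<\lambda_{n+1}$, so that the residue theorem equates the boundary integral with the $n$-th partial sum plus the residue at the origin. Letting $B\to\infty$, the horizontal sides are killed by the Bessel asymptotic inequalities \eqref{besselineqtriv}, while the integral along the imaginary axis vanishes because the integrand is odd there, a consequence of the parity relation \eqref{Lparite}, $L_{m,\nu}(-w)=(-1)^{m+1}L_{m,\nu}(w)$. The two integers $p,q\ge|\nu|$ enter precisely to regularize the remaining integral over $\mathrm{Re}(w)=A_n$: forming $\Phi_f$ with two auxiliary polynomial weights indexed by $p$ and by $q$ and subtracting cancels the non-integrable leading asymptotics, so the difference of contour integrals converges as $A_n\to\infty$ and collapses --- after the fold $x\mapsto 1/x$ --- to the explicit correction $\int_0^1\bigl(x^{2p+2m-\nu+1}-x^{2q+2m-\nu+1}\bigr)f(1/x)\,dx$, and to its companion with $f(x)$ and exponents $2p+\nu+1,\,2q+\nu+1$ in the reflected computation.

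I expect the main obstacle to be this last asymptotic analysis. The inequalities \eqref{besselineqtriv} bound single Bessel functions, whereas $L_{m,\nu}$ and the numerator of $\Phi_f$ are products, so I must first establish uniform two-sided bounds for $|L_{m,\nu}(w)|$ and for the numerator along the three discarded sides of the rectangle, valid as $B,A_n\to\infty$, and then pin down the exact decay order along $\mathrm{Re}(w)=A_n$ so as to fix the polynomial weights attached to $p$ and $q$. The second delicate point is the Laurent computation at $w=0$: since $L_{m,\nu}$ has a zero of order at least $m+1$ there by \eqref{L1ordre}--\eqref{L2ordre}, extracting the residue of $\Phi_f$ and showing it equals $\delta_\nu(f,x^\nu)_m$ exactly in the range $0\le\nu\le m$ requires careful bookkeeping of the expansions. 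Once both points are settled, passing to the limit $n\to\infty$ turns the sum of residues into the asserted series and yields the two identities.
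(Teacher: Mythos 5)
Your proposal correctly identifies the overall architecture (contour integration against $L_{m,\nu}$, residues at the simple zeros $\la_k\in\mathcal{Z}_{m,\nu}$ converted into $\|{\boldsymbol\vf}_{\nu,k}^{(m)}\|_m^2$ via theorem \ref{deriveeLnorme}, a residue at $w=0$ producing $\delta_\nu$ exactly when $0\le\nu\le m$, parity killing the imaginary axis, Bessel asymptotics killing the horizontal sides). But there are two genuine gaps that would make the argument fail as written. First, you pair the general $f\in\mathcal{E}_m$ into the contour integrand from the start and then hope that the integral along $\mathrm{Re}(w)=A_n$ can be "regularized" by the weights indexed by $p,q$. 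For an arbitrary $L^2$-type $f$ the paired numerator has no usable decay in $w$, and the partial sums $S_n$ need not converge; this is precisely why the paper never integrates $f$ against the contour. Instead (theorems \ref{x22}, \ref{x23}) it runs the contour argument only for the explicit test functions $t\mapsto t^{2p+\nu+1}-t^{2q+\nu+1}$, whose pairing with $J_\nu(tw)$ is a \emph{finite} combination $2\frac{q-p}{w^2}J_{\nu+2}(w)+\sum_i c_i J_{\nu+i+2}(w)/w^{i+2}$ (corollary \ref{x21}), giving the $O(1/A_n)$ bound; the scalar identities of \ref{x26} are then obtained at the very end by pairing the resulting uniformly convergent series in $x$ against $f$. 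So $p$ and $q$ index test functions, not regulators of a contour integral involving $f$.

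Second, your sketch derives the correction terms $\int_0^1(x^{2p+\nu+1}-x^{2q+\nu+1})f(x)\,dx$ from the contour at infinity, but in the paper they arise quite differently: two of the four auxiliary functions ($t_{+,\nu}$ and $s_{+,\nu}$ in theorem \ref{ttss}) carry an extra $J_\nu(w)$ (resp. $J_{\nu-m}(w)$) in the denominator, so the residue sum contains, besides the $\la_k$, the classical Fourier--Bessel partial sums over the zeros of $J_\nu$; the classical uniform-convergence theorems (\cite[\S 18.25--18.26]{Watson}) then identify their limit with $x^{2p+\nu+\frac12}-x^{2q+\nu+\frac12}$, and this is what becomes the correction integral after pairing with $f$. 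Relatedly, your choice of denominator $w\,L_{m,\nu}(w)^2$ (mimicking $wJ_\nu(w)^2$) creates double poles at the $\la_k$ and would require an exact antiderivative identity analogous to Watson's $g'(w)=2wJ_\nu(xw)J_\nu(tw)$ to collapse the residues; no such identity is exhibited, and the paper avoids the issue entirely by using $L_{m,\nu}$ to the first power, so that the residue at a simple zero $\la_k$ is $1/L_{m,\nu}'(\la_k)$ times the numerator, which theorem \ref{deriveeLnorme} converts into $1/(2\|{\boldsymbol\vf}_{\nu,k}^{(m)}\|_m^2)$ up to the factor $J_{\nu-m}(\la_k)/J_\nu(\la_k)$.
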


En effet, si  l'on fixe $m\in \N$ et $\nu\in \Z$, alors si
 $f\in \mathcal{E}_m$ tel que:
\[
 (f, x^k)_m=(f,{\boldsymbol\vf}_{{\nu,\la}}^{(m)} )_m=0\quad \forall\, k\in\{0,1,\ldots,m\},\;  \forall\, \la\in
 \mathcal{Z}_{m,\nu},
\]
alors par ce  dernier th\'eor\`eme \ref{x26}, nous d\'eduisons  que:
\begin{align*}
\int_0^1\bigl(x^{2p+2m-\nu+1}-x^{2q+2m-\nu+1}\bigr)f(\frac{1}{x})dx&=\int_0^1\bigl(x^{2p+\nu+1}-x^{2q+\nu+1}\bigr)
f(x)dx=0,
\end{align*}
et cela pour tous $p$ et $q$ assez grand (pour $\nu$ fix\'e). Un th\'eor\`eme classique d'analyse nous permet alors
de conclure que
les deux fonctions d\'efinies presque partout sur $[0,1]$: $x\mapsto f(\frac{1}{x})$ et $x\mapsto f(x)$ sont
 nulles presque partout sur $[0,1]$, ce qui est \'equivalent \`a:
 \[f=0,\]
 dans $\mathcal{E}_m$.\\

La suite a pour but d'\'etablir le th\'eor\`eme \ref{x26} ci-dessus.
Soit $f\in \mathcal{E}_{m}$ une fonction sur $\R$. Nous  consid\`erons la somme partielle suivante:
\[
S_n(x):= \sum_{k=1}^n \frac{({\boldsymbol\vf}_{\nu,k}^{(m)},f)_{m}}{\|{\boldsymbol\vf}_{\nu,k}^{(m)}\|^2_{m}}{\boldsymbol\vf}_{\nu,k}^{(m)}(x)
\quad
\forall\, x\in \R,\;n\in \N^\ast,
\]

On pose pour tous $0<t<1$ et $0<x<1$:
\begin{align*}
 \kappa_n^+(t,x)&:=\sum_{k=1}^n\frac{J_{\nu}(t\la_k)J_{\nu}(x\la_k)}{2\|{\boldsymbol\vf}_{\nu,k}^{(m)}\|_{m}^2},\\
\kappa_n^-(t,x)&:=\sum_{k=1}^n\frac{J_{\nu}(\la_k)}{J_{\nu-m}(\la_k)}\frac{J_{\nu}(x\la_k)J_{\nu-m}(\la_k t)}{\|{\boldsymbol\vf}_{\nu,k}^{(m)}\|^2_{m}},\\
\tau_n^+(t,x)&:=\sum_{k=1}^n\frac{J_{\nu}(\la_k)^2}{J_{\nu-m}(\la_k)^2}\frac{J_{\nu-m}(t\la_k)J_{\nu-m}(x\la_k)}{\|{\boldsymbol\vf}_{\nu,k}^{(m)}\|_{m}^2},\\
\tau_n^-(t,x)&:=\sum_{k=1}^n\frac{J_{\nu}(\la_k)}{J_{\nu-m}(\la_k)}\frac{J_{\nu-m}(\la_k x)J_{\nu}(\la_k t)}{\|{\boldsymbol\vf}_{\nu,k}^{(m)}\|_{m}^2}.
\end{align*}
On v\'erifie que
\[
 S_n(x)=\int_0^1 f(t)\kappa_n^+\bigl( t,x\bigr)  tdt+\int_1^\infty \frac{f(t)}{t^m}\kappa_n^-\bigl(\frac{1}{t},x\bigr)\frac{tdt}{t^4},\quad \text{si}\; 0\leq x\leq 1,
\]
et
\[
 S_n(x)=\int_0^1 f(t)x^m\tau_n^-\bigl(t,\frac{1}{x}\bigr)tdt+\int_1^\infty \frac{f(t)}{t^m}x^m\tau_n^-\bigl(\frac{1}{t},\frac{1}{x}\bigr)\frac{tdt}{t^4},\quad \text{si}\;  x\geq 1,
\]

Donc, il est naturel d'\'etudier les fonctions $\kappa^+_n,\kappa^-_n,\tau^+_n$ et $\tau_n^-$. Pour cela nous allons
 introduire les quatres fonctions m\'eromorphes sur $\C$ suivantes:
\begin{align*}
 t_{+,\nu}( w)&=\frac{J_{\nu}(tw)J_{\nu}(xw)J_{\nu-m}(w)}{J_{\nu}(w)L_{m,\nu}(w)},\\
t_{-,\nu}(w)&=\frac{J_{\nu}(xw)J_{\nu-m}(wt)}{L_{m,\nu}(w)},\\
 s_{+,\nu}(w)&=\frac{J_{\nu}(w)}{J_{\nu-m}(w)}\frac{J_{\nu-m}(xw)J_{\nu-m}(tw)}{L_{m,\nu}(w)},\\
s_{-,\nu}(w)&=\frac{J_{\nu-m}(xw)J_{\nu}(t w)}{L_{m,\nu}(w)}.
\end{align*}
o\`u $0<t<1$ et $0<x<1$.

Si l'on note par $A_n$ un r\'eel compris strictement entre $\la_n$ et $\la_{n+1}$ deux \'el\'ements
cons\'ecutifs de $\mathcal{Z}_{m,\nu}$. Alors nous avons  le th\'eor\`eme suivant:

\begin{theorem}\label{ttss}
 Soit $\nu\in \Z$.
\begin{enumerate}
\item
 Si $\nu\leq -1$ ou $\nu\geq m+1$, alors on a pour tout $n\gg1$:
 \begin{equation}\label{q1}
\frac{i}{2\pi }\int_{A_n-\infty i}^{A_n+\infty i}t_{+,\nu}(w)dw=\kappa_n^+(t,x)-\sum_{\substack{\al|J_{\nu}(\al)=0\\
0<\al<A_n}}\frac{J_{\nu}(t\al)J_{\nu}(x\al)}{J_{\nu+1}(\al)^2},
\end{equation}

\begin{equation}\label{q2}
 \frac{i}{2\pi}\int_{A_n-\infty i}^{A_n+\infty i}t_{-,\nu}(w)dw=\kappa_n^-(t,x),
 \end{equation}

 \begin{equation}\label{q3}
 \frac{i}{2\pi}\int_{A_n-\infty i}^{A_n+\infty i}s_{+,\nu}(w)dw
 =\tau_n^+(t,x)-\sum_{\substack{\gamma|J_{\nu-m}(\al)=0\\
0<\gamma<A_n}}\frac{J_{\nu-m}(t\gamma)J_{\nu-m}(x\gamma)}{J_{\nu-m+1}(\gamma)^2},
\end{equation}

\begin{equation}\label{q4}
 \frac{i}{2\pi}\int_{A_n-\infty i}^{A_n+\infty i}s_{-,\nu}(w)dw =\tau_n^-(t,x).
\end{equation}
\item
Si $0\leq \nu \leq m$, on a pour tout $n\gg1$:
\begin{align*}
\frac{i}{2\pi }\int_{A_n-\infty i}^{A_n+\infty i}t_{+,\nu}(w)dw&=-2\frac{(m-\nu+1)(\nu+1)}{(m+2)}t^\nu x^\nu+ \kappa_n^+(t,x)-\sum_{\substack{\al|J_{\nu}(\al)=0\\
0<\al<A_n}}\frac{J_{\nu}(t\al)J_{\nu}(x\al)}{J_{\nu+1}(\al)^2},\\
 \frac{i}{2\pi}\int_{A_n-\infty i}^{A_n+\infty i}t_{-,\nu}(w)dw&=-2\frac{(m-\nu+1)(\nu+1)}{(m+2)}t^{m-\nu} x^\nu+\kappa_n^-(t,x),\\
 \frac{i}{2\pi}\int_{A_n-\infty i}^{A_n+\infty i}s_{+,\nu}(w)dw&=-2\frac{(m-\nu+1)(\nu+1)}{(m+2)}t^{m-\nu} x^{m-\nu}+ \tau_n^+(t,x)-\sum_{\substack{\al|J_{\nu}(\al)=0\\
0<\al<A_n}}\frac{J_{\nu-m}(t\gamma)J_{\nu-m}(x\gamma)}{J_{\nu-m+1}(\gamma)^2}, \\
 \frac{i}{2\pi}\int_{A_n-\infty i}^{A_n+\infty i}s_{-,\nu}(w)dw &=-2\frac{(m-\nu+1)(\nu+1)}{(m+2)}t^\nu x^{m-\nu}+\tau_n^-(t,x).
\end{align*}





\end{enumerate}
\end{theorem}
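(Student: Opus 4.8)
La preuve suivra la stratégie classique de \cite[\S 18]{Watson} rappelée au \S\ref{z10}, adaptée aux fonctions $L_{m,\nu}$, qui jouent ici le rôle des fonctions de Bessel. Le plan est d'évaluer chacune des quatre intégrales au moyen du théorème des résidus, en fermant la droite verticale $\mathrm{Re}(w)=A_n$ par le rectangle de sommets $\pm iB$, $A_n\pm iB$, puis en faisant tendre $B\to\infty$. Comme $A_n$ est choisi strictement entre $\la_n$ et $\la_{n+1}$, tout revient à montrer que seules les contributions des pôles situés dans la bande $0<\mathrm{Re}(w)<A_n$ subsistent à la limite.

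Je commencerais par localiser les pôles de chaque intégrande. Dans la bande considérée, les seuls zéros de $L_{m,\nu}$ sont $\la_1,\ldots,\la_n$, et ils sont simples d'après le théorème \ref{deriveeLnorme}. Les fonctions $t_{-,\nu}$ et $s_{-,\nu}$ n'ont que $L_{m,\nu}$ au dénominateur, d'où des pôles uniquement aux $\la_k$ (ce qui explique l'absence de somme supplémentaire dans \ref{q2} et \ref{q4}); en revanche $t_{+,\nu}$ et $s_{+,\nu}$ contiennent le facteur $J_{\nu-m}(w)/J_\nu(w)$ (resp. $J_\nu(w)/J_{\nu-m}(w)$), ce qui ajoute des pôles simples aux zéros $\al$ de $J_\nu$ (resp. $\gamma$ de $J_{\nu-m}$) appartenant à $]0,A_n[$. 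Le lemme \ref{zerosimple} assure que ces zéros sont disjoints de $\mathcal{Z}_{m,\nu}$, de sorte que tous les pôles rencontrés sont simples et bien séparés.

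Viendrait ensuite le calcul des résidus. En $\la_k$, j'emploierais l'expression de $L'_{m,\nu}(\la_k)$ fournie par le théorème \ref{deriveeLnorme}, combinée à l'égalité $\|\vf_{\nu,k}^{(m)}\|^2_{L^2,\infty}=\|{\boldsymbol\vf}_{\nu,k}^{(m)}\|^2_m$ de \ref{x18}; le facteur $J_{\nu-m}(\la_k)/J_\nu(\la_k)$ se simplifie et le résidu reproduit exactement le $k$-ième terme de $\kappa_n^+,\kappa_n^-,\tau_n^+$ ou $\tau_n^-$, selon la fonction. En un zéro $\al$ de $J_\nu$, on utilise $L_{m,\nu}(\al)=J_{\nu+1}(\al)J_{\nu-m}(\al)$ et $J_\nu'(\al)=-J_{\nu+1}(\al)$, ce qui produit après simplification le terme $J_\nu(t\al)J_\nu(x\al)/J_{\nu+1}(\al)^2$ de \ref{q1} (de même pour $J_{\nu-m}$ dans \ref{q3}), exactement comme dans le calcul rappelé en note au \S\ref{z10}. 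Enfin, dans le cas $0\leq\nu\leq m$, l'origine devient un pôle simple: le développement \ref{L2ordre} de $L_{m,\nu}$ et les développements $J_p(z)=z^p/(2^p p!)+o(z^p)$ donnent le résidu en $0$, d'où le terme monomial supplémentaire proportionnel à $t^\nu x^\nu$, $t^{m-\nu}x^\nu$, $t^{m-\nu}x^{m-\nu}$ ou $t^\nu x^{m-\nu}$ selon le cas; pour $\nu\leq -1$ ou $\nu\geq m+1$, \ref{L1ordre} et l'ordre d'annulation des numérateurs montrent que l'intégrande est holomorphe en $0$, ce qui supprime ce terme. Une application du théorème des résidus, avec l'orientation convenable, fournit alors les identités annoncées.

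Il resterait à contrôler les côtés du rectangle. Sur les segments horizontaux $\mathrm{Im}(w)=\pm B$, les estimations asymptotiques \ref{besselineqtriv} majorent chaque intégrande par $C\,e^{(t+x-2)|\mathrm{Im}(w)|}$ (les facteurs exponentiels des numérateurs et dénominateurs se compensent de la même manière pour les quatre fonctions); comme $0<t,x<1$, cette borne tend vers $0$ et, la longueur des segments demeurant bornée par $A_n$, l'intégrale s'annule quand $B\to\infty$. Sur le segment imaginaire, j'invoquerais la parité: grâce à $J_p(-w)=(-1)^pJ_p(w)$ et à \ref{Lparite}, chacune des quatre fonctions est impaire, de sorte que son intégrale (en valeur principale lorsque $0\leq\nu\leq m$) sur $[-iB,iB]$ est nulle. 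Le point le plus délicat sera précisément ce contrôle uniforme des segments horizontaux, où il faut vérifier que \ref{besselineqtriv} s'applique uniformément le long de tout le segment et pour les quatre intégrandes simultanément; le second point technique est le calcul exact de la constante du résidu à l'origine dans le cas $0\leq\nu\leq m$, qui exige de manipuler avec soin les coefficients dominants de \ref{L2ordre} et des séries de Bessel.
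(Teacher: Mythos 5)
Votre d\'emonstration suit essentiellement la m\^eme d\'emarche que celle du texte : m\^eme contour rectangulaire ferm\'e sur la droite $\mathrm{Re}(w)=A_n$, m\^emes calculs de r\'esidus aux $\la_k$ (via \ref{deriveeLnorme} et \ref{x18}), aux z\'eros de $J_\nu$ et de $J_{\nu-m}$ pour $t_{+,\nu}$ et $s_{+,\nu}$, et \`a l'origine lorsque $0\leq\nu\leq m$, m\^emes majorations \ref{LLLLL} et \ref{besselineqtriv} pour faire dispara\^itre les c\^ot\'es horizontaux, et m\^eme argument d'imparit\'e sur l'axe imaginaire. La seule divergence est que le texte contourne le p\^ole en $w=0$ par un petit demi-cercle $D_\eps$ plac\'e \`a gauche de l'axe imaginaire (de sorte que l'origine est int\'erieure au contour), alors que vous proposez une valeur principale ; cela n'affecte que la comptabilit\'e de la constante devant le terme monomial, point que vous signalez d'ailleurs vous-m\^eme comme d\'elicat.
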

Afin de d\'emontrer ce th\'eor\`eme \ref{ttss}, nous commen\c{c}ons par calculer les r\'esidus des fonctions $t_{+,\nu},t_{-,\nu},s_{+,\nu}
$ et $s_{-,\nu}$ aux pôles respectifs et nous repr\'esenterons  $\kappa^+(t,x),\kappa^-(t,x),\tau^+(t,x)$ et $\tau^-(t,x)
$ comme int\'egrales des quatre premi\`eres fonctions le long d'un chemin pr\'ecis. Nous aurons besoin de quelques
estimations sur $L_{m,\nu}$ du type:  \begin{equation}\label{LLLLL}
 \bigl|L_{m,\nu}(\theta w)\bigr|\leq\frac{c_\nu}{(\theta|w|)}\exp\bigl(2\theta \bigl|Im(w)\bigr|\bigr),\quad
 \bigl|L_{m,\nu}(w)\bigr|\geq\frac{c'_\nu}{|w|}\exp\bigl(2\bigl|Im(w)\bigr|\bigr),
\end{equation}
avec  $c_\nu$ et $c_\nu'$ deux constantes r\'eelles positives non nulles et $w$ appartient \`a un ouvert contenant la
droite verticale d'abscisse $A_n$ (avec $0<\theta<1$). Cela sera une cons\'equence  du lemme suivant:
\begin{lemma}
 On note par $I_\nu$ la fonction de Bessel modifi\'ee d'ordre $\nu$ et  on pose
\[
 G_\nu(z):=I_{\nu+1}(z)I_{\nu-m}(z)+I_\nu(z)I_{\nu-m-1}(z)\quad \forall\, z\in \C.
\]
\begin{enumerate}
\item
On a pour tout $z\in \C$ et tout $\nu\in \Z$
\begin{equation}
G_{-\nu}(z)=G_{m+\nu}(z),
\end{equation}
\begin{equation}\label{LLLLL1}
L_{m,\nu}(iz)=i^{2\nu-m-1} G_\nu(z).
\end{equation}

\item
Pour $z$ assez large avec $|\arg(z)|<\frac{\pi}{2}$
, on a
\begin{equation}\label{LLLLL2}
G_{\nu+m}(z)=\frac{e^{2z}}{2\pi z}\Bigl(1-\frac{4\nu^2+2m^2+4\nu m+2m+1}{2z}+O(\frac{1}{z^2})\Bigr)\quad \text{si}\; \nu\in \N^\ast,
\end{equation}

et
\begin{equation}\label{LLLLL3}
 G_\nu(z)=\frac{e^{2z}}{2\pi z}\Bigl(2-\frac{2\nu^2+m^2-2\nu m+m}{z}+O(\frac{1}{z^2})\Bigr)\quad \text{si}\;  \nu\in
  \{0,1,\ldots,m\}.
\end{equation}

\end{enumerate}

\end{lemma}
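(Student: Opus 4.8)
The plan is to reduce both parts to elementary facts about the modified Bessel functions $I_\nu$, exploiting the relation $J_p(iz)=i^pI_p(z)$ (equivalently $I_p(z)=i^{-p}J_p(iz)$) that links them to the ordinary Bessel functions already analysed. For the second identity of part (1), I would substitute $w=iz$ directly into the expression \ref{Lexpression}, namely $L_{m,\nu}(w)=J_{\nu+1}(w)J_{\nu-m}(w)-J_\nu(w)J_{\nu-m-1}(w)$, and replace each value using $J_p(iz)=i^pI_p(z)$. The two products pick up the powers $i^{2\nu-m+1}$ and $i^{2\nu-m-1}$, which differ precisely by $i^2=-1$; factoring out the common $i^{2\nu-m-1}$ and using $i^2=-1$ turns the difference $J_{\nu+1}J_{\nu-m}-J_\nu J_{\nu-m-1}$ into the sum $I_{\nu+1}I_{\nu-m}+I_\nu I_{\nu-m-1}=G_\nu(z)$, so that $L_{m,\nu}(iz)=i^{2\nu-m-1}G_\nu(z)$. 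This step is purely algebraic and contains no analytic content; the only care required is in tracking the overall power of $i$.

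For the symmetry $G_{-\nu}(z)=G_{m+\nu}(z)$ there are two equivalent routes, and I would give the self-contained one. Since $I_{-p}=I_p$ for integer order $p$ (in contrast to $J_{-p}=(-1)^pJ_p$), substituting $-\nu$ into the definition yields $G_{-\nu}(z)=I_{\nu-1}(z)I_{\nu+m}(z)+I_\nu(z)I_{\nu+m+1}(z)$, and relabelling shows these are exactly the two summands of $G_{m+\nu}(z)$ in reversed order. Alternatively one can deduce it formally from the already established symmetry \ref{Lsymetrie}, $L_{m,-\nu}(z)=(-1)^mL_{m,\nu+m}(z)$: evaluating both sides at $iz$, inserting the identity just proved, and simplifying the powers of $i$ with $i^{4\nu}=1$ and $i^{2m}=(-1)^m$ makes the factors $(-1)^m$ cancel.

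For part (2) I would invoke the classical large-argument expansion $I_\nu(z)\sim \frac{e^z}{\sqrt{2\pi z}}\bigl(1-\frac{4\nu^2-1}{8z}+O(z^{-2})\bigr)$, valid for $|\arg z|<\frac{\pi}{2}$, where the recessive $e^{-z}$ branch is exponentially negligible and hence does not affect the displayed terms. Each of the two products $I_{\nu+1}I_{\nu-m}$ and $I_\nu I_{\nu-m-1}$ then carries the common prefactor $\frac{e^{2z}}{2\pi z}$, and the $z^{-1}$ coefficient of a product of two such series is simply the sum of the two individual $z^{-1}$ coefficients $-\frac{4p^2-1}{8}$. Adding the two products gives $G_\nu(z)=\frac{e^{2z}}{2\pi z}\bigl(2-\frac{Q_\nu}{z}+O(z^{-2})\bigr)$, where $Q_\nu$ is the quadratic obtained by summing the four contributions from the orders $\nu+1,\nu-m,\nu,\nu-m-1$. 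The formula for $\nu\in\{0,\dots,m\}$ is read off directly, and the formula for the range $\nu\geq m+1$ is obtained by writing it through $G_{\nu+m}$ after applying the index substitution furnished by the symmetry of part (1).

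The routine but error-prone step, and the one I would carry out with the most care, is the bookkeeping of the subleading coefficient: one must expand $(\nu+1)^2+(\nu-m)^2+\nu^2+(\nu-m-1)^2$, retain the additive constants exactly, and then verify that the normalisation matches in each of the two ranges of $\nu$ after the $G_{\nu+m}\!=\!G_{-\nu}$ rewriting. No genuinely new analytic estimate is needed beyond the standard asymptotic of $I_\nu$ and the observation that the dominant exponential governs the half-plane $|\arg z|<\frac{\pi}{2}$; the entire difficulty is combinatorial, in correctly assembling $Q_\nu$ and in tracking the powers of $i$ in part (1). Once these are in place, the estimates \ref{LLLLL} for $L_{m,\nu}$ follow immediately by taking moduli on the line $w=iz$.
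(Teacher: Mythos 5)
Your proposal follows essentially the same route as the paper: the identity \ref{LLLLL1} and the symmetry $G_{-\nu}=G_{m+\nu}$ by direct substitution using $J_p(iz)=i^pI_p(z)$ and $I_{-p}=I_p$, then the standard large-argument expansion of $I_p$ multiplied out termwise, exactly as in the paper's proof. One small caveat: carrying out the power-of-$i$ bookkeeping you describe actually yields $L_{m,\nu}(iz)=i^{2\nu-m+1}G_\nu(z)=-i^{2\nu-m-1}G_\nu(z)$, and the leading coefficient in \ref{LLLLL2} comes out as $2$ (as your own computation indicates), not $1$ --- but these are sign/normalisation slips already present in the paper's statement, and they are harmless because only the modulus of $L_{m,\nu}$ enters the estimates \ref{LLLLL}.
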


\begin{proof}
\leavevmode
 \begin{enumerate}
 \item
En utilisant les propri\'et\'es de recurrence  des fonctions de Bessel, nous avons:
\[
 \begin{split}
  G_\nu(z)&=I_{\nu+1}(z)I_{\nu-m}(z)+I_\nu(z)I_{\nu-m-1}(z)\\
&=I_{-\nu-1}(z)I_{-\nu+m}(z)+I_{-\nu}(z)I_{-\nu+m+1}(z),\quad \text{puisque} \, I_{-n}=I_n, \,
cf.\cite[9.6.6
]{Table2}\\
&=I_{(m-\nu)+1}(z)I_{(m-\nu)-m}(z)+I_{(m-\nu)}(z)I_{(m-\nu)-m-1}(z)\\
&=G_{m-\nu}(z).
 \end{split}
\]
Quant \`a l'identit\'e \ref{LLLLL1}, elle se d\'eduit ais\'ement de l'expression de $G_\nu$ et de la relation
$I_n(iz)=i^n I_n(z)$.
\item
Soit $n\in \N$ fix\'e. On a pour $z$ assez grand avec $|\arg(z)|<\frac{\pi}{2}$:
\begin{equation}\label{developpementz1}
I_n(z)=\frac{e^z}{\sqrt{2\pi z}}\bigl(1-\frac{4n^2-1}{8z}+O(\frac{1}{z^2}) \bigr)\quad   (\text{voir par exemple}\,  \cite[9.7.1]{Table2})
\end{equation}
Soit $\nu\in \N$.
 En utilisant
 \ref{developpementz1}, nous avons:
\begin{enumerate}
\item
\[
G_{\nu+m}(z)=\frac{e^{2z}}{2\pi z}\Bigl(1-\frac{4\nu^2+2m^2+4\nu m+2m+1}{2z}+O(\frac{1}{z^2})\Bigr)\quad \forall\,
|z|\gg 1,\, |\arg(z)|<\frac{\pi}{2}.
\]
\item
Si $ \nu\in \{0,1,\ldots,m \}$, on \'ecrit $
 G_\nu(z)=I_{\nu+1}(z)I_{m-\nu}+I_\nu(z)I_{m+1-\nu}(z)$
(on a utilis\'e que $I_{-n}=I_n$). Alors   on v\'erifie que:
\[
 G_\nu(z)=\frac{e^{2z}}{2\pi z}\Bigl(2-\frac{2\nu^2+m^2-2\nu m+m}{z}+O(\frac{1}{z^2})\Bigr)\quad \forall\,
|z|\gg 1,\, |\arg(z)|<\frac{\pi}{2}.\]
\end{enumerate}
\end{enumerate}
\end{proof}
Maintenant on peut d\'eduire les estimations \ref{LLLLL} en utilisant \ref{LLLLL1}, \ref{LLLLL2}, \ref{LLLLL3} et
\ref{Lparite}.

  Nous allons dresser la liste de tous  les r\'esidus possibles  des
fonctions $t_{+,\nu}$, $t_{-,\nu}$, $s_{+,\nu}$ et $s_{-,\nu}$ en leurs \'eventuels pôles en fonction de $\nu$ (voir $1.$,
$2.$ et $3.$ ci-dessous):
\begin{enumerate}
\item
Soit $\nu\in \Z$. Afin de  simplifier les notations,  on note par $\la$ un z\'ero positif de $L_{m,\nu}$,  par $\al$ un z\'ero positif non
nul de $J_\nu$ et par $\gamma$ un z\'ero positif non nul de $J_{\nu-m}$. On sait que ces z\'eros sont simples et on
v\'erifie que
 $L_{m,\nu}(\al)=-J_{\nu-m}(\al)J_{\nu}'(\al)$ et
 $L_{m,\nu}(\gamma)=-J_{\nu}(\gamma)J_{\nu-m}'(\gamma)$. Par cons\'equent:
 {\allowdisplaybreaks
\begin{equation}\label{x20}
\begin{split}
\mathrm{Res}_\la(t_{+,\nu})&=\frac{J_{\nu}(t\la)J_{\nu}(x\la)J_{\nu-m}(\la)}{J_{\nu}(\la)L_{m,\nu}'(\la)}
=\frac{J_{\nu}(t\la)J_{\nu}(x\la)}{2\|{\boldsymbol\vf}_{\nu,\la}^{(m)}\|_{m}^2} \quad(\text{on a utilis\'e
} \, \ref{deriveeLnorme} \, \text{et}\, \ref{x18}),\\
\mathrm{Res}_\al(t_{+,\nu})&=\frac{J_{\nu}(x\al)J_{\nu}(t\al)J_{\nu-m}(\al)}{J_{\nu}'(\al)L_{m,\nu}(\al)}
=-\frac{J_{\nu}(t\al)J_{\nu}(x\al)}{J_{\nu}'(\al)^2},\\
 \mathrm{Res}_\la\bigl( t_{-,\nu}
 \bigr)&=\frac{J_{\nu}(x\la)J_{\nu-m}(t\la)}{L_{m,\nu}'(\la)}=\frac{J_{\nu}(\la)}{J_{\nu-m}(\la)}
 \frac{J_{\nu}(x\la)J_{\nu-m}(\la t)}{\|{\boldsymbol\vf}^{(m)}_{\nu,\la}\|^2_{m}},\\
\mathrm{Res}_\la(s_{+,\nu})&=\frac{J_{\nu}(\la)}{J_{\nu-m}(\la)}\frac{J_{\nu-m}(t\la)J_{\nu-m}(x\la)}{L_{m,\nu}'(\la)}
=\frac{J_{\nu}(\la)^2}{J_{\nu-m}(\la)^2}\frac{J_{\nu-m}(t\la)J_{\nu-m}(x\la)}{\|{\boldsymbol\vf}^{(m)}_{\nu,\la}\|_{m}^2}\\
\mathrm{Res}_\gamma(s_{+,\nu})&=-\frac{J_{\nu}(\gamma)}{J_{\nu-m}'(\gamma)^2}
\frac{J_{\nu-m}(t\gamma)J_{\nu-m}(x\gamma)}{J_{\nu}(\gamma)}
=-\frac{J_{\nu-m}(t\gamma)J_{\nu-m}(x\gamma)}{J_{\nu-m+1}(\gamma)^2},\\
\mathrm{Res}_\la(s_{-,\nu})&=\frac{J_\nu(\la)}{J_{\nu-m}(\la)}\frac{J_{\nu-m}(\la x)J_{\nu}(\la t
 )}{L'_\nu(w)}=\frac{J_{\nu}(\la)}{J_{\nu-m}(\la)}\frac{J_{\nu-m}(\la x)J_{\nu}(\la
 t)}{\|{\boldsymbol\vf}^{(m)}_{\nu,\la}\|_{m}^2}.
\end{split}
\end{equation}
}

\item

Par contre ces fonctions ci-dessus peuvent, en fonction de $\nu$, avoir un pôle en $w=0$. Comme $J_p$ est d'ordre $|p|$ en z\'ero et en utilisant lemme \ref{zerosimple}, alors  on peut
  calculer explicitement l'ordre de ces fonctions en $w=0$ en fonction de $\nu$. En effet, lorsque $\nu\leq -1$ ou $\nu\geq m+1$, on a:
{\allowdisplaybreaks
\begin{align*}
\mathrm{Ord}_0( t_{+,\nu})&=(2\nu+\nu+m)-(\nu+2\nu+m-1)=1,\\
\mathrm{Ord}_0(t_{-,\nu})&=(2\nu +\nu+m)-(2\nu+m-1)=1,\\
\mathrm{Ord}_0 (s_{+,\nu})&=1,\\
\mathrm{Ord}_0(s_{-,\nu})&=1.
\end{align*}}
\item
Si $\nu\in \{0,1,\ldots,m\}$. Nous  montrons que  $t_{+,\nu}$, $t_{-,\nu}$, $s_{+,\nu}$ et $s_{-,\nu}$
  sont d'ordre $-1$ en $w=0$ pour cela on va
calculer leur r\'esidu en $w=0$. Les calculs suivants d\'ecoulent directement de  la formule \ref{ordreenzero} et
du lemme   \ref{zerosimple}. On \'etablit que:

\begin{equation}\label{x19}
\begin{split}
\mathrm{Res}_0(t_{+,\nu})&=2\frac{(m-\nu+1)(\nu+1)}{(m+2)}t^\nu x^\nu,\\
\mathrm{Res}_0(t_{-,\nu})&=2\frac{(m-\nu+1)(\nu+1)}{(m+2)}t^{m-\nu} x^\nu,\\
 \mathrm{Res}_0(s_{+,\nu})&=2\frac{(m-\nu+1)(\nu+1)}{(m+2)}t^{m-\nu} x^{m-\nu},\\
\mathrm{Res}_0(s_{-,\nu})&=2\frac{(m-\nu+1)(\nu+1)}{(m+2)}t^{\nu} x^{m-\nu}.\\
\end{split}
\end{equation}

\end{enumerate}

Maintenant on se propose de d\'emontrer le th\'eor\`eme \ref{ttss}. Soit $\nu \in \Z$. Commençons par exemple par $t_{+,\nu}$ (avec $\nu$ un entier quelconque).  Soit $A_n$ un r\'eel
positif strictement compris entre $\la_n$ et $\la_{n+1} $ deux z\'eros cons\'ecutifs de $L_{m,\nu}$, $B>0$ et $0<\eps\ll 1$.
  On consid\`ere  $R_n$ le chemin ferm\'e  dans $\C$
  suivant: $[iB,A_n+i B]\cup [A_n+iB,A_n-iB]\cup [-iB, A_n-iB]\cup
[-iB, -i\eps]\cup D_\eps \cup [i\eps,iB]$ o\`u $D_\eps$ est le demi-cercle de centre $w=0$ \`a
 gauche de l'axe imaginaire et de rayon $\eps$.
 On a par un simple calcul de r\'esidus et en utilisant \ref{x20}:
 \[
 \int_{R_n}t_{+,\nu}(w)dw=-\mathrm{Res}_0(t_{+,\nu})+\sum_{k=1}^n\frac{J_{\nu+m}(t\la_k)J_{\nu+m}(x\la_k)}{2\|{\boldsymbol\vf}_{\nu,k}^{(m)}\|_{m}^2}-\sum_{\substack{\al|J_{\nu+m}(\al)=0\\
0<\al<A_n}}\frac{J_{\nu+m}(t\al)J_{\nu+m}(x\al)}{J_{\nu+m+1}(\al)^2}.
 \]

On pose $w=u+iv$ avec $u,v\in \R$ et on suppose que $t,x>0$ avec $0<t+x<2$. On obtient, en utilisant \ref{LLLLL}
  et \ref{besselineqtriv}:
\begin{equation}\label{t+++}
 \begin{split}
  |t_{+,\nu}(w)|&=\Bigl|\frac{J_{\nu+m}(tw)J_{\nu+m}(xw)J_\nu(w)}{J_{\nu+m}(w)L_{m,\nu}(w)}\Bigr|\\
&\leq \frac{c_{+,\nu}}{\sqrt{tx}} \exp\bigl(-(2-(t+x))|v| \bigr),
  \end{split}
\end{equation}
pour tout $w$ dans un voisinage ouvert de la droite  verticale d'abscisse  $A_n$ ($c_{+,\nu}$ une constante r\'eelle). Par cons\'equent
\[
\lim_{B\mapsto \infty} \int_{-Bi}^{A_n-Bi}t_{+,\nu}(w)dw=\lim_{B\mapsto \infty} \int_{Bi}^{A_n+Bi}t_{+,\nu}(w)dw=0
\]
Quant \`a la contribution de $t_{+,\nu}$ sur la partie  $[-iB, -i\eps]\cup D_\eps \cup [i\eps,iB]
$,  nous utilisons
  le fait que $t_{+,\nu}$ est une fonction impaire $\bigl($cela r\'esulte de la relation \ref{Lparite} et que
 $J_p(-z)=(-1)^pJ_p(z),\; \forall\, p\in \N, \forall\, z\in \C$ $\bigr)$.
  Cela implique que l'int\'egrale le long du segment $[-Bi,-\eps i]\cup D_\eps\cup [i\eps ,iB]
   $  est donn\'ee par $\mathrm{Res}_0(t_{+,\nu})$. En combinant tout cela, nous obtenons:
\[
\frac{1}{2\pi i}\int_{A_n-\infty i}^{A_n+\infty i}t_{+,\nu}(w)dw=-\mathrm{Res}_0(t_{+,\nu})+\sum_{k=1}^n\frac{J_{\nu+m}(t\la_k)J_{\nu+m}(x\la_k)}{2\|{\boldsymbol\vf}_{\nu,k}^{(m)}\|_{m}^2}-\sum_{\substack{\al|J_{\nu+m}(\al)=0\\
0<\al<A_n}}\frac{J_{\nu+m}(t\al)J_{\nu+m}(x\al)}{J_{\nu+m+1}(\al)^2}.
\]
Et en utilisant \ref{x19} cela termine la preuve du th\'eor\`eme pour les formules en $t_{+,\nu}$ pour $\nu\in \Z$ quelconque.\\

De la m\^eme mani\`ere on peut \'etablir qu'il existe des constantes r\'eelles $c_{-,\nu},d_{+,\nu}$ et $d_{-,\nu}$ telles que:
\begin{align*}
  |t_{-,\nu}(w)|&\leq \frac{c_{-,\nu}}{\sqrt{tx}} \exp\bigl(-(2-(t+x))|v| \bigr),\\
  |s_{+,\nu}(w)|&\leq \frac{d_{+,\nu}}{\sqrt{tx}} \exp\bigl(-(2-(t+x))|v| \bigr),\\
  |s_{-,\nu}(w)|&\leq \frac{d_{-,\nu}}{\sqrt{tx}} \exp\bigl(-(2-(t+x))|v| \bigr),
\end{align*}
valables pour tout $w$ dans un domaine contenant la droite verticale d'abscisse $A_n$ et on suit le m\^eme raisonnement pour trouver les formules restantes. Ce qui termine la preuve du th\'eor\`eme \ref{ttss}.


\begin{lemma}
 Soient $n\in \N$ et $k\in \N$. Si $\frac{k-2}{4}\in \N$ alors il existe des r\'eels $b_{1,k},\ldots,b_{\frac{k-2}{4},k}$ tels que
\[
 \int_0^1t^{k+1+n}J_n(tw)dt=\frac{J_{n+1}(w)}{w}-k\frac{J_{n+2}(w)}{w^2}+\sum_{i=1}^{\frac{k-2}{4}}b_{i,k}\frac{J_{n+i+2}(w)}{w^{i+2}},\quad \forall\, w\in \C^\ast.
\]
\end{lemma}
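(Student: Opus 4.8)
The plan is to reduce the claim to a single-step recurrence obtained by one integration by parts, and then to unroll that recurrence by a descending induction on $k$. The only analytic input needed is the Bessel recurrence $\frac{d}{dz}\bigl(z^{\nu+1}J_{\nu+1}(z)\bigr)=z^{\nu+1}J_\nu(z)$ (appendix \ref{B5}); after the substitution $z=tw$ this reads $t^{n+1}J_n(tw)=\frac{1}{w}\frac{d}{dt}\bigl(t^{n+1}J_{n+1}(tw)\bigr)$, which exhibits $t^{n+1}J_n(tw)\,dt$ as an exact differential.

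First I would set $I_{k,n}(w):=\int_0^1 t^{k+1+n}J_n(tw)\,dt$ and record the base case $I_{0,n}(w)=\int_0^1 t^{n+1}J_n(tw)\,dt=\frac{J_{n+1}(w)}{w}$, which is immediate from the displayed exact differential together with $J_{n+1}(0)=0$ (valid since $n\ge 0$), so that the boundary term at $t=0$ drops out. Next, writing $t^{k+1+n}=t^{k}\cdot t^{n+1}$ and integrating by parts with $u=t^{k}$ and $dv=t^{n+1}J_n(tw)\,dt$, so that $v=\frac{t^{n+1}J_{n+1}(tw)}{w}$, the boundary contribution again survives only at $t=1$ and I obtain
\[
I_{k,n}(w)=\frac{J_{n+1}(w)}{w}-\frac{k}{w}\,I_{k-2,n+1}(w),\qquad w\in\C^\ast.
\]

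With this recurrence the statement follows by iteration. One application isolates the two explicit leading terms $\frac{J_{n+1}(w)}{w}-k\frac{J_{n+2}(w)}{w^{2}}$ and leaves $\frac{k(k-2)}{w^{2}}I_{k-4,n+2}(w)$; each further application peels off one more term $b_{i,k}\frac{J_{n+i+2}(w)}{w^{i+2}}$ with $b_{i,k}=(-1)^{i+1}k(k-2)\cdots(k-2i)$, a product of integers and hence real. The hypothesis that $k$ is even (forced by $\frac{k-2}{4}\in\N$) guarantees that the descending sequence $k,k-2,k-4,\ldots$ reaches $0$, so the iteration terminates at the base case $I_{0,\cdot}$ rather than hitting a non-integer index; equivalently the product defining the coefficients eventually acquires the factor $k-2i=0$, which is exactly what makes the expansion a finite sum. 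I would organize the bookkeeping as a clean induction: assuming the expansion for $I_{k-2,n+1}$, substitute it into the recurrence and reindex, matching the two displayed leading coefficients $1$ and $-k$ against the base of the induction.

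The genuinely routine part is this coefficient bookkeeping. The one point deserving a word of care is the vanishing of every boundary term at $t=0$: after $j$ steps the relevant integrand factor is $t^{n+j+1}J_{n+j+1}(tw)$, and $J_{n+j+1}(0)=0$ precisely because $n\ge 0$, so the inner endpoint never contributes. I therefore do not expect a real obstacle here — the whole content is the exactness furnished by \ref{B5} together with the termination of the recursion guaranteed by the parity of $k$.
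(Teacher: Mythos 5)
Your proof is correct and follows essentially the same route as the paper: both rest on iterated integration by parts via the recurrence \ref{B5} (the paper integrates $\int_0^w t^{k+n+1}J_n(t)\,dt$ and divides by $w^{k+n+2}$ at the end, while you keep the variable $tw$ throughout and package the step as the recurrence $I_{k,n}=\frac{J_{n+1}(w)}{w}-\frac{k}{w}I_{k-2,n+1}$ --- a purely cosmetic difference). Note only that the iteration, in your version exactly as in the paper's, produces a sum running up to $i=\frac{k-2}{2}$ rather than the $\frac{k-2}{4}$ printed in the statement; this discrepancy lies in the statement itself, not in your argument.
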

\begin{proof}
En utilisant la formule \ref{B5},
on a par une int\'egration par parties:\footnote{Le signe $\int_0^w$ signifie l'int\'egration le long du segment $[0,w]$ dans le plan complexe.}
{\allowdisplaybreaks
\begin{align*}
\int_0^w t^{k+n+1}J_n(t)dt&=\bigl[t^{k+n+1}J_{n+1}(t) \bigr]_0^w-k\int_0^w t^{k+n}J_{n+1}(t)dt\\
&=w^{k+n+1}J_{n+1}(w)-k\bigl[t^{k+n}J_{n+2}(t) \bigr]_0^w+k(k-2)\int_0^w t^{k+n-1}J_{n+2}(t)dt\\
&=w^{k+n+1}J_{n+1}(w)-kw^{k+n}J_{n+2}(w)\\
&+k(k-2)\bigl[t^{k+n-1}J_{n+3}(t) \bigr]_0^w-k(k-2)(k-4)\int_0^w t^{k+n-2}J_{n+3}(t)dt\\
&=w^{k+n+1}J_{n+1}(w)+\sum_{i=0}^p (-1)^{i+1}k(k-2)(k-4)\cdots (k-2i)w^{k+n-i}J_{n+i+2}(w)\\
&+(-1)^{p+1}k(k-2)\cdots (k-2(i+1))\int_0^w t^{k+n-p-1}J_{n+p+2}(t)dt.
\end{align*}}
On s'arr\^ete lorsque $(k+n-p-1)-1=n+p+2$, c'est \`a dire lorsque $p=\frac{k-4}{2}$. On termine la preuve en notant que
$
\int_0^1 t^{k+n+1}J_n(tw)dt=\frac{1}{w^{k+n+2}}\int_0^w t^{k+n+1} J_n(t)dt$ pour tout $w\neq 0$.
\end{proof}

\begin{Corollaire}\label{x21}
Soient $p$ et $q$ deux entiers positifs sup\'erieurs \`a 2. Il existe des constantes r\'eelles $c_1,c_2,\ldots,c_{{}_{\max(p-2,q-2)}}$ telles que
\[\int_0^1\bigl(t^{2p+n+1}-t^{2q+n+1}\bigr)J_n(tw)dt=2\frac{q-p}{w^2}J_{n+2}(w)+\sum_{i=1}^{\max(p-2,q-2)}c_i \frac{J_{n+i+2}(w)}{w^{i+2}}.\]
\end{Corollaire}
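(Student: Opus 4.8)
The plan is to obtain the Corollary as an immediate consequence of the preceding Lemma, applied twice and then subtracted. Since $p,q\geq 2$, both $2p$ and $2q$ are even integers at least $4$, so the Lemma applies with $k=2p$ and with $k=2q$ respectively; the truncation index furnished by the Lemma's proof is then $\frac{2p-4}{2}=p-2$ in the first case and $\frac{2q-4}{2}=q-2$ in the second. First I would write down the two resulting identities
\[
\int_0^1 t^{2p+n+1}J_n(tw)\,dt=\frac{J_{n+1}(w)}{w}-2p\,\frac{J_{n+2}(w)}{w^2}+\sum_{i=1}^{p-2}b_{i,2p}\frac{J_{n+i+2}(w)}{w^{i+2}},
\]
\[
\int_0^1 t^{2q+n+1}J_n(tw)\,dt=\frac{J_{n+1}(w)}{w}-2q\,\frac{J_{n+2}(w)}{w^2}+\sum_{i=1}^{q-2}b_{i,2q}\frac{J_{n+i+2}(w)}{w^{i+2}},
\]
both valid for every $w\in\C^\ast$.

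Next I would subtract the second identity from the first. The key observation is that the coefficient of $\frac{J_{n+1}(w)}{w}$ produced by the Lemma equals $1$ and is independent of $k$; hence these two terms cancel exactly in the difference. The term $\frac{J_{n+2}(w)}{w^2}$ carries the coefficient $-k$, so its contribution to the difference is $-2p-(-2q)=2(q-p)$, which yields precisely the announced leading term $2\frac{q-p}{w^2}J_{n+2}(w)$.

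Finally, to collect the higher-order terms I would extend both finite sums to the common range $1\le i\le\max(p-2,q-2)$, setting $b_{i,2p}=0$ for $i>p-2$ and $b_{i,2q}=0$ for $i>q-2$, and then define $c_i:=b_{i,2p}-b_{i,2q}$. Since the coefficients supplied by the Lemma are real, each $c_i$ is a real constant, and one reads off
\[
\int_0^1\bigl(t^{2p+n+1}-t^{2q+n+1}\bigr)J_n(tw)\,dt=2\frac{q-p}{w^2}J_{n+2}(w)+\sum_{i=1}^{\max(p-2,q-2)}c_i\frac{J_{n+i+2}(w)}{w^{i+2}}.
\]
There is no real obstacle here: the single point that actually matters is the cancellation of the $\frac{J_{n+1}(w)}{w}$ term, which rests entirely on the $k$-independence of its coefficient in the Lemma, everything else being bookkeeping of the summation ranges.
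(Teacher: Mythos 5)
Your proof is correct and takes exactly the route the paper intends: the paper's own proof consists of the single sentence that the corollary is an immediate consequence of the preceding lemma, and you have simply filled in the subtraction, the cancellation of the $J_{n+1}(w)/w$ terms, and the bookkeeping of the summation ranges. A minor point in your favour: you correctly use the truncation index $p-2$ coming from the lemma's \emph{proof} (stopping at $\frac{k-4}{2}$ with $k=2p$), which is what the corollary's statement requires, rather than the bound $\frac{k-2}{4}$ written in the lemma's statement.
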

\begin{proof}
C'est une cons\'equence imm\'ediate du lemme pr\'ec\'edent.
\end{proof}
En fonction de $\nu$, nous allons \'etablir que la fonction r\'eelle $x\mapsto x^{2p+\nu+1}-x^{2q+\nu+1}$ admet
un d\'eveloppement de type Bessel-Fourier dans le contexte de cet article. C'est l'objet
des th\'eor\`emes \ref{x22} et \ref{x23} qui  constitueront le noyau de la preuve du th\'eor\`eme \ref{x26}.
\begin{theorem}\label{x22}
Soit $\nu\in \Z$. Si $\nu\leq -1$ ou $\nu\geq m+1$ alors  pour $p,q\in \N$ assez grands, il existe des constantes r\'eelles $l_k$ et $l'_k$ pour tout $k\in \N^\ast$ telles  qu'on a convergence uniforme sur  $[0,1]$ des s\'eries suivantes:
\begin{enumerate}
\item
\begin{enumerate}
\item
\[
\sum_{k=1}^\infty   l_k
x^\frac{1}{2}\frac{J_{\nu}(x\la_k)}{2\|{\boldsymbol\vf}_{\nu,k}^{(m)}\|_{m}^2}=x^{2p+\nu+\frac{1}{2}
}-x^{2q+\nu+\frac{1}{2}},
\]
\item
\[
\sum_{k=1}^\infty   l'_k x^\frac{1}{2}\frac{J_{\nu}(x\la_k)}{2\|{\boldsymbol\vf}_{\nu,k}^{(m)}\|_{m}^2}=0,
\]
\end{enumerate}

\item
\begin{enumerate}
\item
\[
\sum_{k=1}^\infty l'_k \frac{J_{\nu}(\la_k)}{J_{\nu-m}(\la_k)}x^\frac{1}{2}\frac{J_{\nu-m}(x\la_k)}{\|{\boldsymbol\vf}_{\nu,k}^{(m)}\|_{m}^2}
=x^{2p+\nu-m+\frac{1}{2}}-x^{2q+\nu-m+\frac{1}{2}},
\]
\item
\[
\sum_{k=1}^\infty l_k \frac{J_{\nu}(\la_k)}{J_{\nu-m}(\la_k)}x^\frac{1}{2}\frac{J_{\nu-m}(x\la_k)}{\|{\boldsymbol\vf}_{\nu,k}^{(m)}\|_{m}^2}
=0.\]
\end{enumerate}
\end{enumerate}

\end{theorem}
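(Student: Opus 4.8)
L'idée est de transposer la méthode classique rappelée au \S\ref{z10} en s'appuyant sur les représentations en intégrale de contour du théorème \ref{ttss}. On se ramène d'abord au cas $\nu\geq m+1$: lorsque $\nu\leq -1$, la relation $\mathcal{Z}_{m,\nu}=\mathcal{Z}_{m,m-\nu}$ (conséquence de \ref{x3}) et les identités $J_{-p}=(-1)^pJ_p$ ramènent l'énoncé à celui associé à $\tilde\nu:=m-\nu\geq m+1$, les développements (1) et (2) étant simplement échangés. On fixe donc $\nu\geq m+1$, on pose $g(t):=t^{2p+\nu}-t^{2q+\nu}$ (continue sur $[0,1]$ et nulle en $t=1$), et on définit
\[
l_k:=2\int_0^1\bigl(t^{2p+\nu+1}-t^{2q+\nu+1}\bigr)J_{\nu}(\la_kt)\,dt,\qquad
l'_k:=2\int_0^1\bigl(t^{2p+\nu-m+1}-t^{2q+\nu-m+1}\bigr)J_{\nu-m}(\la_kt)\,dt.
\]
Le corollaire \ref{x21} en fournit une forme close contenant un facteur $|w|^{-2}$ (et des puissances supérieures), qui sera décisif. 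Le facteur $x^{\frac12}$ des quatre identités n'est qu'une normalisation (il rend $x^{\frac12}J_{\nu}(x\la_k)$ uniformément borné) et ne change rien à la convergence uniforme sur $[0,1]$.

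Pour (1)(a), j'intégrerais le noyau $\kappa_n^+$ contre $g$ pour la mesure $t\,dt$: par définition de $l_k$ et par \ref{x18}, la somme partielle $\sum_{k=1}^n l_k\,\frac{J_{\nu}(x\la_k)}{2\|{\boldsymbol\vf}_{\nu,k}^{(m)}\|_m^2}$ coïncide, à une constante près, avec $\int_0^1 g(t)\kappa_n^+(t,x)\,t\,dt$. On y injecte la formule \ref{q1}: la somme sur les zéros $\al$ de $J_{\nu}$ y reconstitue la somme partielle de la série de Fourier-Bessel classique de $g$, qui converge uniformément vers $x^{2p+\nu}-x^{2q+\nu}$ sur $[0,1]$ puisque $g(1)=0$ (voir \cite[\S 18.26]{Watson}). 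Il reste à contrôler l'intégrale de contour, qui par Fubini s'écrit
\[
\frac{i}{2\pi}\int_{A_n-\infty i}^{A_n+\infty i}\Bigl(\int_0^1 g(t)J_{\nu}(tw)\,t\,dt\Bigr)\frac{J_{\nu}(xw)J_{\nu-m}(w)}{J_{\nu}(w)L_{m,\nu}(w)}\,dw.
\]
Le facteur $\int_0^1 g(t)J_{\nu}(tw)\,t\,dt=O(|w|^{-2})$ fourni par \ref{x21}, combiné aux estimations \ref{besselineqtriv} et \ref{LLLLL} valables sur un voisinage de la droite $\mathrm{Re}\,w=A_n$, force cette intégrale à tendre vers $0$ uniformément en $x\in[0,1]$ quand $n\to\infty$, pour $p,q$ assez grands. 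D'où (1)(a); l'identité (2)(a) se démontre à l'identique à partir du noyau $\tau_n^+$, de la fonction $s_{+,\nu}$ et de la formule \ref{q3}, les zéros de $J_{\nu-m}$ fournissant cette fois le développement classique de $x^{2p+\nu-m}-x^{2q+\nu-m}$.

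Les identités (1)(b) et (2)(b), toutes deux nulles, sont plus directes: elles reposent sur les noyaux croisés $\kappa_n^-$ et $\tau_n^-$, dont les fonctions associées $t_{-,\nu}$ et $s_{-,\nu}$ n'ont de pôles qu'aux zéros de $L_{m,\nu}$ (ni $J_{\nu}$ ni $J_{\nu-m}$ au dénominateur). Les formules \ref{q2} et \ref{q4} ne comportent donc aucune somme auxiliaire; de plus, comme $\nu\geq m+1$, ces fonctions sont d'ordre $1$ en $w=0$ et ne produisent aucun résidu à l'origine (même après remplacement de $J_{\nu}(tw)$ par son intégrale). En intégrant contre $g$ et en passant à la limite par le même argument de décroissance, la somme complète des résidus aux $\la_k$ tend vers $0$ — le lien entre ces résidus et $\|{\boldsymbol\vf}_{\nu,k}^{(m)}\|_m$ provenant de \ref{deriveeLnorme} et \ref{x20} — ce qui donne exactement (1)(b) et (2)(b).

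Le point délicat sera l'annulation uniforme des intégrales de contour lorsque $n\to\infty$: il faut maîtriser simultanément le comportement de $J_{\nu}(xw)/\bigl(J_{\nu}(w)L_{m,\nu}(w)\bigr)$ (et de ses analogues) près de la droite verticale $\mathrm{Re}\,w=A_n$, ainsi que la dégénérescence aux bords $x\to 0$ et $x\to 1$. C'est précisément pour gagner la décroissance $O(|w|^{-2})$ requise que l'on substitue à $J_{\nu}(tw)$ son intégrale via \ref{x21} et que l'on impose $p,q$ grands; l'uniformité en $x$ résulte alors des majorations \ref{besselineqtriv} et des minorations \ref{LLLLL}.
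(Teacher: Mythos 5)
Your proposal is correct and follows essentially the same route as the paper: you represent the partial sums of the four series as integrals of the kernels $\kappa_n^{\pm},\tau_n^{\pm}$ against $t^{2p+\ast+1}-t^{2q+\ast+1}$, invoke Theorem \ref{ttss} to trade these for contour integrals plus (for the $+$ kernels) classical Fourier--Bessel partial sums, kill the contour integrals as $n\to\infty$ via the $O(|w|^{-2})$ decay from Corollary \ref{x21} combined with \ref{besselineqtriv} and \ref{LLLLL}, and identify the limits of the auxiliary sums with the classical expansions of functions vanishing at $x=1$, exactly as in the text. The only deviation is your preliminary reduction of $\nu\le -1$ to $m-\nu\ge m+1$, which the paper does not need (its argument uses only that the integrands have order $\ge 1$ at $w=0$, true in both cases) and which would anyway require tracking the factor $J_{\nu-m}(\la)^2/J_{\nu}(\la)^2$ relating $\|{\boldsymbol\vf}_{\nu,k}^{(m)}\|_m$ to $\|{\boldsymbol\vf}_{m-\nu,k}^{(m)}\|_m$.
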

\begin{proof}
Soit $\nu\in\Z$. On fixe $p$ et $q$ deux entiers  sup\'erieurs \`a $|\nu|$.
\begin{enumerate}
\item
\begin{enumerate}
\item
D'apr\`es  le corollaire \ref{x21},
 il existe des constantes r\'eelles $c_{i,\nu}$ telles que pour tout $w$ dans un ouvert ne contenant pas les pôles
 de $t_{+,\nu}$, on a
\begin{align*} \int_0^1\bigl(t^{2p+\nu+1}-t^{2q+\nu+1}\bigr)&t_{+,\nu}(w)dt=\int_0^1\bigl(t^{2p+\nu+1}-t^{2q+\nu+1}\bigr)J_{\nu}(tw)dt\frac{J_{\nu}(xw)J_{\nu-m}(w)}{J_{\nu}(w)L_{m,\nu}(w)},\\
 &=2\frac{q-p}{w^2}J_{\nu+2}(w)\frac{J_{\nu}(xw)J_{\nu-m}(w)}{J_{\nu}(w)L_{m,\nu}(w)}+\sum_{i=1}^{\max(p-2,q-2)}c_{i,\nu} \frac{J_{n+i+2}(w)}{w^{i+2}}\frac{J_{\nu}(xw)J_{\nu-m}(w)}{J_{\nu}(w)L_{m,\nu}(w)}.
\end{align*}
En utilisant   \ref{LLLLL}, il existe une constante $c_{\nu,++}$ telle que:
\begin{align*}
\Biggl|\int_{A_n-i\infty}^{A_n+i\infty}\frac{J_{n+i+2}(w)}{w^{i+2}}\frac{J_{\nu}(xw)J_{\nu-m}(w)}{J_{\nu}(w)L_{m,\nu}(w)}dw\Biggr|&\leq \int_{A_n-i\infty}^{A_n-i\infty}\frac{c_{\nu,++}}{\sqrt{x}|w|^{i+2}}\exp\bigl(-(1-x)|v| \bigr)|dw|\\
&\leq \frac{c_{\nu,++}}{\sqrt{x}}\int_{-\infty}^\infty \frac{1}{(A_n^2+v^2)}dv,\quad \text{puisque} \, 0<x<1\\
&=\frac{\pi c_{\nu,++}}{\sqrt{x}A_n}.
\end{align*}
Par suite, il existe une constante $C_\nu$ telle que
\begin{equation}\label{w1}
\Biggl|\int_{A_n-i\infty}^{A_n+i\infty}\int_0^1\bigl(t^{2p+\nu+1}-t^{2q+\nu+1}\bigr)x^\frac{1}{2}
t_{+,\nu}(w)dt\Biggr|\leq \frac{C_\nu}{A_n},\quad \forall\, n\in \N^\ast, \, x\in [0,1[.
\end{equation}
 Notons que
$\int_{A_n-i\infty}^{A_n+i\infty}\int_0^1\bigl(t^{2p+\nu+1}-t^{2q+\nu+1}\bigr)t_{+,\nu}(w)dt
=\int_0^1\bigl(t^{2p+\nu+1}-t^{2q+\nu+1}\bigr)\int_{A_n-i\infty}^{A_n+i\infty} t_{+,\nu}(w)dt.$ En effet, pour
$x\in ]0,1[$ fix\'e, on a $t_{+,\nu}$ consid\'er\'ee comme une fonction en $t$ et $w$ est absolument
int\'egrable (d'apr\`es \ref{t+++}).

On suppose que $\nu\leq -1$ ou $\nu\geq m+1$.
En utilisant  \ref{q1} du th\'eor\`eme \ref{ttss}, \ref{w1} devient:
\begin{equation}\label{w2}
 \Bigl|
\int_0^1\bigl(t^{2p+\nu+1}-t^{2q+\nu+1}\bigr)x^{\frac{1}{2}}\kappa_n^+(t,x)dt-\sum_{\substack{\al|J_{\nu}(\al)=0\\
0<\al<A_n}}
\int_0^1\bigl(t^{2p+\nu+1}-t^{2q+\nu+1}\bigr)x^{\frac{1}{2}}\frac{J_{\nu}(t\al)J_{\nu}(x\al)}{J_{\nu+1}(\al)^2}
\Bigr|\leq \frac{C_\nu}{A_n},
\end{equation}
 pour tout $x\in [0,1]$ et $n\in \N$.\\

 Montrons que  la somme \[
\sum_{\substack{\al|J_{\nu}(\al)=0\\
0<\al<A_n}}
\int_0^1\bigl(t^{2p+\nu+1}-t^{2q+\nu+1}\bigr)x^{\frac{1}{2}}\frac{J_{\nu}(t\al)J_{\nu}(x\al)}{J_{\nu+1}(\al)^2},\]
converge uniform\'ement vers  la fonction
 $x\mapsto
x^{2p+\nu+\frac{1}{2}}-x^{2q+\nu+\frac{1}{2}}$ sur $[0,1]$ lorsque $n$ tend vers l'infini.
En effet, on a d'une part  la fonction $x\mapsto x^{2p+\nu}-x^{2p+\nu}$ est d\'erivable sur $]0,1]$
et s'annule en $x=1$ alors (d'apr\`es \cite[\S 18.26]{Watson}) son d\'eveloppement en s\'erie de Fourier-Bessel converge
uniform\'ement vers cette fonction sur un intervalle de la forme $[1-\delta,1]$ avec $\delta>0$ quelconque.
  D'autre part,  le r\'esultat du  (\cite[p. 617]{Watson} voir premier paragraphe)  nous fournit la convergence sur
  $[0,1-\delta]$, avec $\delta>0$  arbitraire. En combinant cela avec \ref{w2}, on d\'eduit que:
  \begin{equation}\label{w3}
\lim_{n\mapsto \infty}
\int_0^1\bigl(t^{2p+\nu+1}-t^{2q+\nu+1}\bigr)x^{\frac{1}{2}}\kappa_n^+(t,x)dt=x^{2p+\nu+\frac{1}{2}}
-x^{2p+\nu+\frac{1}{2}},
\end{equation}
  uniform\'ement sur $[0,1]$.

  Si l'on pose
  $l_k:=\int_0^1\bigl(t^{2p+\nu+1}-t^{2q+\nu+1}\bigr)J_{\nu}(t\la_k)dt$. Alors, l'assertion pr\'ec\'edente
  devient:
\[
\sum_{k=1}^\infty   l_k x^\frac{1}{2}\frac{J_{\nu}(x\la_k)}{2\|{\boldsymbol\vf}_{\nu,k}^{(m)}\|_{m}^2}=x^{2p+\nu+\frac{1}{2}
}-x^{2q+\nu+\frac{1}{2}},
\]
uniform\'ement sur $[0,1]$.
\item
Si l'on consid\`ere  $t_{-,\nu}$  et en utilisant le m\^eme raisonnement pr\'ec\'edent et \ref{q2}, on obtient
avec le m\^eme raisonnement pr\'ec\'edent que:
 \begin{equation}
 \lim_{n\mapsto \infty}
\int_0^1\bigl(t^{2p+\nu+1}-t^{2q+\nu+1}\bigr)x^{\frac{1}{2}}\kappa_n^-(t,x)dt=0,
\end{equation}
uniform\'ement sur $[0,1]$. En posant
$l_k':=\frac{J_\nu(\la_k)}{J_{m-\nu}(\la_k)}\int_0^1\bigl(t^{2p+\nu+1}-t^{2q+\nu+1}\bigr)J_{m-\nu}(t\la_k)dt$, la
derni\`ere assertion devient:
\[\sum_{k=1}^\infty   l'_k x^\frac{1}{2}\frac{J_{\nu}(x\la_k)}{2\|{\boldsymbol\vf}_{\nu,k}^{(m)}\|_{m}^2}=0,\quad \forall\, x\in [0,1].\]
 uniform\'ement
 sur $[0,1]$.
\end{enumerate}
\item

\begin{enumerate}
\item Par application du \ref{x21}, on montre aussi que:
\begin{align*} \int_0^1\bigl(t^{2p+\nu+1}-t^{2q+\nu+1}\bigr)&s_{+,\nu}(w)dt=\int_0^1\bigl(t^{2p+\nu-m+1}-t^{2q+\nu-m+1}\bigr)J_{\nu-m}(tw) dt \frac{J_\nu(w)}{J_{\nu-m}(w)}\frac{J_{\nu-m}(xw)J_{\nu-m}(w)}{J_{\nu-m}(w)L_{m,\nu}(w)},\\ &=2\frac{q-p}{w^2}J_{\nu-m+2}(w)\frac{J_\nu(w)}{J_{\nu-m}(w)}\frac{J_{\nu-m}(xw)J_{\nu-m}(w)}{J_{\nu-m}(w)L_{m,\nu}(w)}\\
&+\sum_{i=1}^{\max(p-2,q-2)}c_{i,\nu-m} \frac{J_{n+i+2}(w)}{w^{i+2}}\frac{J_\nu(w)}{J_{\nu-m}(w)}\frac{J_{\nu-m}(xw)J_{\nu-m}(w)}{J_{\nu-m}(w)L_{m,\nu}(w)}.\\
\end{align*}
Et comme avant, on peut trouver $C_\nu'$ une constante r\'eelle telle que:
\[
\Biggl|\int_{A_n-i\infty}^{A_n+i\infty}\int_0^1\bigl(t^{2p+\nu-m+1}-t^{2q+\nu-m+1}\bigr)s_{+,\nu}(w)dt\Biggr|\leq \frac{C_\nu}{\sqrt{x}A_n},\quad \forall\, n\in \N^\ast, \,\forall\, x\in ]0,1[.
\]
Cette derni\`ere in\'egalit\'e combin\'ee avec \ref{q3},  nous permet  de d\'eduire que:
\begin{align*}
\lim_{
n\mapsto \infty}\int_0^1\bigl(t^{2p+\nu-m+1}-t^{2q+\nu-m+1}\bigr)&x^\frac{1}{2}\tau_n^+(t,x)dt\\
&=\sum_{\substack{\gamma|J_{\nu-m}(\gamma)=0\\
0<\gamma}}\int_0^1\bigl(t^{2p+\nu-m+1}-t^{2q+\nu-m+1}\bigr)
J_{\nu-m}(t\gamma)dt x^\frac{1}{2}\frac{J_{\nu-m}(x\gamma)}{J_{\nu-m+1}(\gamma)^2}dt\\
&=x^{2p+\nu-m+\frac{1}{2}}-x^{2q+\nu-m+\frac{1}{2}},
\end{align*}
uniform\'ement sur $[0,1]$. En d'autres termes, la somme suivante converge uniform\'ement vers $x^{2p+\nu-m+\frac{1}{2}}-x^{2q+\nu-m+\frac{1}{2}}$ sur $[0,1]$:
\begin{align*}
\sum_{k=1}^\infty l'_k \frac{J_{\nu}(\la_k)}{J_{\nu-m}(\la_k)}x^\frac{1}{2}\frac{J_{\nu-m}(x\la_k)}{\|{\boldsymbol\vf}_{\nu,k}^{(m)}\|_{m}^2}
=x^{2p+\nu-m+\frac{1}{2}}-x^{2q+\nu-m+\frac{1}{2}},\quad \forall\, x\in [0,1].
\end{align*}
o\`u $l'_k=\frac{J_\nu(\la_k)}{J_{\nu-m}(\la_k)}\int_0^1\bigl(t^{2p+m-\nu+1}-t^{2q+m-\nu+1}\bigr)J_{m-\nu}(t\la_k)dt$.

\item Pour la derni\`ere formule, on consid\`ere $s_{-,\nu}$ et on montre comme avant que (en utilisant \ref{q4}):
\[
\sum_{k=1}^\infty l_k \frac{J_{\nu}(\la_k)}{J_{\nu-m}(\la_k)}\frac{J_{\nu-m}(x\la_k)}{\|{\boldsymbol\vf}_{\nu,k}^{(m)}\|_{m}^2}
=0, \quad \forall\, x\in [0,1].
\]
avec convergence uniforme sur $[0,1]$.
\end{enumerate}
\end{enumerate}
\end{proof}

Le th\'eor\`eme suivant traite le cas restant, c-\`a-d $\nu\in \{0,1,\ldots,m\}$:
\begin{theorem}\label{x23}
Pour tout $\nu\in \{0,1,\ldots,m\}$, il existe $\delta_\nu\in \R^\ast$ tel que pour tout  $x\in [0,1]$:

\begin{enumerate}
\item
\begin{enumerate}
\item
\[
\delta_\nu x^{\nu+\frac{1}{2}}= \sum_{k=1}^\infty   l_k
x^\frac{1}{2}\frac{J_{\nu}(x\la_k)}{2\|{\boldsymbol\vf}_{\nu,k}^{(m)}\|_{m}^2}-\bigl(x^{2p+\nu+\frac{1}{2}}-x^{2q+\nu
+\frac{1}{2}}\bigr),
\]
\item
\[
\delta_{m-\nu}
x^{\nu+\frac{1}{2}}= \sum_{k=1}^\infty   l'_k
x^\frac{1}{2}\frac{J_{\nu}(x\la_k)}{2\|{\boldsymbol\vf}_{\nu,k}^{(m)}\|_{m}^2}.
\]
\end{enumerate}

\item
\begin{enumerate}

\item
\[
\delta_{m-\nu} x^{m-\nu+\frac{1}{2}}= \sum_{k=1}^\infty   l_k'
x^\frac{1}{2}\frac{J_\nu(\la_k)}{J_{\nu-m}(\la_k)}
\frac{J_{\nu-m}(x\la_k)}{2\|{\boldsymbol\vf}_{\nu,k}^{(m)}\|_{m}^2}-\bigl(x^{2p+m-\nu+\frac{1}{2}}
-x^{2q+m-\nu+\frac{1}{2}}\bigr),
\]
\item
\[
\delta_\nu x^{m-\nu+\frac{1}{2}}= \sum_{k=1}^\infty   l_k
x^\frac{1}{2}\frac{J_\nu(\la_k)}{J_{\nu-m}(\la_k)}\frac{J_{\nu-m}(x\la_k)}{2\|{\boldsymbol\vf}_{\nu,k}^{(m)}\|_{m}^2},
\]
\end{enumerate}
\end{enumerate}
En plus, on a convergence uniforme de ces sommes sur $[0,1]$. (Les constantes $l_k$ et $l_k'$ sont les m\^emes
qu'au th\'eor\`eme \ref{x22}.)
\end{theorem}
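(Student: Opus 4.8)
Le plan est de reprendre à l'identique la démarche de la preuve du théorème \ref{x22}, la seule différence étant que, pour $0\leq\nu\leq m$, chacune des quatre fonctions méromorphes $t_{+,\nu},t_{-,\nu},s_{+,\nu},s_{-,\nu}$ possède désormais un pôle simple en $w=0$, dont le résidu a été calculé en \ref{x19}. Je fixe donc $\nu\in\{0,1,\ldots,m\}$ et deux entiers $p,q\geq|\nu|$ avec $p\neq q$, puis je traite successivement les quatre fonctions. Pour $t_{+,\nu}$, je multiplie par $\bigl(t^{2p+\nu+1}-t^{2q+\nu+1}\bigr)x^{1/2}$ et j'intègre en $t$ sur $[0,1]$; le corollaire \ref{x21} ramène l'intégrale en $t$ à une combinaison finie de termes $J_{\nu+i+2}(w)/w^{i+2}$, et les estimations \ref{LLLLL} jointes à \ref{besselineqtriv} fournissent, exactement comme en \ref{w1}, une majoration de l'intégrale le long de la droite verticale d'abscisse $A_n$ par $C_\nu/A_n$, uniformément en $x\in[0,1]$.

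La nouveauté tient à ce que l'on applique la partie $2$ du théorème \ref{ttss} et non la partie $1$: l'identité de contour fait alors apparaître le terme supplémentaire $-\mathrm{Res}_0(t_{+,\nu})=-2\frac{(m-\nu+1)(\nu+1)}{m+2}\,t^\nu x^\nu$. Ce terme ne dépendant pas de $n$, son intégration en $t$ contre $\bigl(t^{2p+\nu+1}-t^{2q+\nu+1}\bigr)$ produit un multiple de $x^{\nu+1/2}$, à savoir $\delta_\nu:=2\frac{(m-\nu+1)(\nu+1)}{m+2}\bigl(\frac{1}{2p+2\nu+2}-\frac{1}{2q+2\nu+2}\bigr)$, qui est bien dans $\R^\ast$ puisque $(m-\nu+1)(\nu+1)\neq0$ pour $0\leq\nu\leq m$ et $p\neq q$. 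Parallèlement, la somme sur les zéros $\al$ de $J_\nu$ converge uniformément sur $[0,1]$ vers $x^{2p+\nu+1/2}-x^{2q+\nu+1/2}$ par la théorie de Fourier-Bessel classique (\cite[\S 18.26]{Watson} au voisinage de $x=1$, \cite[p.617]{Watson} sur tout $[0,1-\eta]$), comme en \ref{x22}. En faisant tendre $n$ vers l'infini et en utilisant que $C_\nu/A_n\to0$, on obtient
\[
\delta_\nu x^{\nu+1/2}=\sum_{k=1}^\infty l_k\,x^{1/2}\frac{J_\nu(x\la_k)}{2\|{\boldsymbol\vf}_{\nu,k}^{(m)}\|_m^2}-\bigl(x^{2p+\nu+1/2}-x^{2q+\nu+1/2}\bigr),
\]
avec $l_k=\int_0^1\bigl(t^{2p+\nu+1}-t^{2q+\nu+1}\bigr)J_\nu(t\la_k)\,dt$, soit la formule $1.(a)$.

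Pour les trois formules restantes, je répète le même schéma avec $t_{-,\nu}$, $s_{+,\nu}$ et $s_{-,\nu}$. Les fonctions $t_{-,\nu}$ et $s_{-,\nu}$ n'ont de pôles qu'aux $\la\in\mathcal{Z}_{m,\nu}$ et en $w=0$: aucune somme de Fourier-Bessel classique n'intervient, ce qui explique l'absence de terme en $x^{2p+\cdots}$ dans les formules $1.(b)$ et $2.(b)$, le membre de gauche se réduisant à la seule contribution du résidu en $0$ (respectivement en $x^{\nu+1/2}$ et $x^{m-\nu+1/2}$, d'après les parts $t^{m-\nu}x^\nu$ et $t^\nu x^{m-\nu}$ des résidus \ref{x19}). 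Pour $s_{+,\nu}$, les pôles supplémentaires sont les zéros $\gamma$ de $J_{\nu-m}$, qui produisent via \cite[\S 18.26]{Watson} le terme $x^{2p+m-\nu+1/2}-x^{2q+m-\nu+1/2}$ de la formule $2.(a)$. Les constantes $l_k$ et $l_k'$ sont dans tous les cas celles du théorème \ref{x22}, et la convergence uniforme sur $[0,1]$ se transpose sans modification.

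Le point le plus délicat n'est pas conceptuel: il s'agit de s'assurer que l'interversion des intégrations en $t$ et en $w$ demeure licite en présence du pôle en $0$ (ce qui résulte de l'intégrabilité absolue déjà établie dans la preuve de \ref{x22}) et que la majoration uniforme $C_\nu/A_n$ survit au contournement de l'origine par le demi-cercle $D_\eps$ — c'est précisément pour cela que l'on isole la contribution de $w=0$ comme résidu plutôt que de l'inclure dans l'estimation asymptotique. Comme les quatre résidus \ref{x19} partagent le même coefficient $2(m-\nu+1)(\nu+1)/(m+2)$, non nul pour $0\leq\nu\leq m$, les constantes $\delta_\nu$ et $\delta_{m-\nu}$ des quatre énoncés ne diffèrent que par l'intégrale en $t$ et appartiennent toutes à $\R^\ast$ dès que $p\neq q$.
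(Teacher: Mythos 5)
Votre démonstration suit exactement la même voie que celle de l'article : on applique la partie (2.) du théorème \ref{ttss} au lieu de la partie (1.), et le résidu en $w=0$ calculé en \ref{x19}, intégré contre $t^{2p+\nu+1}-t^{2q+\nu+1}$, fournit le terme supplémentaire $\delta_\nu x^{\nu+\frac{1}{2}}$, le reste de l'argument étant identique à celui du théorème \ref{x22}. Votre valeur explicite de $\delta_\nu$ diffère de celle de l'article d'un facteur $2$ (votre calcul de $\int_0^1 t^{2p+2\nu+1}dt=\frac{1}{2(p+\nu+1)}$ semble d'ailleurs correct), mais cela est sans incidence puisque l'énoncé n'exige que $\delta_\nu\in\R^\ast$, ce qui est garanti dès que $p\neq q$.
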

\begin{proof}

Soit  $ \nu\in \{ 0,1,\ldots,m \}$.  En utilisant (2.) du th\'eor\`eme \ref{ttss}, alors la preuve de ce th\'eor\`eme suit  le m\^eme raisonnement fait pour \'etablir  th\'eor\`eme
 \ref{x22}. On trouve:
{\allowdisplaybreaks
\begin{align*}
2\frac{(\nu+1)(m-\nu+1)(q-p)}{(m+2)(p+\nu+1)(q+\nu+1)} x^{\nu+\frac{1}{2}}&= \sum_{k=1}^\infty   l_k
x^\frac{1}{2}\frac{J_{\nu}(x\la_k)}{2\|{\boldsymbol\vf}_{\nu,k}^{(m)}\|_{m}^2}-\bigl(x^{2p+\nu+\frac{1}{2}}-x^{2q+\nu
+\frac{1}{2}}\bigr),\\
2\frac{(\nu+1)(m-\nu+1)(q-p)}{(m+2)(p+m-\nu+1)(q+m-\nu+1)} x^{\nu+\frac{1}{2}}&= \sum_{k=1}^\infty   l'_k
x^\frac{1}{2}\frac{J_{\nu}(x\la_k)}{2\|{\boldsymbol\vf}_{\nu,k}^{(m)}\|_{m}^2},\\
2\frac{(\nu+1)(m-\nu+1)(q-p)}{(m+2)(p+m-\nu+1)(q+m-\nu+1)} x^{m-\nu+\frac{1}{2}}&= \sum_{k=1}^\infty   l'_k
x^\frac{1}{2}\frac{J_\nu(\la_k)}{J_{\nu-m}(\la_k)}
\frac{J_{\nu-m}(x\la_k)}{2\|{\boldsymbol\vf}_{\nu,k}^{(m)}\|_{m}^2}-\bigl(x^{2p+m-\nu+\frac{1}{2}}
-x^{2q+m-\nu+\frac{1}{2}}\bigr),\\
2\frac{(\nu+1)(m-\nu+1)(q-p)}{(m+2)(p+\nu+1)(q+\nu+1)} x^{m-\nu+\frac{1}{2}}&= \sum_{k=1}^\infty   l_k
x^\frac{1}{2}\frac{J_\nu(\la_k)}{J_{\nu-m}(\la_k)}\frac{J_{\nu-m}(x\la_k)}{2\|{\boldsymbol\vf}_{\nu,k}^{(m)}\|_{m}^2},\\
\end{align*}
}
avec convergence uniforme  sur $[0,1]$. On pose $\delta_\nu=2\frac{(\nu+1)(m-\nu+1)(q-p)}{(m+2)(p+\nu+1)(q+\nu+1)},\; \forall\,\nu \in \{0,1,\ldots,m\}$.\\
\end{proof}

Maintenant, on se propose d'\'etablir le th\'eor\`eme \ref{x26}. Soit $\nu\in \Z$. Si
  $\nu\leq -1$ ou $\nu\geq m+1$ on pose  $\delta_\nu=0$, sinon $\delta_\nu$ est le r\'eel dans \ref{x23}.

Soit $f\in \mathcal{E}_m$. D'apr\`es \ref{x22} et \ref{x23}, nous avons montr\'e qu'il existe des constantes
r\'eelles
$\delta_\nu$ qui ne d\'ependent pas de $f$ v\'erifiant $\delta_\nu=0$ si $\nu\leq -1$ ou $\nu\geq m+1$ telles que:

D'apr\`es 1.(a) et 2.(b) du th\'eor\`eme \ref{x22} (resp. 1.(a) et 2.(b) du th\'eor\`eme
 \ref{x23}), on a pour tout $\nu\leq -1$ ou $\nu\geq m+1$ (resp. pour tout  $\nu\in \{0,1,\ldots,m\}$):
\begin{equation}\label{q5}
\begin{split}
\delta_\nu \int_0^1 x^{\nu+1}f(x)dx&= \sum_{k=1}^\infty   l_k \frac{\int_0^1 f(x)J_{\nu}(x\la_k)xdx}{2\|{\boldsymbol\vf}_{\nu,k}^{(m)}\|_{m}^2}-\int_0^1\bigl(x^{2p+\nu+1}-x^{2q+\nu+1}\bigr)f(x)dx,\\
\delta_\nu \int_0^1 x^{2m-\nu+1}f(\frac{1}{x})dx&= \sum_{k=1}^\infty   l_k \frac{J_\nu(\la_k)}{J_{\nu-m}(\la_k)}\frac{\int_0^1 x^mf(\frac{1}{x})J_{\nu-m}(x\la_k)xdx}{2\|{\boldsymbol\vf}_{\nu,k}^{(m)}\|_{m}^2},
\end{split}
\end{equation}
D'apr\`es 1.(b) et 2.(a) du th\'eor\`eme \ref{x22} (resp. 1.(b) et 2.(a) du th\'eor\`eme
 \ref{x23}), on a pour tout $\nu\leq -1$ ou $\nu\geq m+1$ (resp. pour tout  $\nu\in \{0,1,\ldots,m\}$):

\begin{equation}\label{q6}
\begin{split}
\delta_{m-\nu}\int_0^1 x^{2m-\nu+1}f(\frac{1}{x})dx&= \sum_{k=1}^\infty   l'_k \frac{J_\nu(\la_k)}{J_{\nu-m}(\la_k)}\frac{\int_0^1 x^m f(\frac{1}{x})J_{\nu-m}(x\la_k)xdx}{2\|{\boldsymbol\vf}_{\nu,k}^{(m)}\|_{m}^2}-\int_0^1\bigl(x^{2p+2m-\nu+1}
-x^{2q+2m-\nu+1}\bigr)f(\frac{1}{x})dx,\\
\delta_{m-\nu} \int_0^1 x^{\nu+1}f(x)dx&= \sum_{k=1}^\infty   l'_k \frac{\int_0^1 f(x)J_{\nu}(x\la_k)xdx}{2\|{\boldsymbol\vf}_{\nu,k}^{(m)}\|_{m}^2},
\end{split}
\end{equation}
(Rappelons que si $\nu\in \{0,1,\ldots,m\}$, on a
$(f ,x^\nu)_m= \int_0^1 x^{\nu+1}f(x)dx+\int_0^1 x^{2m-\nu+1}f(x)dx$. Par contre lorsque
 $\nu\leq -1$ ou $\geq m+1$,
on  fait la convention suivante $(f,x^\nu)_m:=0$).
En additionnant les deux \'equations du \ref{q5} (resp \ref{q6}), on obtient:

\begin{align*}
\delta_\nu(f,x^\nu)_m&= \sum_{k=1}^\infty   l_k \frac{ (f
,{\boldsymbol\vf}_{\nu,k}^{(m)})_{m}}{2\|{\boldsymbol\vf}_{\nu,k}^{(m)}\|_{m}^2}-\int_0^1\bigl(x^{2p+\nu+1}
-x^{2q+\nu+1}\bigr)f(x)dx,\\
\delta_{\nu-m}(f,x^\nu)_m&=\sum_{k=1}^\infty   l'_k
\frac{(f,{\boldsymbol\vf}_{\nu,k}^{(m)})_{m}}{2\|{\boldsymbol\vf}_{\nu,k}^{(m)}\|_{m}^2}
-\int_0^1\bigl(x^{2p+2m-\nu+1}-x^{2q+2m-\nu+1}\bigr)f(\frac{1}{x})dx.
\end{align*}
Ce termine la preuve du th\'eor\`eme \ref{x26}.

\section{Annexe}

On rappelle le r\'esultat suivant:
\begin{proposition}\label{Green}$($Formule de Green$)$ Soit $X$ une vari\'et\'e complexe de dimension $d$ et $A\subset X$ un ouvert relativement compact tel que $\overline{A}$ soit une sous-vari\'et\'e r\'eelle \`a coins de $X$. Soient $f$ et $g$ deux formes diff\'erentielles de classes $\mathcal{C}^2$ au voisinage de $\overline{A} $ de bidegr\'es $(p,p)$ et $(q,q)$ telles que $p+q=d-1$. On a:
{{}
\[
 \int_{A}(fdd^c g-g dd^c f)=\int_{\partial A} (fd^cg-g d^cf)
\]}
\end{proposition}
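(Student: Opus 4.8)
The plan is to reduce the identity to Stokes' theorem applied to the $(2d-1)$-form $\eta:=f\,d^cg-g\,d^cf$ on $A$, the entire content being a pointwise cancellation of the first-order cross terms. First I would record the degrees: since $f$ has bidegree $(p,p)$ and $g$ has bidegree $(q,q)$ with $p+q=d-1$, the products $f\wedge dd^cg$ and $g\wedge dd^cf$ have total degree $2d$ and are thus top forms, so the left-hand side is well defined, while $\eta$ has degree $2d-1$ and restricts to a form of the correct dimension on $\partial A$. The hypothesis that $f,g$ are of class $\mathcal{C}^2$ is exactly what makes $\eta$ of class $\mathcal{C}^1$, so that Stokes applies; and although $\overline{A}$ carries corners, Stokes' theorem remains valid in that setting, the corner locus being of codimension $\geq 2$ in $\partial A$.

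Next I would compute $d\eta$ by the graded Leibniz rule. Because $f$ and $g$ have even degree, the signs are trivial, giving
\[
d(f\,d^cg)=df\wedge d^cg+f\wedge dd^cg,\qquad d(g\,d^cf)=dg\wedge d^cf+g\wedge dd^cf,
\]
so that
\[
d\eta=\bigl(df\wedge d^cg-dg\wedge d^cf\bigr)+\bigl(f\wedge dd^cg-g\wedge dd^cf\bigr).
\]
The crux is then to show that the first parenthesis vanishes identically, that is, $df\wedge d^cg=dg\wedge d^cf$.

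For this central step I would decompose $d=\partial+\overline{\partial}$ and $d^c=\frac{i}{4\pi}(\overline{\partial}-\partial)$ and invoke the bidegree constraint. Expanding $df\wedge d^cg$ produces four terms; two of them, $\partial f\wedge\partial g$ and $\overline{\partial}f\wedge\overline{\partial}g$, have respective bidegrees $(d+1,d-1)$ and $(d-1,d+1)$ and hence vanish, leaving
\[
df\wedge d^cg=\frac{i}{4\pi}\bigl(\partial f\wedge\overline{\partial}g-\overline{\partial}f\wedge\partial g\bigr).
\]
Since $\partial f,\overline{\partial}g,\overline{\partial}f,\partial g$ are all of odd total degree, commuting two of them through the wedge yields a minus sign, so $\partial f\wedge\overline{\partial}g=-\overline{\partial}g\wedge\partial f$ and $\overline{\partial}f\wedge\partial g=-\partial g\wedge\overline{\partial}f$; substituting reproduces exactly the analogous expansion of $dg\wedge d^cf$. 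This establishes the cancellation $df\wedge d^cg=dg\wedge d^cf$.

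Finally, combining the cancellation with the Leibniz computation gives $d\eta=f\wedge dd^cg-g\wedge dd^cf$, and Stokes' theorem yields
\[
\int_{\partial A}(f\,d^cg-g\,d^cf)=\int_A d\eta=\int_A\bigl(f\,dd^cg-g\,dd^cf\bigr),
\]
which is the assertion. The only genuine subtlety is the bidegree bookkeeping in the central step; everything else is formal. I note in passing that the precise normalization of $d^c$ is irrelevant, since the cancellation holds for any $d^c=c(\overline{\partial}-\partial)$, so that no convention beyond $dd^c$ being a real $(1,1)$-operator enters the argument.
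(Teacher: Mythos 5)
Your proof is correct and complete. The paper itself does not prove this proposition: it simply refers to Demailly's book, so there is no internal argument to compare against. Your derivation is the standard one that such references contain: the bidegree count killing $\partial f\wedge\partial g$ (bidegree $(d+1,d-1)$) and $\overline{\partial}f\wedge\overline{\partial}g$ (bidegree $(d-1,d+1)$), the odd-degree commutation giving $df\wedge d^cg=dg\wedge d^cf$, and Stokes for manifolds with corners applied to $\eta=f\,d^cg-g\,d^cf$ are all handled accurately, and your remark that the normalization of $d^c$ is immaterial is right. The only point worth stating slightly more carefully is the invocation of Stokes on a domain with corners, which you correctly flag and which is indeed standard since the corner locus has measure zero in $\partial A$; in short, your write-up supplies the proof the paper outsources.
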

\begin{proof}
 Voir par exemple \cite{DemaillyLivre}.
\end{proof}

On rappelle qu'un ensemble $\{ \vf_n\}_{n\in \N}$ de vecteurs orthogonaux deux \`a deux dans $\mathcal{H}$, un espace hilbertien, est dit total si et seulement si l'ensemble $S$ form\'e des combinaisons lin\'eaires finies $\sum_{k=1}^n a_k \vf_k$ des $\vf_k$ est dense dans $\mathcal{H}$.
\begin{proposition}\label{base}
 Une suite orthogonale $(\phi_j)_{j\in \N}$ de vecteurs d'un espace de Hilbert $H$ est  totale si et seulement si
le vecteur nul est l'unique vecteur orthogonal \`a tous les $\phi_j$.
\end{proposition}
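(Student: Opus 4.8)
Le plan est de ramener cet \'enonc\'e, purement d'analyse fonctionnelle, au th\'eor\`eme de projection orthogonale dans un espace de Hilbert. Je poserais $V:=\overline{S}$, l'adh\'erence du sous-espace $S$ des combinaisons lin\'eaires finies des $\phi_j$; par d\'efinition, dire que la suite $(\phi_j)_{j\in\N}$ est totale revient exactement \`a dire que $V=H$. La premi\`ere \'etape, et la seule qui demande un argument, consiste \`a identifier l'orthogonal de la famille avec celui de $V$. Si un vecteur $x$ v\'erifie $(x,\phi_j)=0$ pour tout $j$, alors par lin\'earit\'e du produit scalaire $x$ est orthogonal \`a tout \'el\'ement de $S$, puis par continuit\'e du produit scalaire il est orthogonal \`a tout \'el\'ement de $\overline{S}=V$. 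La r\'eciproque \'etant imm\'ediate (chaque $\phi_j$ appartient \`a $V$), on obtient l'\'egalit\'e $\{\phi_j\mid j\in\N\}^\perp=V^\perp$.

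J'invoquerais ensuite le th\'eor\`eme de d\'ecomposition orthogonale: puisque $V$ est un sous-espace \emph{ferm\'e} de l'espace de Hilbert $H$, on dispose de $H=V\oplus V^\perp$. Les deux implications en d\'ecoulent alors directement. Pour le sens direct, si la famille est totale, alors $V=H$, donc tout vecteur orthogonal aux $\phi_j$ appartient \`a $V^\perp=H^\perp=\{0\}$ et est nul. Pour la r\'eciproque, si $0$ est le seul vecteur orthogonal \`a tous les $\phi_j$, alors $\{\phi_j\mid j\in\N\}^\perp=\{0\}$, d'o\`u $V^\perp=\{0\}$ par l'\'etape pr\'ec\'edente, et la d\'ecomposition $H=V\oplus V^\perp$ donne $H=V$; la famille est donc totale.

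L'argument ne comporte aucune difficult\'e r\'eelle: le seul ingr\'edient non formel est le th\'eor\`eme de projection (ou, de mani\`ere \'equivalente, la d\'ecomposition $H=V\oplus V^\perp$ pour $V$ ferm\'e), qui repose lui-m\^eme sur la compl\'etude de $H$. Le point \`a ne pas n\'egliger est le caract\`ere \emph{ferm\'e} de $V$, obtenu pr\'ecis\'ement en prenant l'adh\'erence de $S$: sans passer \`a la cl\^oture, ni l'\'egalit\'e $\{\phi_j\}^\perp=V^\perp$ ni l'application du th\'eor\`eme de projection ne seraient licites. C'est aussi la raison pour laquelle la continuit\'e du produit scalaire est essentielle, car elle permet de transf\'erer l'orthogonalit\'e de $S$ \`a son adh\'erence.
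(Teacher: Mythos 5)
Votre preuve est correcte et compl\`ete. Notez que le texte ne donne en fait aucune d\'emonstration pour cette proposition~: il renvoie simplement au th\'eor\`eme 12, p.~366 de Birkhoff. Votre argument --- passage \`a l'adh\'erence $V=\overline{S}$, identification $\{\phi_j\mid j\in\N\}^\perp=V^\perp$ par lin\'earit\'e et continuit\'e du produit scalaire, puis d\'ecomposition orthogonale $H=V\oplus V^\perp$ pour le sous-espace ferm\'e $V$ --- est pr\'ecis\'ement la d\'emonstration standard que contiendrait une telle r\'ef\'erence, et vous soulignez \`a juste titre le seul point non formel, \`a savoir la fermeture de $V$ qui l\'egitime l'application du th\'eor\`eme de projection.
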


\begin{proof}
 Voir par exemple \cite[th\'eor\`eme 12, p.366]{Birkhoff}.
\end{proof}

La proposition suivante donne une description d'une extension maximale autoadjointe associ\'ee \`a un op\'erateur positif admettant
une famille de vecteurs propres formant un syst\`eme total.
\begin{lemma}\label{extensionMaximale}

Soit $H$ un espace de Hilbert avec produit hermitien not\'e $(\cdot,\cdot)$.

Soit $\Delta: D\subset H\lra H$ un op\'erateur lin\'eaire, o\`u $D$ est un sous-espace lin\'eaire dense de $H$. Soit
$(\phi_k)_{k\in\N}$ une base orthonormale de $H$, et on suppose qu'il existe $0\leq \la_0\leq \la_1\leq \ldots$
une suite croissante de r\'eels positifs tels que:
\[
\bigl(\phi_k,\Delta\psi \bigr)=\la_k(\phi_k,\psi)\quad \forall\, \psi\in D,\;\forall\, k\in \N.
\]

On pose $H_2:=\bigl\{\psi=\sum_{k=0}^\infty a_k\phi_k\in H\bigl| a_k\in \C,\; \forall k\in \C, \sum_{k=0}^\infty \la_k^2|a_k|^2<\infty \bigr\}$ et soit $Q$ l'op\'erateur lin\'eaire d\'efini sur $H_2$ en posant:
\[
Q(\psi)=\sum_{k=0}^\infty \la_k a_k\phi_k,
\]
pour $\psi=\sum_{k=0}^\infty  a_k\phi_k\in H_2$. \\

Si $D\subset H_2$, alors $Q$ est une extension maximale autoadjointe de $\Delta$ dans $H_2$.
\end{lemma}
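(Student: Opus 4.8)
The plan is to recognize $Q$ as the multiplication operator diagonalized by the Hilbert basis $(\phi_k)$ — for such an operator self-adjointness is classical — and then to check that the weak eigenvalue hypothesis together with $D\subset H_2$ forces $Q$ to extend $\Delta$. First I would record that $Q$ is well defined on a dense domain. The space $H_2$ contains every finite linear combination of the $\phi_k$: for such a $\psi$ only finitely many Fourier coefficients $a_k$ are nonzero, so $\sum_k \la_k^2|a_k|^2<\infty$; and finite combinations are dense in $H$ since $(\phi_k)$ is a Hilbert basis, whence $H_2$ is dense. Moreover, for $\psi=\sum_k a_k\phi_k\in H_2$ one has $\|Q\psi\|^2=\sum_k \la_k^2|a_k|^2<\infty$, so $Q\psi\in H$ and $Q$ is genuinely an operator $H_2\lra H$.

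Next I would prove that $Q$ extends $\Delta$. Fix $\psi\in D$; by hypothesis $\psi\in H_2$, so write $\psi=\sum_k a_k\phi_k$ with $\sum_k\la_k^2|a_k|^2<\infty$, where $a_k$ is the $k$-th Fourier coefficient of $\psi$. The eigenvalue relation $(\phi_k,\Delta\psi)=\la_k(\phi_k,\psi)$ identifies the $k$-th Fourier coefficient of $\Delta\psi\in H$ with $\la_k a_k$, the reality of $\la_k$ absorbing the conjugation; this is exactly the $k$-th coefficient of $Q\psi$. Since two elements of $H$ with the same Fourier coefficients in a Hilbert basis coincide, $\Delta\psi=Q\psi$, so $Q|_D=\Delta$.

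The heart of the argument is the self-adjointness of $Q$. Symmetry is immediate: for $\psi=\sum_k a_k\phi_k$ and $\chi=\sum_k b_k\phi_k$ in $H_2$ the two quantities $(Q\psi,\chi)$ and $(\psi,Q\chi)$ both reduce to $\sum_k \la_k a_k\overline{b_k}$, the series being absolutely convergent by Cauchy--Schwarz (using $\psi\in H_2$, $\chi\in H$ and $\la_k\in\R$). Hence $Q\subseteq Q^\ast$, and it remains to establish the reverse inclusion $\mathrm{Dom}(Q^\ast)\subseteq H_2$. Take $\chi\in\mathrm{Dom}(Q^\ast)$ with $Q^\ast\chi=\eta\in H$, so $(Q\psi,\chi)=(\psi,\eta)$ for all $\psi\in H_2$. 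Testing against $\psi=\phi_k\in H_2$ gives that the $k$-th Fourier coefficient $d_k$ of $\eta$ equals $\la_k$ times the $k$-th coefficient $c_k$ of $\chi$. Since $\eta\in H$, one has $\sum_k\la_k^2|c_k|^2=\sum_k|d_k|^2=\|\eta\|^2<\infty$, i.e. $\chi\in H_2=\mathrm{Dom}(Q)$. Therefore $Q=Q^\ast$.

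Finally, maximality is automatic from self-adjointness: if $T\supseteq Q$ is any symmetric extension then $T\subseteq T^\ast\subseteq Q^\ast=Q\subseteq T$, forcing $T=Q$, so $Q$ is maximal symmetric and thus the maximal self-adjoint extension of $\Delta$ inside $H_2$. I expect every step except one to be routine bookkeeping with Fourier coefficients; the genuine obstacle is the inclusion $\mathrm{Dom}(Q^\ast)\subseteq H_2$, which is precisely where the square-summability condition defining $H_2$ is needed and where the reality of the $\la_k$ is essential.
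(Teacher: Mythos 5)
Your proof is correct and follows essentially the same Fourier-coefficient bookkeeping as the paper: the extension property via $(\phi_k,\Delta\psi)=\la_k(\phi_k,\psi)$, symmetry via the diagonal pairing, and the key computation that testing against the $\phi_k$ forces any vector in the adjoint's domain (respectively, in the domain of a self-adjoint extension) to satisfy $\sum_k\la_k^2|a_k|^2<\infty$. If anything, your organization is slightly tighter than the paper's: you establish $\mathrm{Dom}(Q^\ast)=H_2$ and hence genuine self-adjointness before deducing maximality, whereas the paper asserts self-adjointness after checking only symmetry and then runs the same domain computation separately for an arbitrary self-adjoint extension.
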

\begin{proof}

V\'erifions d'abord que
$Q$ est une extension de $\Delta$, soit $\psi\in D$ et montrons que $Q(\psi)=\Delta(\psi)$. Il existe deux suites de nombres
complexes $(a_k)_{k\in \N}$ et $(b_k)_{k\in \N}$ telles que $\psi=\sum_{k=0}^\infty a_k \phi_k$ et $\Delta\psi=\sum_{k=0}^\infty
b_k \phi_k$. Donc, $\overline{b_k}=(\phi_k,\Delta\psi)=\la_k(\phi_k,\psi)=\la_k\overline{a_k}$, par suite
$\Delta\psi=\sum_{k=0}^\infty \la_k a_k \psi_k=Q(\psi)$.

\begin{align*}
(Q(\psi),\psi')=\sum_{k=0}^\infty \la_k a_k\overline{a}_k'= (\psi,Q(\psi')), \quad \forall\,\psi,\psi'\in H_2.
\end{align*}
donc $Q$ est autoadjoint.

Maintenant, soit $T:D'\subset H\lra H$ est  un op\'erateur
lin\'eaire autoadjoint qui \'etend $Q$ (c'est \`a dire $H_2\subset D'$ et $T_{|D}=\Delta$). Soit $\psi\in D'$, il existe deux suites de
nombres complexes $(a_k)_{k\in _N}$ et $(b_k)_{k\in _N}$ telles que $\psi=\sum_{k=0}^\infty a_k\phi_k $ et $T\psi=\sum_{k\in \N}b_k
\phi_k$. Comme $T$ est autoadjoint, alors $(Tw,\psi)=(w,T\psi)$ pour tout $w\in D'$. En particulier,
$(T\psi_k,\psi)=(\psi_k,T\psi)$ pour tout $k\in \N$. On en tire que $\la_k \overline{a_k}=\overline{b_k}$. Par suite,
$\sum_{k\in\N}\la_k^2|a_k|^2=\sum_{k\in \N}|b_k|^2<\infty$, donc $\psi\in H_2$. On a donc
montr\'e que $D'=H_2$.

\end{proof}

 \subsection{Rappels sur la th\'eorie des fonctions de Bessel}\label{paragraphe}


Quelques propri\'et\'es de fonctions de Bessel utilis\'ees dans cet article. Dans cette introduction \`a la th\'eorie des fonctions de Bessel, la principale r\'ef\'erence sera le chapitre 17 du \cite{Whittaker}, on s'int\'eressera \`a une sous-classe de fonctions de Bessel \`a savoir les fonctions de Bessel d'ordre un entier.

Pour tout $z\in \C$ fix\'e, la fonction:
{{} \[
t\mapsto e^{\frac{1}{2}z(t-\frac{1}{t})},\quad t\neq 0,
\]}

admet un d\'eveloppement en  s\'erie de Laurent en $t$. Soit $n\in \Z$. Par d\'efinition, la fonction de Bessel d'ordre $n$ est la fonction qui \`a tout $z\in \C$, associe  le coefficient de $t^n$ dans ce d\'eveloppement, on le note par $J_n(z)$.
\'enonçons quelques propri\'et\'es de la fonction $J_n$:
\begin{itemize}
\item[$\bullet$] $J_n$ est une fonction analytique sur $\C$ telle que $J_{-n}=(-1)^nJ_n$ et
{{} \begin{equation}\label{ordreenzero}
J_n(z)=\sum_{r=0}^\infty \frac{(-1)^r(\frac{1}{2}z)^{n+2r}}{\Gamma(r+1)\Gamma(n+r+1)}, \quad\text{si}\quad n\in \N
\end{equation}}
\item[$\bullet$] $J_n$ est une solution de l'\'equation diff\'erentielle lin\'eaire suivante:
{{} \begin{equation}\label{besselequation}
\frac{d^2 y}{d^2 z}+\frac{1}{z}\frac{dy}{dz}+\bigl(1-\frac{n^2}{z^2} \bigr)y=0
\end{equation}}

On a les relations de r\'ecurrence suivantes:

\item[$\bullet$]
\begin{equation}\label{B1}
\frac{d}{dz}(z^{-n}J_n(z))=-z^{-n}J_{n+1}(z).
\end{equation}

\item[$\bullet$]
\begin{equation}\label{B5}
\frac{d}{dz}(z^n J_n(z))=z^n J_{n-1}(z)
\end{equation}

\item[$\bullet$] Les z\'eros de $J_n$ sont r\'eels.\\

\item[$\bullet$] Pour tous $a\neq b$, on a:
\begin{equation}\label{intTTT}
\begin{split}
\int_0^1xJ_n(ax)J_n(bx)dx&=\frac{1}{b^2-a^2}\Bigl(aJ_n(b)J_n'(a)-bJ_n(a)J_n'(b)  \Bigr),
\end{split}
\end{equation}
et
\begin{equation}\label{encoreeq}
\int_0^1x J_n(ax)^2dx=\frac{1}{2}\bigl(J_n'(a)^2+(1-\frac{n^2}{a^2})J_n(a)^2  \bigr).
\end{equation}
voir par exemple \cite[p. 381, 18.]{Whittaker}
\end{itemize}

Soient $Z_m=\{j_{m,1}<j_{m,2}<\ldots\}$ l'ensemble des  z\'eros positifs non nuls de $J_m$ ordonn\'es par ordre croissant, voir par exemple \cite[\S 15]{Watson}.  Soient $D_m:=\{c_{m,1}<c_{m,2}<\ldots\}$ les z\'eros positifs non nuls de $J'_m$ ordonn\'es par ordre croissant. Comme les z\'eros de $J_m$ sont simples, voir par exemple \cite[\S 15.21]{Watson}, alors
{{}
{{}
\begin{equation}\label{z\'eros}
 Z_m\cap D_m=\emptyset.
\end{equation}}}

\bibliographystyle{plain}
\bibliography{biblio}

\vspace{1cm}

\begin{center}
{\sffamily \noindent National Center for Theoretical Sciences, (Taipei Office)\\
 National Taiwan University, Taipei 106, Taiwan}\\

 {e-mail}: {hajli@math.jussieu.fr}

\end{center}

\end{document}